\newtheorem{theorem}{Theorem}[section]
\newtheorem{prop}[theorem]{Proposition}
\newtheorem{lemma}[theorem]{Lemma}
\newtheorem{corollary}[theorem]{Corollary}
\newtheorem{definition}[theorem]{Definition}
\newtheorem{remark}[theorem]{Remark}
\newtheorem{example}[theorem]{Example}
\newcommand{\cM}{\mathcal{M}}
\newcommand{\R}{{\mathbb R}}
\newcommand{\N}{{\mathbb N}}
\newcommand{\E}{{\mathbb E}}
\newcommand{\cF}{{\mathcal F}}
\newcommand{\cL}{{\mathcal L}}
\newcommand{\bfx}{{\mathbf x}}
\newcommand{\bfX}{{\mathbf X}}
\newcommand{\scrC}{{\mathscr C}}
\renewcommand{\P}{{\mathbb P}}
\newcommand{\Cpol}[1]{{\mathcal C}^\infty_{\textup{pol}}(#1)}
\def\EKG{\mathscr{E}_K^G}
\def\blue{\textcolor{black}}
\title[Nonnegativity preserving convolution kernels. Application to SVEs]{Nonnegativity preserving convolution kernels. Application to Stochastic Volterra Equations in closed convex domains and their approximation.}
\date{\today}
\author{Aurélien Alfonsi}
\address{Aurélien Alfonsi, CERMICS, Ecole des Ponts, Marne-la-Vall\'ee, France. MathRisk, Inria, Paris, France.}
\email{aurelien.alfonsi@enpc.fr}
\thanks{This work benefited from the support of the ``chaire Risques financiers'', Fondation du Risque. \\
I thank the anonymous referees for their  stimulating suggestions on the first version of this paper. }
\subjclass[2010]{44A35 45D05 60H35 91G60}
\keywords{ Completely monotone kernels, Stochastic Volterra Equations, Volterra Equations, Hawkes processes with inhibition, rough Heston model}
\begin{document}
\maketitle

\begin{abstract}
This work defines and studies \blue{one-dimensional} convolution kernels that preserve nonnegativity. When the past dynamics of a process is integrated with a convolution kernel like in Stochastic Volterra Equations or in the jump intensity of Hawkes processes, this property allows to get the nonnegativity of the integral. We give characterizations of these kernels and show in particular that completely monotone kernels preserve nonnegativity. We then apply these results to analyze the stochastic invariance of a closed convex set by Stochastic Volterra Equations. We also get a comparison result in dimension one. Last, when the kernel is a positive linear combination of decaying exponential functions, we present a second order approximation scheme for the weak error that stays in the closed convex domain under suitable assumptions. We apply these results to the rough Heston model and give numerical illustrations. 
\end{abstract}

\section{Introduction}

Convolution kernels are widely used in applications to model phenomena with memory that fade away along the time. Without being exhaustive at all, we mention here applications to mechanics~\cite{CoNo,MacCamy}, biology~\cite{DLP,RBSc,PNAS}, sociology~\cite{CrSo,BBH} or finance~\cite{Gatheral,ASS,BDHM,EER}. The main families of stochastic processes used in the related models are Hawkes processes and Stochastic Volterra Equations, where the convolution kernel is used to integrate the past dynamics. For Hawkes processes, this integration on the past produces the jump intensity and has thus to be nonnegative.  For Stochastic Volterra Equations, it is also important for some applications to know if they are nonnegative or more generally if they stay in some domain. 

We first motivate the introduction of nonnegativity preserving convolution kernels. Let us consider a kernel function $G:\R_+\to \R_+$ and an increasing sequence of times $(t_k)_{k\ge 1}$ such that $t_1\ge 0$ and $t_k\to \infty$ as $k\to \infty$. For a sequence of real numbers $(x_k)_{k\ge 1}$, we consider the process 
$$ X_t= \sum_{k: t_k\le t}G(t-t_k) x_k.$$
This models a scalar phenomenon with memory: at each time~$t_k$, the process is increased by~$x_k$, and the memory of this is then scaled in the future by the kernel~$G$. The function~$G$ is typically nonincreasing for a fading memory, which is the most common case in applications. We are interested in the following question: under which condition do we have $X_t\ge 0$ for all $t\ge 0$? This is clearly true if all the $x_k$'s are nonnegative. It is also clear that a necessary condition is to have  $X_{t_K}= \sum_{k=1}^K G(t_K-t_k) x_k\ge 0$ for all $K\ge 1$. In this paper, we are interested in characterizing kernels~$G$ for which this condition is sufficient to guarantee that $X_t\ge 0$ for all $t\ge 0$. We say that these kernels preserve nonnegativity (see Definition~\ref{def_PPKernels}). The main results are the following. \blue{First,} completely monotone kernels, that are widely used in applications, preserve nonnegativity (Theorem~\ref{thm:positivite}). \blue{Perhaps surprisingly, this property does not seem to have been pointed in the literature on completely monotone functions, while it is related to old problems in mathematics, see Remarks~\ref{rk_Schur} and~\ref{rk_Bartholomew} below.} Second, we fully characterize nonnegativity preserving kernels in Theorem~\ref{thm_char_pos} and give in Theorem~\ref{thm_carac_np} a more precise characterization for nonincreasing right-continuous kernels by the mean of their resolvent of the first kind. A remarkable property with càdlàg positivity preserving kernels is the following. Even if~$X$ is a process with memory, there is no need to know all the past to determine at time $t_k$ what values of $x_k$ guarantee the nonnegativity of~$X$ in the future, provided that $X_t\ge 0$ for $t\in [0,t_k)$. It is necessary and sufficient to have $G(0)x_k+X_{t_k-}\ge 0$, and therefore only the present information on~$X$ is needed. \blue{Let us note that this kind of property already show through in the works of Abi Jaber et al.~\cite{AJEE,AJLP,AJ_Bernoulli} for Stochastic Volterra Equations with a nonincreasing continuous convolution kernel having a nonnegative and nonincreasing resolvent of the first kind. However, this property appears in this case from a contraposition while the nonnegativity preserving property allows for a direct implication.  }

A potential important application of this result \blue{on nonnegativity} brings on processes of Hawkes type with inhibition. Hawkes processes with inhibition have been recently studied by Costa et al.~\cite{CGMT}, Chen et al.~\cite{CSSBW}, Bonde Raad and L\"ocherbach~\cite{BRL}, Cattiaux et al.~\cite{CCC} and Duval et al.~\cite{DLP}. To fix the ideas, let us consider a marked jump process $(N_t,t\ge 0)$ that is right-continuous, piecewise constant, starts from $N_0\in \R$ at time~$0$ and has the jump intensity 
$$ X_t=x_0 + \int_0^t G(t-s) dN_s, \ t\ge 0,$$
where $x_0>0$ and $G$ is a càdlàg nonincreasing kernel that preserves nonnegativity. In most of the literature of Hawkes processes, the kernel $G$ and the jumps of~$N$ are assumed to be nonnegative so that the process is self-exciting. To include an inhibition behaviour, one has to take a signed kernel as in~\cite{CGMT} or to allow negative jumps, but there is then no guarantee to have a nonnegative intensity. Thus, it is assumed in~\cite{CGMT} that the intensity of~$N$ is given by $(X_t)^+$ or more generally by $\phi(X_t)$, where $\phi: \R\to \R_+$ is a nondecreasing function. The nonnegativity preserving property of~$G$  
allows to get the nonnegativity "naturally", without the use of such a function. Namely, by Proposition~\ref{prop_pos_gen} below, we have $X_t\ge 0$ for all $t\ge 0$ if, and only if 
$$X_{t-}+G(0)dN_t \ge 0,$$ 
at each jump of~$N$. While the process $(N,X)$ is a priori not Markovian, this condition does not involve $(N_s,X_s)_{s\in[0,t)}$ but only $X_{t-}$. We can imagine different choices of models for the conditional law of $dN_t$ given $X_{t-}$ and we leave this for further investigations. Extensions to multidimensional Hawkes processes with excitation and inhibition (self and mutual) could also be developed. Last, let us note that the property mentioned can be useful for stochastic control problems, for example if we consider an impulse control on the jump intensity.
All these investigations are far beyond the scope of the present paper. Except few remarks (namely Remarks~\ref{rk_Hawkes1} and~\ref{rk_Hawkes2}), the paper mostly focuses on some applications to  Stochastic Volterra Equations that we present now. 

Stochastic Volterra Equations (SVE) have been introduced by Berger and Mizel~\cite{BeMi}. These processes extend naturally Stochastic Differential Equations and allow to take into account memory in their dynamics. In this work, we only focus on these equations with a \blue{real valued} convolution kernel~$G:\R_+ \to \R_+$ and consider the process $X$ defined by
\begin{equation}\label{intro_to_SVE} X_t=x_0 + \int_0^t G(t-s) b(X_s)ds +\int_0^t G(t-s) \sigma (X_s) dW_s,
\end{equation}
where $x_0 \in \R^d$, $b:\R^d \to \R^d$ and $\sigma: \R^d \to \mathcal{M}_d(\R)$ are respectively vector and matrix valued functions, and $W$ is a $d$-dimensional Brownian motion. Under suitable conditions on the kernel and the coefficients $b$ and $\sigma$, there is indeed a unique solution to this equation, see~\cite[Theorem 3.A]{BeMi}. Stochastic Volterra Equations of this type have known a growing interest in financial applications to model the volatility process since the seminal work by Gatheral et al. on rough volatility~\cite{GJR}. An important example is the rough Heston model proposed by El~Euch and Rosenbaum~\cite{EER}, where the volatility process follows a one-dimensional SVE~\eqref{intro_to_SVE} with  an affine function $b$, a square-root function $\sigma$ and a fractional kernel~$G$. It is shown that this process, called rough Cox-Ingersoll-Ross process, remains nonnegative. For this and for other potential applications, it is important to determine that the process $X$ stays in some domain. While the stochastic invariance is well studied for Stochastic Differential Equations (see Abi Jaber et al.~\cite{AJBI} and references therein), there exist up to our knowledge very few works in the literature on similar results for SVEs. We mention here the work by Abi Jaber et al.~\cite{AJLP} that obtains the existence of a weak solution to~\eqref{intro_to_SVE} on $\R_+^d$ under suitable assumptions on $b$ and $\sigma$, and by assuming that the kernel~$G$ is nonincreasing, continuous with a nonnegative and nonincreasing resolvent of the first kind. \blue{Abi Jaber~\cite{AJ_Bernoulli} shows also the nonnegativity of SVEs with jumps under the same condition on the kernel.}  In the second part of this work, we give sufficient conditions ensuring that the SVE stay in some domain~$\scrC \subset \R^d$. Basically, when $\scrC$ is a closed convex set and the kernel $G$ preserves nonnegativity, the conditions on $b$ and $\sigma$ are the same as those required for SDEs (see Theorem~\ref{thm_SVE_dom}). We obtain this result by using an approximation of the SVE that consists in splitting the convolution part and the SDE part of the dynamics.  We also get these results for completely monotone kernels such that $G(0+)=+\infty$, which includes fractional kernels. Besides, we obtain in dimension one a comparison result between SVEs (Theorem~\ref{thm_comparison_result}). \blue{These results are obtained under the standard assumption of Lipschitz coefficients. However, the same approximation procedure could also be used to get stochastic invariance results for weak solutions of SVEs. This is left for further research (see Remark~\ref{rk_Wishart} below). }

The last part of this work focuses on the approximation of SVEs. More precisely, we are interested in developing approximation schemes that are well defined on~$\scrC$ for SVEs that stays in this domain. By using splitting technique, we introduce in Theorem~\ref{thm_second_order_SVE} such a scheme that leads to a weak approximation error of order~2, when $G(t)=\sum_{i=1}^n\gamma_i e^{-\rho_i t}$ with $n\ge 1$, $\gamma_1,\dots,\gamma_i>0$ and $0\le \rho_1<\dots \rho_n$. We then apply this technique to propose a weak second order scheme for the multifactor Heston model introduced by Abi~Jaber and El~Euch~\cite{AJEE} as a proxy of the rough Heston model. This was the original motivation of this work. Namely, combining our general approach with the second order schemes for the Cox-Ingersoll-Ross (CIR) and Heston model proposed in Alfonsi~\cite{AA_MCOM}, we propose new approximation schemes for the multifactor version of these models. We observe well on numerical experiments a weak convergence of order two. We also compare our scheme to the Euler schemes developed in Richard et al.~\cite{RTY}  and Alfonsi and Kebaier~\cite{AK} for option pricing in the rough Heston model and shows that it outperforms these schemes in terms of bias and computation time.   

The paper is organized as follows. Section~\ref{Sec_kernels} is devoted to the analysis on nonnegativity preserving kernels. We first give the definition and obtain a first characterization of these kernels that does not assume any regularity. Then, we focus on completely monotone kernels and show that they preserve nonnegativity. Last, we study the relation between the existence of a nonnegative nonincreasing resolvent of the first kind and the property of preserving nonnegativity. Section~\ref{Sec_SVEs} is dedicated to the stochastic invariance. The first part presents the approximation of SVEs and shows that it stays in some convex domain by taking advantage of the nonnegativity preserving property. We estimate the strong error and then state our main results on stochastic invariance that are obtained by sending the time step to zero. Section~\ref{Sec_approx} develops approximation schemes for SVEs that have a weak error of order two and that stay under suitable assumptions in a closed convex domain. It first recalls general results on the weak approximation of SDEs and then presents a second order scheme for SVEs for the weak error. Last, it develops second order schemes for the multifactor CIR and Heston models and presents some numerical results in this framework.

\section{Nonnegativity preserving convolution kernels}\label{Sec_kernels}
\subsection{Definition and characterization}

\begin{definition}\label{def_PPKernels}
  Let $G:\R_+ \to \R_+$ be a function (called kernel) such that $G(0)>0$.  We say that the kernel $G$ preserves nonnegativity if, for any $K\in \N^*$ and any $x_1,\dots,x_K \in \R$ and $0\le t_1< \dots<t_K$ such that
  $$\forall k \in \{1, \dots, K\},\ \sum_{k'=1}^k x_{k'}G(t_k-t_{k'})\ge 0, $$
  we have
  \begin{equation}\label{posit}
    \forall t \ge 0, \sum_{k :  t_k \le t} x_k G(t-t_k) \ge 0 \quad (\text{with the convention } \sum_{\emptyset}=0).
  \end{equation} 
\end{definition}
Note that in this definition, $x_1$ is nonnegative, but $x_k$ may be negative for some $k\ge 2$. We start by presenting the most simple example of nonnegativity preserving kernels. 
\begin{example}\label{example_expo}
Exponential functions $G(t)=\lambda e^{-\rho t}$ with $\lambda>0$ and $\rho \in \R$ clearly preserve nonnegativity. In this case, we have $\sum_{k :  t_k \le t} x_k G(t-t_k)=e^{-\rho(t-t_k)} \sum_{k'=1}^k x_{k'}G(t_k-t_{k'})\ge 0$ for $t_k\le t< t_{k+1}$ (convention $t_{K+1}=+\infty$). 
\end{example}
\noindent Let us stress that we assume in Definition~\ref{def_PPKernels} that $G(0)$ is finite. It would be meaningless to extend Definition~\ref{def_PPKernels} to kernels $G:\R_+^*\to \R_+$ such that $G(0+)=\lim_{t\to 0, t>0}G(t)$ exists and $G(0+)=+\infty$. In fact, if we have  
$\sum_{k :  t_k \le t} x_k G(t-t_k) \ge 0$ for all $t\ge 0$ such that $t\not \in \{t_k,1\le  k\le K\}$, then we get $x_k\ge 0$ for all $k$ by taking the limit in $t_k+$. Conversely, we clearly have $\sum_{k :  t_k \le t} x_k G(t-t_k) \ge 0$ when all the $x_k$'s are nonnegative. For those kernels, we will rather try when needed to approximate them by a sequence of positivity preserving kernels, as later on in Theorems~\ref{thm_comparison_result} and~\ref{thm_SVE_dom}. 

\begin{remark} It would be natural to extend the notion of preserving nonnegativity to double kernels $\Gamma:\{(s,t)\in \R_+: s\le t \} \to \R_+$ as follows: 
  for any $K\in \N^*$ and any $x_1,\dots,x_K \in \R$ and $0\le t_1< \dots<t_K$ such that
  $$\forall k \in \{1, \dots, K\},\ \sum_{k'=1}^k x_{k'}\Gamma(t_{k'},t_k)\ge 0, $$
  we have $\forall t \ge 0, \sum_{k :  t_k \le t} x_k \Gamma(t_k,t) \ge 0$. In this paper, we only focus on convolution kernels \blue{ since they are widely used,  and one of our goal is to make the connection with the existing literature on stochastic invariance for SVEs. A priori, there is no major difficulty to adapt the results of Section~\ref{Sec_SVEs} to nonnegativity preserving double kernels. A more challenging direction is to find relevant examples of nonnegativity preserving double kernels that are not of convolution type.  }
\end{remark} 

We first give a first characterisation of nonnegativity preserving kernels. For a kernel~$G:\R_+ \to \R_+$, we denote
\begin{align}
  \EKG:=\Bigg\{ (x_1,\dots,x_K,t_1,\dots,t_K) \in \R^{2K}& : 0\le t_1< \dots<t_K  \text{ and } \label{def_EKG}\\ 
 &\forall k \in \{1, \dots, K\},\ \sum_{k'=1}^k x_{k'}G(t_k-t_{k'})\ge 0 \Bigg\}.\notag
\end{align}
\begin{definition} For a kernel $G:\R_+ \to \R_+$ such that $G(0)> 0$,  we define by induction, for $l\in \N^*$, the functions $G_l:(\R_+^*)^{l}\to \R$ by   $G_1(a_1)=G(a_1)/G(0)$ for $a_1> 0$ and
  \begin{equation}\label{def_Gl}
    G_l(a_1,\dots,a_l)=G_{l-1}(a_1,\dots,a_{l-2},a_{l-1}+a_l)-G_1(a_l)G_{l-1}(a_1,\dots,a_{l-1}),
  \end{equation}
  for $l\ge 2$, $a_1,\dots,a_l \in \R^*_+$.
\end{definition}

\begin{lemma}\label{lem_char_pos} Let~$G:\R_+ \to \R_+$ such that $G(0)> 0$. Let $K\in \N^*$ and  $0\le t_1 <\dots <t_K < t_{K+1}=t$ . Then,
  \begin{align}& \left( (x_1,\dots,x_K,t_1,\dots,t_K)\in \EKG \implies \sum_{k=1}^K x_kG(t-t_k)\ge 0 \right) \label{implic} \\ \iff &\forall j\in \{1,\dots,K\}, G_j(t-t_K,t_K-t_{K-1},\dots,t_{K-j+2}-t_{K-j+1})\ge 0. \notag
  \end{align}
  Let $0\le t_1 <\dots <t_K$ and define $x_1=1$ and  $x_k=\frac{-1}{G(0)} \sum_{k'=1}^{k-1} x_{k'}G(t_k-t_{k'})$ for $k\ge 2$, so that $\sum_{k'=1}^k x_{k'}G(t_k-t_{k'})=0$. Then, for $k\in \{1,\dots,K-1\}$, we have
  $$x_{k+1}=-G_k(t_{k+1}-t_{k},\dots,t_2-t_1).$$
\end{lemma}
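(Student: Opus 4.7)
The plan is to prove the explicit formula for $x_{k+1}$ first, and then to use it to derive the equivalence~\eqref{implic} by decomposing $\EKG$ along $K$ ``fundamental'' solutions.

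For the explicit formula, set $a_i := t_{i+1}-t_i$ and $h_k(u) := G(0)^{-1}\sum_{k'=1}^k x_{k'}G(t_k+u-t_{k'})$ for $u\ge 0$. The relation $x_k = -h_{k-1}(a_{k-1})$, which is simply a restatement of the recursion defining~$x_k$, combined with splitting off the last summand of $h_k$ yields
\[
h_k(u) = h_{k-1}(a_{k-1}+u) - h_{k-1}(a_{k-1})\,G_1(u),\qquad k\ge 2,
\]
with $h_1(u)=G_1(u)$. A term-by-term comparison with~\eqref{def_Gl} then gives by induction on~$k$ that $h_k(u) = G_k(a_1,\ldots,a_{k-1},u)$, so $x_{k+1} = -h_k(a_k) = -G_k(a_1,\ldots,a_k)$. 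To match the reversed form stated in the lemma I invoke the symmetry $G_k(a_1,\ldots,a_k)=G_k(a_k,\ldots,a_1)$, which I would establish through the combinatorial expansion
\[
G_k(a_1,\ldots,a_k) = \sum_{C}(-1)^{|C|-1}\prod_{B\in C} G_1\!\Bigl(\sum_{j\in B} a_j\Bigr),
\]
the sum ranging over all compositions~$C$ of $\{1,\ldots,k\}$ into consecutive blocks. This expansion is verified by induction on~$k$: the two terms of~\eqref{def_Gl} exactly correspond to the partition of compositions of $\{1,\ldots,l\}$ into those in which $l-1$ and $l$ share a block, and those in which $\{l\}$ is a singleton. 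The reversal $(a_1,\ldots,a_k)\mapsto (a_k,\ldots,a_1)$ is a bijection on such compositions that preserves the block sums, whence the symmetry.

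For the equivalence, I would introduce for each $k_0\in\{1,\ldots,K\}$ the ``fundamental'' vector $x^{(k_0)}$ defined by $x^{(k_0)}_k=0$ for $k<k_0$, $x^{(k_0)}_{k_0}=1/G(0)$, and $x^{(k_0)}_k = -G(0)^{-1}\sum_{k'=k_0}^{k-1} x^{(k_0)}_{k'}G(t_k-t_{k'})$ for $k>k_0$, so that $\sum_{k'=1}^k x^{(k_0)}_{k'}G(t_k-t_{k'})=\delta_{k,k_0}$. In particular $x^{(k_0)}\in\EKG$, and applying the (just-proved) explicit formula to the $(K-k_0+2)$-point sub-problem on times $t_{k_0},\ldots,t_K,t$ (with defining coefficients $G(0)\,x^{(k_0)}_k$) gives
\[
\sum_{k=k_0}^K x^{(k_0)}_k G(t-t_k) = G_{K-k_0+1}\bigl(t-t_K,\,t_K-t_{K-1},\,\ldots,\,t_{k_0+1}-t_{k_0}\bigr).
\]
The linear map $x\mapsto(\alpha_k)_{k=1}^K$ with $\alpha_k := \sum_{k'=1}^k x_{k'}G(t_k-t_{k'})$ is lower triangular with diagonal $G(0)\neq 0$, hence invertible; thus every $x\in\EKG$ decomposes as $x=\sum_{k_0=1}^K \alpha_{k_0}\,x^{(k_0)}$ with $\alpha_{k_0}\ge 0$, and summing yields
\[
\sum_{k=1}^K x_k G(t-t_k) = \sum_{j=1}^K \alpha_{K-j+1}\,G_j\bigl(t-t_K,\ldots,t_{K-j+2}-t_{K-j+1}\bigr).
\]
The $\Leftarrow$ direction of~\eqref{implic} is immediate. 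For the $\Rightarrow$ direction, testing the assumed implication on $x=x^{(k_0)}\in\EKG$ isolates the single term $G_{K-k_0+1}(\ldots)\ge 0$; letting $k_0$ range over $\{1,\ldots,K\}$ produces all $K$ required inequalities.

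The one genuinely subtle step is the reversal symmetry of~$G_k$: since~\eqref{def_Gl} is asymmetric in its arguments, a direct induction on the recursion does not close, so the combinatorial expansion above appears to be the cleanest remedy. Everything else reduces to the triangular change of coordinates $x\leftrightarrow\alpha$ and routine bookkeeping.
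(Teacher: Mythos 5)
Your proof is correct, and the route differs from the paper's in two substantive ways. First, you reverse the logical order: you prove the explicit formula for the $x_k$ directly (via the auxiliary function $h_k(u) = G(0)^{-1}\sum_{k'\le k}x_{k'}G(t_k+u-t_{k'})$, whose recursion mirrors~\eqref{def_Gl}), and then deduce the characterization~\eqref{implic} from it by decomposing $x$ along the fundamental solutions $x^{(k_0)}$. The paper does the opposite: it first sets up the minimization problem $\inf_{x\in\EKG}\sum_k x_kG(t-t_k)$, writes the objective as $\sum_k\beta_k\alpha_k$ where $\alpha_k$ are the constraint values, shows the infimum is nonnegative iff all $\beta_k\ge 0$, and computes $\beta_{K+1-j}=G_j(t-t_K,\ldots)$ by Gaussian elimination on the dual (lower triangular) system; the $x_{k+1}$ formula is then read off as a corollary by taking $t=t_{K+1}$. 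Your coefficients $\sum_k x^{(k_0)}_k G(t-t_k)$ are exactly the paper's $\beta_{k_0}$, so the linear algebra is the same, just expressed on the primal side rather than the dual one. Second, and this is the genuinely new ingredient, your bookkeeping produces $G_k$ with arguments in forward order $(a_1,\ldots,a_k)$ while the lemma and the paper's Gaussian elimination produce the reversed order; you therefore need the reversal symmetry $G_k(a_1,\ldots,a_k)=G_k(a_k,\ldots,a_1)$, which is not obvious from~\eqref{def_Gl}. Your combinatorial expansion over compositions of $\{1,\ldots,k\}$ into consecutive blocks is correct (the induction matching the two terms of~\eqref{def_Gl} to ``$l$ joins the last block'' vs.\ ``$\{l\}$ is a singleton'' checks out, and reversal preserves block sums), and it is a clean fix; but note that the paper sidesteps the issue entirely by choosing $a_1=t-t_K$, $a_k=t_{K-k+2}-t_{K-k+1}$ from the start, so the reversal is built into the indexing. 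The upshot: your approach is slightly longer because of the extra symmetry lemma, but that lemma (and the closed-form expansion of $G_k$) has independent interest; the paper's route is more economical but leaves $G_k$ opaque.
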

Lemma~\ref{lem_char_pos} allows to characterize nonnegativity preserving kernels, that amounts to have the implication~\eqref{implic} for all $K\in \N$ and $0\le t_1<\dots< t_K< t$. This gives the following theorem. 
\begin{theorem}\label{thm_char_pos}
  Let~$G:\R_+ \to \R_+$ such that $G(0)> 0$. The kernel~$G$ preserves nonnegativity if, and only if all the functions $G_l:(\R_+^*)^l\to \R$, $l\in \N^*$, defined by~\eqref{def_Gl} are nonnegative.
\end{theorem}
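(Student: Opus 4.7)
The plan is to deduce Theorem~\ref{thm_char_pos} directly from Lemma~\ref{lem_char_pos}, which already characterises, at a \emph{fixed} terminal time $t$ and fixed $t_1<\dots<t_K<t$, when the implication~\eqref{implic} holds in terms of the nonnegativity of suitable values of the $G_j$'s. What remains is to translate Definition~\ref{def_PPKernels} into such fixed-time implications for every $t\ge 0$, and, for the converse, to realise any prescribed tuple $(a_1,\dots,a_l)\in(\R_+^*)^l$ as arguments arising from an admissible time configuration.

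For sufficiency, I assume that all $G_l$ are nonnegative. Fix $K\ge 1$ and $(x_1,\dots,x_K,t_1,\dots,t_K)\in\EKG$, and set $S(t):=\sum_{k:t_k\le t}x_kG(t-t_k)$. I would check $S(t)\ge 0$ by distinguishing three cases: when $t<t_1$ the sum is empty; when $t=t_k$ the inequality is exactly the defining constraint of $\EKG$; and when $t_k<t<t_{k+1}$ (with the convention $t_{K+1}=+\infty$) I apply Lemma~\ref{lem_char_pos} with $K$ replaced by $k$ and terminal time $t$. The prefix $(x_1,\dots,x_k,t_1,\dots,t_k)$ lies in $\mathscr{E}_k^G$ because the first $k$ constraints defining $\EKG$ are preserved, so the equivalence in Lemma~\ref{lem_char_pos}, together with the standing hypothesis that every $G_j$ is nonnegative, yields $S(t)\ge 0$.

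For necessity, I assume that $G$ preserves nonnegativity and fix $l\ge 1$ together with $a_1,\dots,a_l>0$. Guided by the argument order in~\eqref{implic}, I would build a strictly increasing sequence $t_1<\dots<t_l<t$ with gaps chosen in reverse: take $t_1:=0$, $t_{i+1}-t_i:=a_{l+1-i}$ for $1\le i\le l-1$, and $t-t_l:=a_1$. Because $G$ preserves nonnegativity, the implication~\eqref{implic} holds on this configuration, and Lemma~\ref{lem_char_pos} then forces in particular $G_l(t-t_l,t_l-t_{l-1},\dots,t_2-t_1)=G_l(a_1,\dots,a_l)\ge 0$, which is what was to be shown. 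The only subtlety is the reversed indexing when turning a given tuple $(a_1,\dots,a_l)$ into consecutive time gaps; since Lemma~\ref{lem_char_pos} carries all the analytic content, no further obstacle is expected.
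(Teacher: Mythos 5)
Your proof is correct, and it follows the same route the paper takes: the paper simply asserts that Theorem~\ref{thm_char_pos} follows from Lemma~\ref{lem_char_pos} by observing that preservation of nonnegativity is equivalent to the implication~\eqref{implic} holding for every $K$ and every choice of $0\le t_1<\dots<t_K<t$, whereas you have spelled out the intervening case split ($t<t_1$, $t=t_k$, $t_k<t<t_{k+1}$) and the reverse-index bookkeeping that the paper leaves implicit. No new idea beyond Lemma~\ref{lem_char_pos} is used or needed in either version.
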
  
The second part of Lemma~\ref{lem_char_pos} considers, roughly speaking, the "worst case" strategies for the nonnegativity that start from~$1$ at time $t_1$ and that are brought back to zero at each time $t_k$, for $k\ge 2$. Thanks to Theorem~\ref{thm_char_pos}, we see that the kernel~$G$ preserves nonnegativity if, and only if those strategies remains nonnegative for any $K\in \N^*$ and any times $0\le t_1<\dots<t_K$. We also get the following interesting corollary.

\begin{corollary}\label{cor_expoinvariance}
  Let~$G:\R_+ \to \R_+$ such that $G(0)> 0$. Then, $G:\R_+ \to \R_+$ preserves nonnegativity if, and only if $t\mapsto G(t)e^{-\rho t}$ preserves nonnegativity for any $\rho \in \R$. 
\end{corollary}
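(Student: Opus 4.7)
The plan is to give a direct proof from Definition~\ref{def_PPKernels} by means of a diagonal change of variable, rather than invoking Theorem~\ref{thm_char_pos}. Fix $\rho\in\R$ and set $\tilde G(t)=G(t)e^{-\rho t}$; since $\tilde G(0)=G(0)>0$, the kernel $\tilde G$ satisfies the standing hypothesis of Definition~\ref{def_PPKernels}, so both statements of the corollary make sense.

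The key identity is that for any $0\le t_1<\dots<t_K$ and any $x_1,\dots,x_K\in\R$,
$$\sum_{k'=1}^{k} x_{k'}\,\tilde G(t_k-t_{k'})
   =e^{-\rho t_k}\sum_{k'=1}^{k}(x_{k'}e^{\rho t_{k'}})\,G(t_k-t_{k'}),$$
and more generally $\sum_{k:\,t_k\le t}x_k\tilde G(t-t_k)=e^{-\rho t}\sum_{k:\,t_k\le t}(x_ke^{\rho t_k})G(t-t_k)$. I would therefore introduce the bijection $(x_1,\dots,x_K)\leftrightarrow(y_1,\dots,y_K)$ defined by $y_k=x_ke^{\rho t_k}$. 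Since $e^{-\rho t_k}>0$ and $e^{-\rho t}>0$, this bijection turns the system of inequalities defining $\mathscr{E}_K^{\tilde G}$ into the one defining $\mathscr{E}_K^{G}$, and turns the conclusion $\sum_{k:\,t_k\le t}x_k\tilde G(t-t_k)\ge 0$ into the conclusion $\sum_{k:\,t_k\le t}y_kG(t-t_k)\ge 0$. Consequently, the defining implication of Definition~\ref{def_PPKernels} holds for $\tilde G$ if and only if it holds for $G$, which yields both directions simultaneously (taking $\rho$ and then $-\rho$).

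There is essentially no obstacle: the only thing to be careful about is that the change of variable $y_k=x_k e^{\rho t_k}$ is genuinely a bijection of $\R^K$ onto itself for every choice of times $t_1,\dots,t_K$, and that the positive multiplicative factors $e^{\pm\rho t}$ never flip a sign. As a sanity check, one could alternatively verify by induction on~$l$ that the functions associated to $\tilde G$ via~\eqref{def_Gl} satisfy $\tilde G_l(a_1,\dots,a_l)=e^{-\rho(a_1+\dots+a_l)}G_l(a_1,\dots,a_l)$, so that $\tilde G_l\ge 0$ iff $G_l\ge 0$, and Theorem~\ref{thm_char_pos} delivers the same conclusion; this confirms the robustness of the argument and recovers Example~\ref{example_expo} as the case $G\equiv\lambda$.
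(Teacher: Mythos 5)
Your proof is correct, and it takes a genuinely different route from the paper's. The paper proves the corollary by appealing to Theorem~\ref{thm_char_pos}: it defines the associated functions $G^\rho_l$ and verifies by induction that $G^\rho_l(a_1,\dots,a_l)=e^{-\rho\sum_j a_j}G_l(a_1,\dots,a_l)$, so that nonnegativity of all $G_l$ and of all $G^\rho_l$ are equivalent. You instead argue directly from Definition~\ref{def_PPKernels}, using the diagonal bijection $y_k=x_ke^{\rho t_k}$ and the identity $\tilde G(t-t_k)=e^{-\rho t}e^{\rho t_k}G(t-t_k)$ to show that the hypothesis set $\EKG$ and the conclusion of the definition are carried isomorphically between $G$ and $\tilde G$; the positive factors $e^{-\rho t_k}$ and $e^{-\rho t}$ preserve all the sign constraints. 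Your argument is more elementary and self-contained — it does not require the machinery of Lemma~\ref{lem_char_pos} or the $G_l$ functions — while the paper's proof is shorter once Theorem~\ref{thm_char_pos} is already in hand, which fits the flow of the section since that theorem has just been stated. Both are valid; you also correctly observe (and indeed sketch as a sanity check) that the $G^\rho_l=e^{-\rho\sum a_j}G_l$ identity underlies the paper's version, so the two proofs are exploiting the same scaling fact at different levels.
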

\begin{proof}
  Let us define $G^\rho(t)=G(t)e^{-\rho t}$. We consider the associated functions $G_l^\rho:(\R_+^*)^l\to \R$ defined inductively by $G_1^\rho(a_1)=G^\rho(a_1)/G^\rho(0)$ and $G^\rho_l(a_1,\dots,a_l)=G^\rho_{l-1}(a_1,\dots,a_{l-2},a_{l-1}+a_l)-G^\rho_1(a_1)G^\rho_{l-1}(a_1,\dots,a_{l-2},a_{l-1})$. We have $G^\rho_l(a_1,\dots,a_l)=e^{-\rho \sum_{j=1}^l a_j}G_l(a_1,\dots,a_l)$: this is true for $l=1$ and then obvious by induction. Therefore $G_l\ge0 \iff G^\rho_l\ge 0$, and we conclude by Theorem~\ref{thm_char_pos}.
\end{proof}

\begin{proof}[Proof of Lemma~\ref{lem_char_pos}.]
We prove the first part of the statement and consider $0\le t_1<\dots<t_K$.  We are interested in the following minimization problem,  for $t> t_K$:
\begin{equation}  \label{min_pb} \inf_{(x_1,\dots,x_K): (x_1,\dots,x_K,t_1,\dots,t_K)\in \EKG}\ \sum_{k=1}^Kx_kG(t-t_k).
\end{equation}
The implication~\eqref{implic}, is equivalent to the nonnegativity of this infimum, which we want to characterize. We note that the set of constraints is linear and triangular \blue{with nonzero diagonal elements since $G(0)>0$. Thus,}  we can write the linear function to optimize as a linear combination of the constraints:
\begin{equation}\label{lin_to_opt}
  \sum_{k=1}^Kx_kG(t-t_k)= \sum_{k=1}^K \beta_k \left( \sum_{k'=1}^k x_{k'}G(t_k-t_{k'})\right), 
\end{equation}
with
\begin{equation}\label{def_betas}
  \forall k' \in \{1,\dots, K \}, \sum_{k=k'}^K \beta_k G(t_k-t_{k'})  =G(t-t_{k'}).
\end{equation}
We rewrite~\eqref{def_betas} explicitly from $k'=1$ to $k'=K$ as
\begin{equation}
  \begin{cases}
    \sum_{k=1}^K \beta_k G_1(t_k-t_1)  =G_1(t-t_1)\\
    \sum_{k=2}^K \beta_k G_1(t_k-t_2)  =G_1(t-t_2) \\
    \phantom{\sum_{k=2}^K \beta_k G_1(t_k-t_2) }  \vdots \\
    \beta_{K-1}+\beta_KG_1(t_{K}-t_{K-1})=G_1(t-t_{K-1})\\
    \beta_K=G_1(t-t_K).
  \end{cases}
\end{equation}
The infimum~\eqref{min_pb} is nonnegative if, and only if $\beta_1,\dots,\beta_K\ge 0$: the nonnegativity of  $\sum_{k=1}^Kx_kG(t-t_k)$ is clear when $\beta$'s are nonnegative from~\eqref{lin_to_opt}. Otherwise, if $\beta_j<0$ for some $j\in \{1,\dots, K\}$, the infimum is equal $-\infty$ by taking  $x_1,\dots,x_K$ such that $\sum_{k'=1}^k x_{k'}G(t_k-t_{k'})= \lambda \mathbf{1}_{k=j}$ with $\lambda>0$ arbitrary large (such $x$'s exists and is unique by linear independence of the linear functions $\sum_{k'=1}^k x_{k'}G(t_k-t_{k'})$ which is an invertible triangular system because $G(0)\not=0$).

Now, we calculate explicitly $\beta$'s values. We set $a_1=t-t_K$ and $a_k=t_{K-k+2}-t_{K-k+1}$ for $k\in \{2,\dots, K\}$, and we get
by the Gauss elimination method
\begin{equation*}
  \begin{cases}
    \sum_{k=1}^{K-1} \beta_k G_1(t_k-t_1)  =G_1(t-t_1)-G_1(a_1)G_1(t_K-t_1)=G_2(a_1,t_K-t_1)\\
    \sum_{k=2}^{K-1} \beta_k G_1(t_k-t_2)  =G_1(t-t_2)-G_1(a_1)G_1(t_K-t_2)=G_2(a_1,t_K-t_2) \\
    \phantom{\sum_{k=2}^K \beta_k G_1(t_k-t_2) }  \vdots \\
    \beta_{K-2}+\beta_{K-1}G_1(t_{K-1}-t_{K-2})=G_2(a_1,t_K-t_{K-2})\\
    \beta_{K-1}=G_1(t-t_{K-1})-\beta_K G_1(t_{K}-t_{K-1})=G_2(a_1,t_{K}-t_{K-1})=G_2(a_1,a_2) \\
    \beta_K=G_1(a_1).
  \end{cases}
\end{equation*}
Suppose that for $j\in \{2,\dots, K-1\}$, we have proven
\begin{equation*}
  \begin{cases}
    \sum_{k=1}^{K+1-j} \beta_k G_1(t_k-t_1)  =G_j(a_1,\dots,a_{j-1},t_{K+2-j}-t_1)\\
\phantom{ \sum_{k=K-(j+1)}^{K-j} \beta_k G_1(t_k-t_{K-(j+1)})} \vdots \\
   \sum_{k=K-j}^{K+1-j} \beta_k G_1(t_k-t_{K-j})  =G_j(a_1,\dots,a_{j-1},t_{K+2-j}-t_{K-j}) \\
    \beta_{K-(j-1)}=G_j(a_1,\dots,a_j) \\
    \phantom{\beta_{K-j}}\vdots\\
    \beta_K=G_1(a_1).
  \end{cases}
\end{equation*}
Then by using Gauss elimination, we get
\begin{equation*}
  \begin{cases}
    \sum_{k=1}^{K-j} \beta_k G_1(t_k-t_1)  =G_j(a_1,\dots,a_{j-1},t_{K+2-j}-t_1)-\beta_{K-(j-1)}G_1(t_{K+1-j}-t_1)\\
    \phantom{\sum_{k=1}^{K-j} \beta_k G_1(t_k-t_1)  =G_j(a_1,\dots,a_{j-1},t_{K+2-j}-t_1)}=G_{j+1}(a_0,\dots,a_{j},t_{K+1-j}-t_{1}) \\
\phantom{ \sum_{k=K-(j+1)}^{K-j} \beta_k G_0(t_k-t_{K-(j+1)})} \vdots \\
 \beta_{K-j}= G_j(a_1,\dots,a_{j-1},a_j+a_{j+1})- G_j(a_1,\dots,a_j) G_1(a_{j+1})=G_{j+1}(a_1,\dots,a_{j},a_{j+1})  \\
    \beta_{K-(j-1)}=G_j(a_1,\dots,a_j) \\
    \phantom{\beta_{K-j}}\vdots\\
    \beta_K=G_1(a_1),
  \end{cases}
\end{equation*}
which proves the induction step. Thus, we finally obtain
\begin{equation}
  \forall j \in \{1,\dots,K\}, \ \beta_{K+1-j}=G_j(a_1,\dots,a_j)
\end{equation}

We now prove the second part of the statement, and consider $x_1=1$, and $x_k=-\sum_{k'=1}^{k-1}G_1(t_k-t_{k'})x_{k'}$ for $k\ge 2$, so that $\sum_{k'=1}^kG(t_k-t_{k'})x_{k'}=0$. From~\eqref{lin_to_opt}, we get for $t=t_{K+1}$ that $x_{K+1}$ satisfies:
\begin{align*}
  G(0)x_{K+1}=-\sum_{k=1}^{K}G(t_{K+1}-t_{k})x_{k}&=-\sum_{k=1}^K \beta_k \left( \sum_{k'=1}^k x_{k'}G(t_k-t_{k'})\right) \\& = - \beta_1G(0)= -G(0)G_K(a_1,\dots,a_K),
\end{align*}
leading to $x_{K+1}=-G_K(t_{K+1}-t_{K},t_{K}-t_{K-1},\dots,t_2-t_1)$. The parameter $K\in \N^*$ being arbitrary, this gives the claim.
\end{proof}

We conclude this subsection by a useful property satisfied by nonincreasing kernels that preserves nonnegativity. 

\begin{prop}\label{prop_pos_gen}
  Let $G:\R_+\to \R_+$ be a nonincreasing kernel such that $G(0)>0$ that preserves nonnegativity. Let $K\in \N^*$, $0\le t_1 <\dots <t_K$ and $x_0,\dots,x_K \in \R$ be such that $x_0\ge 0$ and
  $$\forall k \in \{1,\dots, K\}, \ x_0+\sum_{k'=1}^kx_{k'}G(t_k-t_{k'})\ge 0.$$
  Then, we have $x_0+\sum_{k: t_k\le t} x_kG(t-t_k)\ge 0$ for all $t\ge 0$.
\end{prop}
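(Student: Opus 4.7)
The idea is to absorb the constant $x_0$ into a modification of the jump sequence so as to reduce to the nonnegativity-preserving property of~$G$ applied to a modified sequence. Since $G(0)>0$, the triangular linear system
\[ \sum_{k'=1}^k c_{k'}\,G(t_k-t_{k'}) = x_0, \qquad k=1,\dots,K, \]
has a unique solution $(c_1,\dots,c_K)$. Defining $\tilde x_k := x_k + c_k$, the hypothesis of the proposition yields, at each~$t_k$,
\[ \sum_{k'=1}^k \tilde x_{k'}\,G(t_k-t_{k'}) \;=\; x_0 + \sum_{k'=1}^k x_{k'}\,G(t_k-t_{k'}) \;\ge\; 0. \]
Hence the nonnegativity-preserving property of~$G$ (Definition~\ref{def_PPKernels}) gives $\sum_{k:t_k\le t}\tilde x_k\, G(t-t_k) \ge 0$ for every $t\ge 0$.

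The conclusion then follows from the comparison $\sum_{k:t_k\le t} c_k\, G(t-t_k)\le x_0$, via the decomposition
\[ x_0 + \sum_{k:t_k\le t} x_k\, G(t-t_k) \;=\; \Bigl(x_0 - \sum_{k:t_k\le t} c_k\, G(t-t_k)\Bigr) + \sum_{k:t_k\le t} \tilde x_k\, G(t-t_k) \;\ge\; 0. \]
Establishing this comparison is the heart of the argument, and it uses the nonincreasing hypothesis on~$G$ twice. First, $c_k\ge 0$ for every $k$: one has $c_1 = x_0/G(0)\ge 0$, and subtracting the $(k-1)$st equation from the $k$th yields
\[ G(0)\,c_k \;=\; \sum_{k'=1}^{k-1} c_{k'}\bigl(G(t_{k-1}-t_{k'}) - G(t_k-t_{k'})\bigr), \]
which is nonnegative by $G$ nonincreasing together with the induction hypothesis. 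Second, for $t\in[t_j, t_{j+1})$ with $1\le j\le K$ and the convention $t_{K+1}=+\infty$ (the case $t<t_1$ being trivial, since the sum is empty and $x_0\ge 0$), the inequality $t-t_{k'}\ge t_j-t_{k'}$, together with monotonicity of~$G$ and $c_{k'}\ge 0$, gives
\[ \sum_{k'=1}^{j} c_{k'}\, G(t-t_{k'}) \;\le\; \sum_{k'=1}^{j} c_{k'}\, G(t_j-t_{k'}) \;=\; x_0, \]
the last equality being the $j$th relation defining the $c_{k'}$'s.

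The main obstacle is precisely the bound $\sum c_k\, G(\cdot - t_k)\le x_0$: the auxiliary sequence matches $x_0$ exactly at the atoms $t_j$ by construction, but must then be controlled between them, and both the sign of the $c_k$'s and the non-overshoot off the grid depend on the nonincreasing hypothesis. Without monotonicity, neither ingredient would be available and this auxiliary-sequence trick would collapse, which is presumably why the proposition is stated only for nonincreasing $G$.
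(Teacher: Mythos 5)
Your proof is correct and, modulo a sign convention ($c_k = -\tilde{x}_k$ in the paper's notation), it takes essentially the same route as the paper: introduce the auxiliary sequence that saturates the constraint at every $t_k$, subtract it off, apply the nonnegativity-preserving property to the difference, and control the auxiliary part using the nonincreasing hypothesis on $G$. The only cosmetic difference is that you establish $c_k \ge 0$ by a direct induction on the linear system and then bound the auxiliary sum on each interval $[t_j,t_{j+1})$ in one step, whereas the paper interleaves the sign argument with a "vanishes at $t_j$ and is nondecreasing on $[t_j,t_{j+1})$" argument; both hinge on precisely the same two uses of monotonicity that you identify.
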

\begin{proof}
  We introduce $\tilde{x}_1=-x_0/G(0)$ and $\tilde{x}_k=\frac{-1}{G(0)}\left( x_0+ \sum_{k'=1}^{k-1} \tilde{x}_{k'}G(t_k-t_{k'})\right)$, so that $ x_0+\sum_{k'=1}^k \tilde{x}_{k'}G(t_k-t_{k'})= 0$ for all $k\in \{1,\dots, K\}$.

  Let $\delta_k=x_k-\tilde{x}_k$ for $k\in \{1,\dots,K\}$. We have  $\forall k \in \{1,\dots, K\}, \ \sum_{k'=1}^k\delta_{k'}G(t_k-t_{k'})\ge 0$, and therefore $\sum_{k: t_k\le t} \delta_kG(t-t_k)\ge 0$ because $G$ preserves nonnegativity. Since $x_0+\sum_{k: t_k\le t} x_kG(t-t_k)=x_0+\sum_{k: t_k\le t} \tilde{x}_kG(t-t_k) + \sum_{k: t_k\le t} \delta_kG(t-t_k)$, it is then sufficient to check that $x_0+\sum_{k: t_k\le t} \tilde{x}_kG(t-t_k)\ge 0$ for all $t\ge 0$. This is clearly true for $t\in[0,t_1)$. We have $\tilde{x}_1\le 0$, which gives that $x_0+\tilde{x}_1G(t-t_1)\ge 0$ for $t\in[t_1,t_2)$ since it vanishes for $t=t_1$ and $G$ is nonincreasing. Then, we get that $\tilde{x}_2\le 0$. Again, this gives that $x_0+\tilde{x}_1G(t-t_1)+\tilde{x}_2G(t-t_2)\ge 0$ for $t\in[t_2,t_3)$ since it vanishes for $t=t_2$ and $G$ is nonincreasing. By iteration, we get that $\tilde{x}_k\le 0$ for all $k$ and $x_0+\sum_{k: t_k\le t} \tilde{x}_kG(t-t_k)\ge 0$ for all $t\ge 0$.
\end{proof}

\begin{remark}\label{rk_Hawkes1}
 We can straightforwardly extend Proposition~\ref{prop_pos_gen} by replacing the constant $x_0$ by a nonnegative nondecreasing function $f:\R_+\to \R_+$. Namely, we have $f(t)+\sum_{k: t_k\le t} x_kG(t-t_k)\ge 0$ for all $t\ge 0$ if $f(t_k)+\sum_{k'=1}^k x_{k'} G(t_k-t_{k'})\ge 0$ for all $k\ge 0$. As an application of this result, if $(N_t,t\ge 0)$ is a process that jumps with intensity 
 $$\lambda_t=f(t)+\int_0^tG(t-s) dN_s,$$
 and such that $dN_t\ge \frac{-1}{G(0)} \left(f(t_k)+ \int_{[0,t_k)}G(t-s) dN_s\right)$, then we have well $\lambda_t\ge 0$, for all $t\ge 0$ by Proposition~\ref{prop_pos_gen}.
\end{remark}
\begin{remark}\label{rk_rightcontinuity}
Let $G:\R_+\to \R_+$ be a nonincreasing kernel that preserves nonnegativity and is continuous in~$0$, i.e. $G(0)=G(0+)>0$. Then, $G$ is right continuous. Indeed, we get by Theorem~\ref{thm_char_pos} that $G_2(a_1,a_2)=G_1(a_1+a_2)-G_1(a_1)G_1(a_2)\ge 0$ for $a_1,a_2> 0$. Letting $a_2 \to 0$, we get $G(a_1+)\ge G(a_1)$ and thus  $G(a_1)=G(a_1+)$ since $G$ is nonincreasing.

As example, the kernel $G(t)=\alpha e^{-\rho t}+(1-\alpha)\mathbf{1}_{t=0}$ with $\alpha \in[0,1)$ and $\rho \ge 0$ satisfies $G_l(a_1,\dots,a_l)=\alpha(1-\alpha)^{l-1}e^{-\rho(a_1+\dots+a_l)}\ge 0$ and preserves nonnegativity by Theorem~\ref{thm_char_pos}.   
\end{remark}

\subsection{Completely monotone kernels preserve nonnegativity}

A function~$G:\R_+^*\to \R_+$ is said to be completely monotone if it is $\mathcal{C}^\infty$ on $\R_+^*$ and such that $(-1)^n G^{(n)}\ge 0$. It is well known from the celebrate Hausdorff-Bernstein-Widder theorem~\cite[Theorem IV.12a]{Widder} that this is equivalent to the existence of a Borel measure $\mu$ on $\R_+$ such that $G(t)=\int_{(0,+\infty)} e^{-\rho t} \mu(d\rho)$. When $G(0)<\infty$ (i.e. $\mu$ is finite), then $G$ can be extended to a function from $\R_+$ to $\R_+$, which we note $G:\R_+\to \R_+$.

\begin{theorem}\label{thm:positivite}
Complete monotone kernels  $G:\R_+\to \R_+$ preserve nonnegativity. More generally, if $\mu$ is a finite Borel measure on~$\R$ such that $G(t)=\int_{\R}e^{-\rho t}\mu(d\rho)<\infty$ for all $t\ge 0$, then $G$ preserves nonnegativity. \\ 
Besides, if $\mu$ is not a Dirac mass and if $x_1>0$, we have strict positivity in~\eqref{posit} for any $t>t_1$,  $t\not \in \{t_2,\dots,t_K\}$. 
\end{theorem}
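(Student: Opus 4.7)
The plan is to invoke the criterion of Theorem~\ref{thm_char_pos}, thereby reducing the assertion to showing that $G_l(a_1,\dots,a_l)\ge 0$ for every $l\ge 1$ and $a_1,\dots,a_l>0$. Writing $G(t)=\|\mu\|\,\E[e^{-Rt}]$ with $R\sim\nu:=\mu/\|\mu\|$, we have $G_1(a)=\E[e^{-Ra}]$; since the recursion~\eqref{def_Gl} only involves $G_1$, each $G_l$ depends only on the probability law $\nu$. In particular there is no reason to distinguish the completely monotone case from the general finite measure on $\R$: everything hinges on the moment generating function of $R$.

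First I would treat the base case $l=2$ by a direct symmetrization. With $R_1,R_2$ iid copies of $R$,
\begin{equation*}
G_2(a_1,a_2)=\E\bigl[e^{-R_1(a_1+a_2)}\bigr]-\E\bigl[e^{-R_1 a_1}\bigr]\E\bigl[e^{-R_2 a_2}\bigr]=\tfrac{1}{2}\E\bigl[(e^{-R_1 a_1}-e^{-R_2 a_1})(e^{-R_1 a_2}-e^{-R_2 a_2})\bigr],
\end{equation*}
which is pointwise nonnegative since for $a_1,a_2>0$ the two factors share the common sign of $R_2-R_1$.

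For $l\ge 3$ I would iterate the recursion~\eqref{def_Gl} to obtain the closed form
\begin{equation*}
G_l(a_1,\dots,a_l)=\sum_{m=1}^l\sum_{\substack{b_1+\cdots+b_m=l\\ b_k\ge 1}}(-1)^{m-1}\prod_{k=1}^m G_1\!\Bigl(\sum_{i\in B_k} a_i\Bigr),
\end{equation*}
where $B_k=\{b_1+\cdots+b_{k-1}+1,\dots,b_1+\cdots+b_k\}$ are the consecutive blocks determined by the composition. Replacing each product by an expectation over $l$ iid copies of $R$ (one per block) makes the signed sum a single expectation that I would symmetrize over permutations of $R_1,\dots,R_l$, aiming at a representation as the expectation of a nonnegative product of ``difference'' factors extending the $l=2$ identity. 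The main obstacle is the combinatorial bookkeeping of this symmetrization at arbitrary~$l$. A robust workaround is to approximate $\mu$ by finitely supported measures $\mu_n=\sum_i\alpha_i^{(n)}\delta_{\rho_i^{(n)}}$ with $\alpha_i^{(n)}>0$: since $G_l$ is a polynomial in finitely many evaluations of $G$, the pointwise convergence $G_n\to G$ transfers the nonnegativity of $(G_n)_l$ to $G_l$, reducing the matter to finitely supported~$\nu$ (where the symmetrization is cleaner because one sums over finitely many atoms).

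For the strict positivity, if $\mu$ is not a Dirac then with positive probability $R_1\ne R_2$, so the $l=2$ formula above already gives $G_2>0$ strictly; the analogous assertion $G_l>0$ for $l\ge 3$ follows from the symmetric representation, the differences being nonzero when the $R_i$'s are all distinct (an event of positive probability in the non-Dirac case). Combined with the identity~\eqref{lin_to_opt} of Lemma~\ref{lem_char_pos}, for $t\in(t_{k(t)},t_{k(t)+1})$ with $k(t)$ the largest index satisfying $t_{k(t)}\le t$ (convention $t_{K+1}=+\infty$), one obtains
\begin{equation*}
\sum_{k:t_k\le t}x_k G(t-t_k)\ge G_{k(t)}\bigl(t-t_{k(t)},t_{k(t)}-t_{k(t)-1},\dots,t_2-t_1\bigr)\cdot x_1 G(0),
\end{equation*}
which is strictly positive whenever $x_1>0$, yielding the strict inequality in~\eqref{posit} for $t>t_1$, $t\notin\{t_2,\dots,t_K\}$.
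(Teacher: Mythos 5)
The $l=2$ step is sound and is the classical comonotonicity identity, but the proposal stalls exactly where the difficulty lies. You acknowledge that the symmetrization for $l\ge 3$ is an open obstacle ("the main obstacle is the combinatorial bookkeeping"), and indeed it is: the paper's own Remark~\ref{rk_Schur} explicitly observes that already for $l=3$ the inequality
\begin{equation*}
\E[X_1^{a_1+a_2+a_3}]+\E[X_1^{a_1}]\E[X_1^{a_2}]\E[X_1^{a_3}]\ge \E[X_1^{a_1}X_2^{a_2+a_3}]+\E[X_1^{a_1+a_2}X_2^{a_3}]
\end{equation*}
``does not seem to be proved easily from comonotonicity or Schur convexity arguments,'' which is precisely the route you are attempting. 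No representation of $G_l$ as the expectation of a manifestly nonnegative random variable is exhibited, nor is one known.

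The suggested workaround of approximating $\mu$ by finitely supported measures does not close the gap. It is true that if $G_l^{(n)}\ge 0$ for the approximating kernels $G^{(n)}$ then $G_l\ge 0$ by pointwise convergence; but you still owe a proof that $G_l\ge 0$ for a kernel $G(t)=\sum_i \alpha_i e^{-\rho_i t}$ with $\alpha_i>0$, and the symmetrization over permutations is exactly as opaque in that case — the difficulty is combinatorial, not measure-theoretic. Since the strict-positivity part also rests on $G_l>0$ for all $l$, it inherits the same gap. The paper avoids this entirely by not trying to prove $G_l\ge 0$ through a product representation: instead it fixes the ``zero-barrier'' sequence $x_1=1$, $x_k=-\frac{1}{G(0)}\sum_{k'<k}x_{k'}G(t_k-t_{k'})$, sets $X_t^\rho=\sum_{k:t_k\le t}x_k e^{-\rho(t-t_k)}$, and establishes by induction a single-threshold structure: there is a unique $r_k$ with $X_{t_k}^\rho>0$ for $\rho<r_k$ and $X_{t_k}^\rho<0$ for $\rho>r_k$, and $\mu((-\infty,r_k))>0$. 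Combined with $\int_\R X_{t_k}^\rho\,\mu(d\rho)=0$, this forces $-x_{k+1}=\frac{1}{\mu(\R)}\int X_{t_{k+1}-}^\rho\,\mu(d\rho)>0$. That sign-change argument in $\rho$ is the key idea missing from your proposal, and it is what makes the induction close.
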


 There is an abundant literature on completely monotone kernels and their properties that dates back to first half of the XXth century. We mention here the celebrated Bernstein theorem~\cite{Bernstein} and the article of Schoenberg~\cite[Theorem 3]{Schoenberg} who shows that a kernel~$G$ is completely monotone if, and only if $G(t^2)$ is positive definite (i.e. the matrix $(G((t_k-t_{k'})^2))_{1\le k,k'\le K}$ is positive semidefinite for any $K\in \N^*$ and any $t_1<\dots<t_K$). 
However, up to our knowledge,  it does not seem that the  property of preserving nonnegativity has been stated in the literature for completely monotone kernels.

\begin{proof}[Proof of Theorem~\ref{thm:positivite}]
By Example~\ref{example_expo}, Theorem~\ref{thm:positivite} is true for $\mu(d\rho)=\lambda \delta_{\rho_1}(d\rho)$. We can therefore assume without loss of generality that the support of $\mu$ is not a singleton, i.e. there is no $\rho_1 \in \R$ such that $\mu(\R)=\mu(\{\rho_1\})$.

By Theorem~\ref{thm_char_pos}, it is sufficient to check that the functions~$G_l$ are nonnegative. To do so, we use the second statement of Lemma~\ref{lem_char_pos}. We consider $(t_k)_{k\in \N^*}$ an increasing sequence of nonnegative real numbers, $x_1=1$ and  $x_k=\frac{-1}{G(0)} \sum_{k'=1}^{k-1} x_{k'}G(t_k-t_{k'})$ for $k\ge 2$. We are going to prove that $x_k\le 0$ for all~$k\ge 2$, which gives the nonnegativity of the functions $G_l$ since the sequence $(t_k)_{k\in \N^*}$ is arbitrary, and then the claim by the first statement of Lemma~\ref{lem_char_pos}.

 We define, for $\rho\in \R$, $X^\rho_t=\sum_{k: t_k\le t}x_ke^{-\rho(t-t_k)}$ and  $X_t=\int_{\R} X^\rho_t \mu(d\rho)$, so that $X_t=\sum_{k: t_k\le t}x_k G(t-t_k)$. In particular, we have $X^\rho_{t_k}=\sum_{j=1}^kx_je^{-\rho(t_k-t_j)}$ and $X^\rho_{t_k-}=\sum_{j=1}^{k-1}x_je^{-\rho(t_k-t_j)}$.

We now prove by induction on $k\ge 2$ the two following properties.
\begin{enumerate}
\item $x_j<0$ for $j\in \{2,\dots,k\}$.
\item Let $r_k$ be the unique~$\rho\in \R$ such that $\sum_{j=2}^k -x_j e^{\rho(t_j-t_1)}=1$. We have $\mu((-\infty,r_k) )>0$   and 
\begin{equation}\label{induc_rk}
  X^\rho_{t_k}>0 \text{ for } \rho<r_k\text{ and }X^{\rho}_{t_k}< 0 \text{ for }\rho > r_k.
\end{equation}
\end{enumerate}
For $k=2$, we observe that $X^\rho_{t_2-}=e^{-\rho(t_2-t_1)}$ is continuous decreasing with respect to~$\rho$ and $x_2=-\frac{\int_{\R} X^\rho_{t_2-} \mu(d\rho)}{\mu(\R_+)}<0$. We consider $r_2>0$ the unique real number such that $-x_2e^{r_2(t_2-t_1)}=1$.  It satisfies~\eqref{induc_rk} since  $X^\rho_{t_2}=X^\rho_{t_2-}+x_2=e^{-\rho(t_2-t_1)}-e^{-r_2(t_2-t_1)}$. Last, we notice that $X_{t_2}=0=\int_{\R}X^{\rho}_{t_2} \mu(d\rho)$ and thus $\int_{(-\infty,r_2)}X^{\rho}_{t_2} \mu(d\rho)= \int_{(r_2,+\infty)}-X^{\rho}_{t_2} \mu(d\rho)$. Since the support of $\mu$ is not a singleton, both integrals are necessarily positive and $\mu((-\infty,r_2))>0$.

We now assume that the induction hypothesis is true for $k$. Since $X_{t_{k+1}-}^\rho= X_{t_{k}}^\rho e^{-\rho (t_{k+1}-t_k)}$,  we have $X_{t_{k+1}-}^\rho>0$ for $\rho<r_k$ and $X_{t_{k+1}-}^\rho< 0$ for $\rho> r_k$. This gives $X_{t_{k+1}-}^\rho>X_{t_{k}}^\rho e^{-r_k(t_{k+1}-t_k)}$ for $\rho<r_k$ and $X_{t_{k+1}-}^\rho\ge X_{t_{k}}^\rho e^{-r_k(t_{k+1}-t_k)}$ for $\rho\ge r_k$.  We then get
\begin{align}
 - x_{k+1}&=\frac{\int_{\R} X_{t_{k+1}-}^\rho \mu(d \rho) }{\mu(\R)} \notag \\
  &>\frac{\int_{\R} X_{t_{k}}^\rho e^{-r_k(t_{k+1}-t_k)} \mu(d \rho) }{\mu(\R)}=\frac{e^{-r_k(t_{k+1}-t_k)}}{\mu(\R)} X_{t_k}=0, \label{strict_ineg_pos}
\end{align}
the inequality being strict since $\mu((-\infty,r_k))>0$. From~$X^{\rho}_{t_{k+1}}=\sum_{j=1}^{k+1}x_je^{-\rho(t_{k+1}-t_j)}$, we get 
  $$X^{\rho}_{t_{k+1}}>0 \iff 1 > \sum_{j=2}^{k+1} (-x_j)e^{\rho(t_j-t_1)} \iff \rho<r_{k+1},$$
 where $r_{k+1}$ is such that $\sum_{j=2}^{k+1} (-x_j)e^{\rho(t_j-t_1)}=1$, since the map $\rho \mapsto \sum_{j=2}^{k+1} (-x_j)e^{\rho(t_j-t_1)} $ is an increasing bijection from $\R$ to $\R_+^*$. This gives~\eqref{induc_rk}. From
  $\int_{\R} X_{t_{k+1}}^\rho \mu(d \rho)=0$, we get  $\int_{(-\infty,r_{k+1})}X^{\rho}_{t_{k+1}} \mu(d\rho)= \int_{(r_{k+1},+\infty)}-X^{\rho}_{t_{k+1}} \mu(d\rho)>0$ since $\mu$ does not charge only one point, and therefore $\mu((-\infty,r_{k+1}))>0$.

  We now prove the last statement of the theorem (strict inequality). First, we observe that a slight adaptation of~\eqref{strict_ineg_pos} gives $X_t>\frac{e^{-r_k(t-t_k)}}{\mu(\R)}X_{t_k}=0$ for $t \in (t_k,t_{k+1})$. Now, let us consider $y_1>0$ and $y_2,\dots,y_K$ be such that $\sum_{k'=1}^ky_{k'}G(t_k-t_{k'})\ge 0$ for all $k\in \{1,\dots,K\}$. Then, $\delta_k=y_k-y_1 x_k$ satisfies $\sum_{k'=1}^k\delta_{k'}G(t_k-t_{k'})\ge 0$ for all $k\in \{1,\dots,K\}$ since $\delta_1=0$ and $\sum_{k'=1}^k x_{k'}G(t_k-t_{k'})= 0$ for $k\ge 2$. This shows $\sum_{k: t_k\le t } \delta_k G(t-t_k)\ge 0$ and thus $\sum_{k: t_k\le t } y_k G(t-t_k)\ge y_1 X_t$ that is strictly positive for $t>t_1, t \not \in \{t_2,\dots,t_K\}$. 
\end{proof}

\begin{remark}
  Let $G(t)=2e^{-t}-e^{-2t}$ for $t\ge 0$. This is a nonnegative decreasing kernel that clearly does not satisfy the assumptions of Theorem~\ref{thm:positivite}. It does not preserve nonnegativity by Theorem~\ref{thm_char_pos} since for $a_1,a_2>0$
$$G_2(a_1,a_2)=G(a_1+a_2)-G(a_1)G(a_2)=-2e^{-(a_1+a_2)}(1-e^{-a_1})(1-e^{-a_2})<0.$$ 
\end{remark}

\begin{remark}\label{rk_Hawkes2}
  ("A process that has been initiated but cannot be switched off")
  Let us consider a controlled Hawkes process $(N_t,t\ge 0)$ with unit jumps and intensity $\lambda_t=\int_0^t G(t-s)[dN_s+dJ_s]$, where $J_t$ is an adapted control $J_t=\sum_{i=0}^\infty {X_{i}}\mathbf{1}_{i\le t}$ with $X_0>0$. More precisely, if $(\cF_t)_{t\ge 0}$ denotes the natural filtration associated to $N$,  $X_{i}$ is supposed to be $\cF_{i-}$-measurable and such that $G(0)X_{i}+\int_{[0,i)} G(i-s)[dN_s+dJ_s]\ge 0$. If $G(t)=e^{-\rho t}$ for some $\rho\ge 0$, then the proces $N$ can be switched off at time $i$ by taking the control $X_i=-\int_{[0,i)} G(i-s)[dN_s+dJ_s]$ and $X_j=0$ for $j>i$. We have indeed $\lambda_t=0$ for $t\ge i$ in this case. Instead, if the kernel~$G$ is completely monotone but not proportional to an exponential function, then $\lambda_t>0$ for $t \not \in \N$ by using the strict positivity of~\eqref{posit} given by Theorem~\ref{thm:positivite}.  
\end{remark}

We conclude this section with two remarks that make some connections between Theorem~\ref{thm:positivite} and other problems in the literature.

\begin{remark}\label{rk_Schur} Let us consider $G:\R_+\to \R_+$ a completely monotone kernel such that $G(0)=1$, so that $G(t)=\E[e^{-tR}]$ for some nonnegative random variable $R$. We set $X=e^{-R}$ that takes values in $(0,1]$, and consider $(X_i)_{i\ge 1}$ that are i.i.d. with the same law as~$X$. Since $G$ preserves nonnegativity by Theorem~\ref{thm:positivite}, we have $G_l\ge 0$ according to Theorem~\ref{thm_char_pos}. The condition $G_2\ge 0$ is equivalent to have  $\E[X_1^{a_1+a_2}]\ge \E[X_1^{a_1} X_2^{a_2}]$ for $a_1,a_2\ge 0$, which is a classical comonotonic inequality, see e.g. McNeil et al.~\cite[Theorem 5.25]{McNeil}. It can also be seen as a consequence of the Schur convexity of $\phi(a_1,a_2)=\E[X_1^{a_1} X_2^{a_2}]$, see e.g. Marshall et al.~\cite[G.2.h. p.~126]{MOA}. For $l=3$, the inequality $G_3\ge 0$ can be rewritten as
      \begin{equation}\label{ineq_G3}\E[X_1^{a_1+a_2+a_3}]+\E[X_1^{a_1}]\E[X_1^{a_2}]\E[X_1^{a_3}]\ge \E[X_1^{a_1}X_2^{a_2+a_3}]+\E[X_1^{a_1+a_2}X_2^{a_3}], \ a_1,a_2,a_3\ge 0,
      \end{equation}
and does not seem to be proved easily from comonotonicity or Schur convexity arguments. Thus, Theorem~\ref{thm:positivite} which gives the nonnegativity of the functions $G_l$, $l\ge 2$, gives  a series of inequalities like~\eqref{ineq_G3} that may have their own interest.  
\end{remark}

\begin{remark}\label{rk_Bartholomew}Let $n\ge 2$. Bartholomew~\cite{Bartho} has given sufficient conditions under which the linear combination of exponential functions $f(t)=\sum_{i=1}^n\mu_i e^{-\rho_it}$ with $\mu_1,\dots,\mu_n \in \R^*$ and $0<\rho_1<\dots<\rho_n$ is nonnegative for all $t\ge 0$. To be precise, \cite{Bartho} considers in addition that $\mu_i=p_i \rho_i$ with $\sum_{i=1}^np_i =1$ to get then a probability distribution, but this does not change the problem up to a scaling factor. Here, Theorem~\ref{thm:positivite} gives another sufficient condition: $f(t)\ge 0$ for all $t\ge 0$ if there exists $\gamma_1,\dots,\gamma_n>0$, $K\in \N^*$, $x_1,\dots,x_K \in \R$ and $0\le t_1<\dots<t_K$ such that $\mu_i=\gamma_i \sum_{k=1}^K x_k e^{-\rho_i(t_{K}-t_k)}$ and
  $$\forall k \in \{1,\dots,K\},\ \sum_{i=1}^n\gamma_i \sum_{k'=1}^k x_k e^{-\rho_i (t_k-t_{k'})}\ge 0. $$
Indeed, considering the completely monotone function $G(t)=\sum_{i=1}^n \gamma_i e^{-\rho_it}$, we get $\sum_{k=1}^K x_kG(t-t_k) > 0$ for $t\ge t_K$ by Theorem~\ref{thm:positivite}. Therefore, the function $f(t)=\sum_{k=1}^K x_kG(t+t_K-t_k)=\sum_{i=1}^n\mu_i e^{-\rho_it}$
is positive on $\R_+^*$. Naturally, one may wonder if this approach may lead to a necessary and sufficient condition. This is left for further research. 
\end{remark}

\subsection{Characterization with the resolvent of the first kind}

We now consider a nonnegative measurable kernel $G:\R_+ \to \R_+$ that is locally integrable and note $G\in L^1_{loc}(\R_+,\R_+)$. The resolvent of the first kind of~$G$ is a signed Borel measure~$R$ on~$\R_+$ with locally bounded total variation (i.e. $|R|([0,t])<\infty$ for all $t>0$) such that 
\begin{equation}\label{def_resolvent}
  \forall  t\ge 0, \int_{[0,t]}G(t-s)R(ds)=1, 
\end{equation}  
see Gripenberg et al.~\cite[Definition 5.5.1]{Gripenberg}. Such a resolvent does not necessarily exists, but it is necessarily unique by~\cite[Theorem 5.5.2]{Gripenberg}. When $G$ is nonincreasing and not identically zero, the resolvent~$R$ exists and is a nonnegative measure~\cite[Theorem 5.5.5]{Gripenberg}. Note that kernels are often used to describe phenomena that fade away along the time, and therefore nonincreasing kernels are of practical relevance. 

\begin{definition}\label{def_nonincr}
  A $\R$-valued measure~$M$ on~$\R_+$ with locally bounded total variation  is said to be nonincreasing if for any $0\le h_1\le h_2$, $M(h_1+ds)-M(h_2+ds)$ is a nonnegative measure. 
\end{definition}
\noindent Here, we note $M(h+ds)$ the pushforward signed  measure obtained with the map $s\mapsto s-h$, i.e. $\int f(s)M(h+ds)=\int f (s-h) M(ds)$ for $f$ bounded measurable with compact support.

The property of having a nonincreasing resolvent has been put in evidence recently by Abi Jaber et al.~\cite[Theorem 3.6]{AJLP} to get the nonnegativity of some Stochastic Volterra Equations. However, this property appears as a sufficient condition to guarantee the nonnegativity. By the next theorem, we see that this is a necessary and sufficient condition for a nonincreasing kernel to preserve nonnegativity.

\begin{theorem}\label{thm_carac_np}
Let $G :\R_+ \to \R_+$  be a  right-continuous kernel with $G(0)>0$. 

\begin{enumerate}
  \item  Let us assume that $G$ has a resolvent of the first kind~$R$, and that $R$ is nonincreasing. Then, $G$ preserves nonnegativity. 
  \item Let us assume that $G$ is nonincreasing. The kernel $G$ preserves nonnegativity if, and only if its resolvent $R$ is nonincreasing. Besides, we have in this case 
$$R(ds)=\frac{1}{G(0)}\delta_0(ds)+r(s)ds,$$
where $r:\R^*_+\to \R_+$ is a locally integrable nonincreasing function. 
\end{enumerate}
\end{theorem}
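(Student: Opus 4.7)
The plan is to prove (1) and (2) in sequence, with (2) invoking (1) for the easier direction. First, the hypothesis ``$R$ nonincreasing'' combined with $R(\{0\})G(0) = 1$ (forced by $R*G = 1$ at $t = 0$) yields the decomposition $R(ds) = \frac{1}{G(0)}\delta_0(ds) + r(s)\,ds$ with $r:(0,\infty)\to\R_+$ locally integrable and nonincreasing, as a direct consequence of the definition of a nonincreasing measure on $\R_+$ (singular parts off zero are ruled out by comparing shifts).

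For part (1), the cornerstone is the identity
\[
F(t) := \sum_{k:\,t_k\le t} x_k \;=\; (R*X)(t) \;=\; \frac{X(t)}{G(0)} + \int_0^t r(v)\,X(t-v)\,dv,
\]
obtained from $R*X = R*(G*dF) = (R*G)*dF = \mathbf{1}*dF = F$ with $dF = \sum_k x_k \delta_{t_k}$. I would induct on $k$ to show $X\ge 0$ on $[0, t_{k+1})$. Assuming $X\ge 0$ on $[0, t_k]$, and using that $F$ is constant on $[t_k, t_{k+1})$, substitution and splitting of the convolution yield the Volterra equation (setting $s = t - t_k$, $\tilde X(s) := X(t_k + s)$)
\[
\tilde X(s) + G(0)\int_0^s r(s-u)\tilde X(u)\,du = \tilde A(s), \qquad s\in [0,\, t_{k+1}-t_k),
\]
with $\tilde A(s) := X(t_k) + G(0)\int_0^{t_k}[r(t_k-u) - r(t_k+s-u)]X(u)\,du$ nondecreasing in $s$ and bounded below by $X(t_k) \ge 0$. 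The structural key is that $\phi := G/G(0)$ itself solves $(I + G(0)\,r*)\phi = 1$ (this is $R*G = 1$ rewritten), so the Volterra resolvent kernel $\tilde g$ of $g := G(0)r$ satisfies $T(s) := \int_0^s \tilde g(v)\,dv = 1 - \phi(s) \in [0,1]$. Integration by parts in $\tilde X = \tilde A - \tilde g * \tilde A$ then produces the manifestly nonnegative representation
\[
\tilde X(s) = \tilde A(0)\,\phi(s) + \int_0^s \phi(s-u)\,d\tilde A(u) \;\ge\; 0,
\]
since $\tilde A(0)\ge 0$, $\phi \ge 0$, and $d\tilde A$ is a nonnegative Lebesgue--Stieltjes measure.

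For part (2), the direction ``$R$ nonincreasing implies $G$ preserves nonnegativity'' follows at once from (1). For the converse, assume $G$ is nonincreasing and preserves nonnegativity. Gripenberg's Theorem 5.5.5 gives a nonnegative resolvent $R$ with $R(\{0\}) = 1/G(0)$, so $R = \frac{1}{G(0)}\delta_0 + \tilde R$ with $\tilde R$ a nonnegative measure on $(0,\infty)$. To produce the nonincreasing density $r$, I would discretize via the worst-case strategies of Proposition~\ref{prop_pos_gen}: on the grid $t_j = (j-1)h$ with $x_0 = 1$, the sequence $\tilde x_j$ forcing each constraint to be an equality satisfies $\tilde x_j \le 0$ (from the proof of Proposition~\ref{prop_pos_gen}), and $-\sum_{j\ge 1}\tilde x_j \delta_{t_j}$ approximates $R$ as $h\to 0$ since it solves the discrete version of $G*R = 1$ which characterizes $R$ in the limit. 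Identifying the absolutely continuous part recovers $r\ge 0$ as the limiting density; its nonincreasing character is obtained by reapplying Proposition~\ref{prop_pos_gen} to the problem shifted by one time step and exploiting that $G$ is nonincreasing to compare successive corrections.

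The principal obstacle is this last monotonicity step in (2): Proposition~\ref{prop_pos_gen} alone yields only $\tilde x_j \le 0$ and not nonincreasingness of $|\tilde x_j|$ in $j$, so a refined shift-and-compare argument exploiting the monotonicity of $G$ is required. The proof of (1), by contrast, is self-contained once the Volterra resolvent is identified with $\phi = G/G(0)$, which makes the nonnegativity automatic via the integration-by-parts formula.
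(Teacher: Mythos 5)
Your proof of part~(1) is correct but follows a genuinely different route from the paper, while your sketch of part~(2) has a real gap that you have correctly identified but not filled.

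\textbf{Part (1).} The paper works with the measure $R$ directly, without ever decomposing it: it defines the nonnegative nondecreasing function $\varphi(h)=\int_{[0,t_K]}\psi_K(t_K-s)\,[R(ds)-R(h+ds)]$, proves via Fubini and the resolvent identity that $\int_{[0,h]}\varphi(v)G(h-v)\,dv=\int_0^h\psi_K(t_K+v')\,dv'$, notes the left side is nondecreasing in $h$, and concludes by taking a right derivative (this is where right-continuity of $G$ enters). You instead pass through the decomposition $R=\frac1{G(0)}\delta_0+r\,ds$ with $r$ nonincreasing, identify the Volterra resolvent of the second kind of $G(0)r$ with $-\phi'$ where $\phi=G/G(0)$ (via the translation of $R*G=\mathbf 1$), and obtain the manifestly nonnegative representation $\tilde X(s)=\tilde A(0)\phi(s)+\int_0^s\phi(s-w)\,d\tilde A(w)$. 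Both arguments are correct; yours gives a more explicit formula for $\psi_K$ on $[t_k,t_{k+1})$ at the cost of an extra reduction. That reduction, however, deserves more than the phrase ``direct consequence'': that a nonincreasing measure of locally bounded variation is absolutely continuous away from $0$ requires an argument. No atoms off zero is easy, but excluding a singular-continuous part uses that $h\mapsto M((a_0,a_0+h])$ is subadditive (from the shift monotonicity), hence $\lim_{h\to0^+}M((a_0,a_0+h])/h=\sup_h M((a_0,a_0+h])/h$; Lebesgue differentiation then forces this quantity to be finite for all $a_0>0$, giving a Lipschitz (hence absolutely continuous) cumulative function on $(a_0,\infty)$. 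You should spell this out, since the paper only establishes the decomposition in part~(2) under the extra hypothesis that $G$ is nonincreasing, and by a different (discrete) method. A small side remark: your assertion that $T=1-\phi\in[0,1]$ uses $G\le G(0)$, which is not assumed in part~(1); fortunately your final formula $\tilde X=\tilde A(0)\phi+\phi*d\tilde A$ only needs $\phi\ge0$, so nothing breaks, but the remark about $T\in[0,1]$ should be dropped.

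\textbf{Part (2).} Your outline (discretize on a uniform grid, identify the weak limit of $R^n=\sum_{k\ge0}x^n_k\delta_{k/n}$ with $R$, recover $r$ as the density) matches the paper. The genuine gap is exactly the one you flag: establishing that $k\mapsto x^n_k$ is nonincreasing. Proposition~\ref{prop_pos_gen} only gives $x^n_k\ge0$, and a ``shift-and-compare'' does not close the gap: shifting the grid by one step and subtracting gives a sequence $\delta_k:=x^n_k-x^n_{k+1}$ whose partial convolution sums $\sum_{j\le k}\delta_jG((k-j)/n)=G((k+1)/n)$ are nonnegative, but the nonnegativity-preserving property propagates nonnegativity of \emph{partial sums}, not of the individual $\delta_k$. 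The paper closes this by a computation specific to the functions $G_l$: it proves by induction, using the defining recursion~\eqref{def_Gl}, that $x^n_k-x^n_{k+1}=G_{k+1}(1/n,\dots,1/n)$, and then invokes Theorem~\ref{thm_char_pos} (not Proposition~\ref{prop_pos_gen}) to conclude $G_{k+1}\ge0$. The monotonicity of $(x^n_k)_k$ is then used in two places you would need it: to get the uniform tightness bound $R^n((t_1,t_2))\lesssim(t_2-t_1)/(\epsilon G(\epsilon))$ that yields absolute continuity of the limit, and to pass the nonincreasing property to $r$ by testing against shifted nonnegative continuous functions. Without the identity $x^n_k-x^n_{k+1}=G_{k+1}(1/n,\dots,1/n)$, or an equivalent argument, the proof of the converse direction of~(2) is incomplete.
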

\begin{remark}
Let us note that if $G:\R_+\to \R_+$ is nonincreasing, the set $Disc=\{ t\ge 0, \lim_{s\to t, s<t} G(\max(s,0))>\lim_{s\to t, s>t} G(s)\}$ is countable. Therefore, there exists a right-continuous modification of~$G$ that coincides with~$G$ on $\R_+ \setminus Disc$. This modification has then the same Laplace transform as~$G$ and thus the same resolvent, see~\cite[Theorem 5.5.5]{Gripenberg}. 
\end{remark}

\begin{remark}
When $G:\R_+\to \R_+$ is completely monotone, it has a resolvent that is the sum of a point mass at zero and a completely monotone function~\cite[Theorem 5.5.4]{Gripenberg}. Thus, Theorem~\ref{thm_carac_np} gives also that completely monotone kernels preserves nonnegativity. Theorem~\ref{thm:positivite} is however a bit more precise for those kernels and give strict positivity in~\eqref{posit} when the kernel is not a single exponential. Besides, Theorem~\ref{thm:positivite} also works for kernels $G(t)=\int_{\R} e^{-\rho t} \mu(d\rho)$ with $\mu$ being a finite measure on~$\R$ (not only $\R_+$). 
\end{remark}

\begin{proof}
 (1) We assume that the resolvent $R$ of~$G$ exists and is nonincreasing. We prove by induction on $K\in \N^*$ that if 
   $x_1,\dots,x_K \in \R$ and $0\le t_1< \dots<t_K$ are such that
  $$\forall k \in \{1, \dots, K\},\ \sum_{k'=1}^k x_{k'}G(t_k-t_{k'})\ge 0, $$
  then, we have $ \forall t \ge 0,\ \psi_K(t)\ge 0$ with 
$ \psi_K(t)= \sum_{k :  t_k \le t} x_k G(t-t_k) \ge 0 $. For $K=1$, this is obvious. Let us assume $K\ge 2$ and that the result holds true for any $k\le K-1$. Thus, the function $\psi_K$ is nonnegative on~$[0,t_K]$. Let $t\ge t_K$. We have 
$$\int_{[0,t]} \psi_K(t-s) R(ds)=\sum_{k=1}^K x_k \int_{[0,t-t_k]}G(t-t_k-s)R(ds)= \sum_{k=1}^K x_k,$$
by using~\eqref{def_resolvent}.
Therefore, we have $\int_{[0,t_K+h]} \psi_K(t_K+h-s) R(ds)=\int_{[0,t_K]} \psi_K(t_K-s) R(ds)$ for any $h\ge 0$. By a change of variable, we get
$$\int_{[0,t_K+h]} \psi_K(t_K+h-s) R(ds)=\int_{[0,h)} \psi_K(t_K+h-s) R(ds)+\int_{[0,t_K]} \psi_K(t_K-s) R(h+ds),$$
and define
$$\varphi(h):=\int_{[0,h)} \psi_K(t_K+h-s) R(ds)=\int_{[0,t_K]} \psi_K(t_K-s) [R(ds)-R(h+ds)].$$
This function is nonnegative and nondecreasing since $\psi_K$ is nonnegative on $[0,t_K]$ and $R$ is nonincreasing.
We have by using Fubini theorem and a change of variable:
\begin{align*}
\int_{[0,h]} \varphi(v)G(h-v) dv&=\int_{[0,h]}\int_{[0,h]} \mathbf{1}_{s<v}G\left((h-s)-(v-s)\right)\psi_K(t_K+v-s) R(ds) dv  \\
&= \int \int \mathbf{1}_{0<v'\le h-s}\mathbf{1}_{0\le s\le h}G\left((h-s)-v'\right)\psi_K(t_K+v') R(ds) dv' \\
&=\int \int \mathbf{1}_{0<v'\le h}\mathbf{1}_{0\le s\le h-v'} G\left((h-v')-s\right)\psi_K(t_K+v') R(ds) dv'\\
&=\int_0^h \psi_K(t_K+v')dv',
\end{align*} 
where the last equality comes from the resolvent property. We note that $v\mapsto \psi_K(t_K+v)$ is right-continuous since  $G$ is right-continuous. Besides, since $G$ is nonnegative and $\varphi$ is nonnegative and nondecreasing, we get that $h\mapsto \int_{[0,h]} \varphi(v)G(h-v) dv$ is nondecreasing (indeed, we have $\int_{[0,h+\delta]} \varphi(v)G(h+\delta-v) dv \ge \int_{[0,h]} \varphi(v+\delta)G(h-v) dv\ge  \int_{[0,h]} \varphi(v)G(h-v) dv$ for $\delta\ge 0$). This gives that $\psi_K(t_K+h)$ is nonnegative for all $h>0$ by taking the right-derivative.

(2) We now assume that $G:\R_+\to \R_+$ is nonincreasing. It thus has a resolvent of the first kind~$R$ by~\cite[Theorem 5.5.5]{Gripenberg}. The sufficient condition has been proven above (part~(1)). We now assume that $G$ preserves nonnegativity, and show that $R$ is nonincreasing. Without loss of generality, we can assume that $G(0)=1$.  For $n\in \N^*$, 
we consider the time grid $\{i/n, i \in \N\}$ and define $x^n_0=1$ and for $k\ge 1$,
$$ x^n_{k}=1-\sum_{i=0}^{k-1}x^n_iG\left( \frac{k-i}n\right),$$
so that we have $\sum_{i=0}^{k}x^n_iG\left( \frac{k-i}n\right)=1$ for all $k\in \N$. 
We claim that $(x^n_k)_{k\ge 0}$ is a nonnegative nonincreasing sequence. We prove that $x^n_k\ge 0$ by induction on $k$. This is true for $k=0$. If $x^n_i\ge 0$ for $0\le i\le k-1$, then we have $\sum_{i=0}^{k-1}x^n_iG\left( \frac{k-i}n\right)\le \sum_{i=0}^{k-1}x^n_iG\left( \frac{k-1-i}n\right)=1$ since $G$ is nonincreasing, which proves that $x^n_k\ge 0$. We now show by induction that $x^n_k-x^n_{k+1}=G_{k+1}\left(\frac 1n,\dots, \frac 1n \right)$, where $G_k$ is defined by~\eqref{def_Gl}. We have $x^n_0-x^n_1=1-(1-G(1/n))=G(1/n)$, so the claim is true for $k=0$. We now consider $k\in \N^*$ and suppose that the property is true for all $i\le k-1$. Then, we have 
\begin{align*}
  x^n_k-x^n_{k+1}&=\sum_{i=0}^{k} x^n_iG\left( \frac{k+1-i}n\right) -\sum_{i=0}^{k-1}x^n_iG\left( \frac{k-i}n\right)\\
&=G\left( \frac{k+1}n\right)+\sum_{i=0}^{k-1}(x^n_{i+1}-x^n_i)G\left( \frac{k-i}n\right)\\&=G\left( \frac{k+1}n\right)-\sum_{i=0}^{k-1}G_{i+1}\left(\frac 1n,\dots, \frac 1n \right) G\left( \frac{k-i}n\right),
\end{align*}  
by using the induction hypothesis. 
On the other hand, we get from~\eqref{def_Gl} that
\begin{align*}
  G_{k+1}\left(\frac 1n,\dots, \frac 1n \right)&=G_{k}\left(\frac 1n,\dots, \frac 1n,\frac 2n \right)-G\left( \frac 1n \right)G_{k}\left(\frac 1n,\dots, \frac 1n \right)\\
&  =G_{k-1}\left(\frac 1n,\dots, \frac 1n,\frac 3n \right)-G\left( \frac 2n \right)G_{k-1}\left(\frac 1n,\dots, \frac 1n \right)-G\left( \frac 1n \right)G_{k}\left(\frac 1n,\dots, \frac 1n \right)\\
&  =G\left( \frac{k+1}n\right)-\sum_{i=0}^{k-1}G_{i+1}\left(\frac 1n,\dots, \frac 1n \right) G\left( \frac{k-i}n\right),
\end{align*}
by iteration. Therefore, $x^n_k-x^n_{k+1}=G_{k+1}\left(\frac 1n,\dots, \frac 1n \right)$, which shows that the sequence is nonincreasing by Theorem~\ref{thm_char_pos}. 

We now define the measure $R^n(ds)=\sum_{k\ge 0} x^n_k \delta_{\frac kn}(ds)$.
For $t>0$, we have $\int_{[0,t]}G(t-s)R^n(ds)=\sum_{i=0}^{\lfloor nt \rfloor} G(t-\frac in)x^n_i$. Since $G$ is nonincreasing, we get
\begin{align}\label{intermed}1-x^n_{\lfloor nt \rfloor+1} &=\sum_{i=0}^{\lfloor nt \rfloor} G\left(\frac {\lfloor nt \rfloor +1-i}n\right)x^n_i \\
  &\le \int_{[0,t]}G(t-s)R^n(ds) \le \sum_{i=0}^{\lfloor nt \rfloor} G\left(\frac {\lfloor nt \rfloor -i}n\right)x^n_i=1.\notag
\end{align} 
By right continuity at~$0$, we can find $\epsilon>0$ arbitrary small such that $G(\epsilon)>0$ and get from the last inequality $1\ge G(\epsilon)R^n([0,\epsilon])\ge G(\epsilon)\lfloor n \epsilon  \rfloor x^n_{\lfloor n \epsilon  \rfloor}$. This gives on the one hand that $x^n_{[n\epsilon]}\to_{n\to \infty} 0$ and thus  $x^n_{[nt]}\to_{n\to \infty} 0$ for any $t>0$  since $(x^n_k)_{k\ge 0}$ is nonnegative nonincreasing. We get then 
\begin{equation}\label{conv_convol}
   \int_{[0,t]}G(t-s)R^n(ds) \underset{n \to \infty}\to 1.
\end{equation}
On the other hand, we have that for any $t\ge\epsilon$, 
$$R^n([0,t])\le R^n([0,\epsilon])+\sum_{i=\lfloor n \epsilon  \rfloor +1}^{\lfloor n t  \rfloor} x^n_i \le \frac 1 {G(\epsilon)}+ (\lfloor n t \rfloor- \lfloor n \epsilon  \rfloor) x_{\lfloor n \epsilon  \rfloor}\le \frac 1 {G(\epsilon)}\left(1+\frac{\lfloor n t \rfloor- \lfloor n \epsilon  \rfloor}{ \lfloor n \epsilon  \rfloor}\right),$$
is uniformly bounded in~$n$. Therefore, by using Cantor's diagonal argument, we can find a subsequence $R^{\varphi(n)}$ that weakly converges to a Borel measure $R^\infty$ on $\R_+$ for all intervals $[0,m]$, $m\in \N^*$. We have $1= \lim_{n\to \infty}R^{\varphi(n)}(\{0\}) \le R^\infty(\{0\})\le R^\infty([0,\epsilon))\le  \liminf_{n\to \infty}R^{\varphi(n)}([0,\epsilon))\le \frac 1{G(\epsilon)}$ and thus $R^\infty(\{0\})=1$ since $\epsilon$ is arbitrary small and $G(0+)=1$. Besides, 
using the same argument as above, we get for $t_2>t_1\ge\epsilon$,
$$R^\infty((t_1,t_2))\le \liminf_{n\to \infty}R^{\varphi(n)}((t_1,t_2))\le \frac{1}{G(\epsilon)} \liminf_{n\to \infty} \frac{\lfloor n t_2 \rfloor- \lfloor n t_1  \rfloor}{ \lfloor n \epsilon  \rfloor}=\frac{t_2-t_1}{\epsilon G(\epsilon)}.$$ 
This shows that $R^\infty$ is absolutely continuous with respect to the Lebesgue measure on $[\epsilon,+\infty)$, for any $\epsilon>0$. By the Radon-Nikodym theorem, this gives the existence of a nonnegative locally integrable function $r:\R_+^* \to \R$ such that 
$R^\infty(ds)=\delta_0(ds)+r(s)ds$.
Since $G$ is bounded, continuous at~$0$ and with a countable set of discontinuities, we get from~\eqref{conv_convol} that $\int_{[0,t]}G(t-s)R^\infty(ds)=1$ for all $t\ge 0$. Therefore, $R^\infty=R$ by~\cite[Theorem 5.5.2]{Gripenberg}. It remains to show that $r$ is nonincreasing. Let $\psi:\R_+\to\R_+$ be continuous with compact support. 
Since $(x^n_i)_{i\ge 0}$ is nonincreasing, we get $\int_{t^n_1}^\infty \psi(s-t^n_1)R^n(ds)=\sum_{i=0}^\infty \psi(i/n) x^n_{i+\lfloor nt_1 \rfloor}\ge\sum_{i=0}^\infty \psi(i/n) x^n_{i+\lfloor nt_2 \rfloor}= \int_{t^n_2}^\infty \psi(s-t^n_2)R^n(ds)$ for $0<t_1\le t_2$ and $t^n_j=\frac{\lfloor n t_j \rfloor}n$. Letting $n\to \infty$, this leads to $\int_0^\infty \psi(s)( r(t_1+s)-r(t_2+s))ds\ge 0$ and thus $ r(t_1+s)-r(t_2+s)$ is $ds$-a.e. nonnegative. 
\end{proof}

\begin{example}
We give here examples of nonnegativity preserving kernels that are not completely monotone. The kernel $G(t)=\frac{1}5\left(2+(3 \cos(t)+\sin(t) )e^{-2t}\right)$ is nonnegative and has the resolvent $R(ds)=\delta_0(ds) +(2e^{-s}-e^{-2s})ds$, this can be checked by using their Laplace transform. Since $s\mapsto 2e^{-s}-e^{-2s}$ is nonincreasing, this gives by Theorem~\ref{thm_carac_np} that $G$ preserves nonnegativity. We note that $G'(t)=-(\cos(t)+\sin(t))e^{-2t}$, and therefore $G$ is not monotone. The kernel $\tilde{G}(t)=e^{-5t}G(t)$ also preserves nonnegativity by Corollary~\ref{cor_expoinvariance}. It is not completely monotone and nonincreasing since  $\tilde{G}'(t)=-\left(2+(2\sin(t)+4\cos(t))e^{-2t}\right)e^{-5t}\le 0$ for $t\ge 0$. 
\end{example}


\section{Stochastic Volterra Equations on a closed convex domain}\label{Sec_SVEs}

\subsection{An approximation of Stochastic Volterra Equations by splitting}\label{Subsec_SVEapprox}

Let $d\in \N^*$, $W$ be a $d$-dimensional Brownian motion and $\cF=(\cF_t)_{t\ge 0}$ the associated filtration. Let $\cM_d(\R)$ be the set of real square $d\times d$ matrices. We consider a continuous kernel and $G:\R_+ \to \R$ and we note
\begin{equation}\label{modulus}
  \omega_{G,T}(\delta)=\max_{s,t \in [0,T]: |s-t|\le \delta}|G(s)-G(t)|
\end{equation} its modulus of continuity on~$[0,T]$. Let us note that we do not need to assume that $G$ is nonnegative for the approximation results. We consider coefficients $b:\R^d\to \R^d$ and $\sigma:\R^d \to \cM_d(\R)$  that are Lipschitz continuous:
\begin{equation}\label{Lip}
  |b(x)-b(y)|+|\sigma(x)-\sigma(y)|\le L |x-y|,  \tag{{\bf Lip}}
\end{equation}
which gives in particular the sublinearity property
\begin{align}\label{SubLin}
  |b(x)|^2+ |\sigma(x)|^2 \le L'(1+|x|^2)  \tag{{\bf SubLin}} ,
\end{align}
with $L'=\max(2|b(0)|^2+2|\sigma(0)|^2,4 L^2)$. We consider the following Stochastic Volterra Equation
\begin{equation}\label{SVE}
  X_t=x_0+\int_0^tG(t-s)b(X_s)ds +\int_0^t G(t-s) \sigma(X_s)dW_s, \ t\ge 0,
\end{equation}
that admits by~\cite[Theorem 3.A]{BeMi} a unique solution $X$ that is adapted to~$\cF$ and such that $\sup_{t\in[0,T]}\E[|X_t|^2]<\infty$ for all $T>0$. 

Let $T>0$, $N\in\N^*$ and $t_k=kT/N$, for $k\in\{0,\dots,N\}$. We consider the following approximation scheme~$\hat{X}$ that consists in splitting, on each time step, the kernel convolution and the diffusion. Thus, we define $\hat{X}_t=x_0$ for $t\in [t_0,t_1)$ and $(\xi_{t},t\in[t_0,t_1))$ as the unique strong solution of the following Stochastic Differential Equation (SDE)
  \begin{equation}\label{SDE_first_interval} \xi_{t}=x_0+\int_{t_0}^t G(0) b(\xi_s)ds+ \int_{t_0}^t G(0) \sigma(\xi_s)dW_s, t\in[t_0,t_1),
  \end{equation}
    and we set $\hat{X}_{t_1}= \xi_{t_1-}=x_0+( \xi_{t_1-}-x_0)$. Then, we define
    $$ \hat{X}_{t}= x_0+\frac{\xi_{t_1-}-x_0}{G(0)}G(t-t_1),\ t \in [t_1,t_2).$$
We note that $\hat{X}_{t_2-}=x_0+\frac{\xi_{t_1-}-x_0}{G(0)}G(t_2-t_1) $  is $\mathcal{F}_{t_1}$-adapted and consider $(\xi_{t},t\in[t_1,t_2))$, the unique strong solution of the following SDE
  $$ \xi_{t}=\hat{X}_{t_2-}+\int_{t_1}^t G(0) b(\xi_s)ds+ \int_{t_1}^t G(0) \sigma(\xi_s)dW_s, t\in[t_1,t_2).$$
    We then set $\hat{X}_{t_2}= \xi_{t_2-}=x_0+\frac{\xi_{t_1-}-x_0}{G(0)}G(t_2-t_1)+(\xi_{t_2}-\hat{X}_{t_2-})$. 
      
Suppose that we have constructed $(\hat{X}_{t},t\in [t_0,t_k])$, for some $k<N$. Then, we define $(\hat{X}_{t},t\in (t_k,t_{k+1}])$ as follows. We first set 
\begin{equation*}
  \hat{X}_{t}= x_0+\sum_{j=1}^k\frac{\hat{X}_{t_{j}}-\hat{X}_{t_j-}}{G(0)}G(t-t_j),\ t \in (t_k,t_{k+1})
\end{equation*}
and have $\hat{X}_{t_{k+1}-}=x_0+\sum_{j=1}^k\frac{\hat{X}_{t_{j}}-\hat{X}_{t_j-}}{G(0)}G(t_{k+1}-t_j)$ that is $\cF_{t_k}$-measurable. Then, we consider $(\xi_{t},t\in[t_k,t_{k+1}))$, the unique strong solution of the following SDE
\begin{equation}\label{def_xit} \xi_{t}=\hat{X}_{t_{k+1}-}+\int_{t_{k}}^t G(0) b(\xi_s)ds+ \int_{t_{k}}^t G(0) \sigma(\xi_s)dW_s, t\in[t_{k},t_{k+1}),
\end{equation}
and define $\hat{X}_{t_{k+1}}=\xi_{t_{k+1}-}$. 

We have thus defined two processes $(\hat{X}_t,t\in [0,T])$ and $(\xi_t,t\in[0,T])$ that are piecewise continuous and càdlàg. We note that 
$$ \frac{\hat{X}_{t_{k}}-\hat{X}_{t_k-}}{G(0)}=\int_{t_{k-1}}^{t_k}[b(\xi_s)ds+\sigma(\xi_s)dW_s],$$
so that we can rewrite $\hat{X}_t$ as follows for $t \in [0,T]$:
\begin{align}
  \hat{X}_{t}&=x_0+\sum_{k: t_k\le t} G(t-t_k) \int_{t_{k-1}}^{t_{k}}[b(\xi_s)ds+ \sigma(\xi_s)dW_s] \notag\\&=x_0+ \int_0^{t_{\eta(t)}}G(t-t_{\eta(s)+1})[b(\xi_s)ds+ \sigma(\xi_s)dW_s], \label{hat_X}
\end{align}
where $\eta(t)=k$ for $t\in[t_k,t_{k+1})$ and $k\in \N$, so that $t_{\eta(t)}\le t <t_{\eta(t)+1}$.



\begin{lemma}\label{lem_apriori_estimate}
  Let $T>0$. We assume~\eqref{SubLin} and that $G:\R_+\to \R$ is a continuous function with modulus of continuity~\eqref{modulus} on~[0,T]. 
  There exists a constant $C\in\R_+^*$ that only  depends on  $T$, $ \max_{t\in [0,T]} |G(t)|$, $L'$ and $|x_0|$ such that \blue{for all $N\in \N^*$,} $$\sup_{t\in [0,T]}\E[|\xi_t|^2+|\hat{X}_t|^2]\le C \text{ and }\sup_{t\in [0,T]}\E[|\xi_t-\hat{X}_t|^2]\le C \left(\frac{T}{N} +\omega^2_{G,T}\left( \frac T N\right)\right).$$
\end{lemma}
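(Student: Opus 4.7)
The plan is to first establish the $L^2$ bound on $\xi$ and $\hat X$ jointly via a Gronwall argument, and then deduce the closeness estimate by splitting $\xi_t-\hat X_t$ into a ``kernel-regularity'' contribution and a ``local SDE'' contribution. Let me write $G_T:=\max_{t\in[0,T]}|G(t)|$, and use the representation~\eqref{hat_X}
\[
\hat X_t=x_0+\int_0^{t_{\eta(t)}}G(t-t_{\eta(s)+1})\bigl[b(\xi_s)\,ds+\sigma(\xi_s)\,dW_s\bigr].
\]

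\textbf{Step 1 (moment bound).} By Cauchy--Schwarz on the drift integral, Itô's isometry on the stochastic integral, and \eqref{SubLin}, one gets for every $t\in[0,T]$
\[
\E[|\hat X_t|^2]\le 3|x_0|^2+3G_T^2(T+1)L'\int_0^{t_{\eta(t)}}\bigl(1+\E[|\xi_s|^2]\bigr)ds.
\]
On the interval $[t_k,t_{k+1})$, the SDE~\eqref{def_xit} together with \eqref{SubLin} yields
\[
\E[|\xi_t|^2]\le 3\E[|\hat X_{t_{k+1}-}|^2]+3G(0)^2(T+1)L'\int_{t_k}^{t}\bigl(1+\E[|\xi_s|^2]\bigr)ds.
\]
Plugging the first estimate (at time $t_{k+1}-$, which only involves $\xi_s$ for $s\le t_k$) into the second gives an inequality of the form
\[
\E[|\xi_t|^2]\le A+B\int_0^t\E[|\xi_s|^2]\,ds,\qquad t\in[0,T],
\]
with $A,B$ depending only on $T$, $G_T$, $L'$ and $|x_0|$. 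Gronwall's lemma bounds $\sup_{t\in[0,T]}\E[|\xi_t|^2]$ by a constant $C$ of the announced type, and then feeding this back into the first estimate bounds $\sup_{t\in[0,T]}\E[|\hat X_t|^2]$ as well.

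\textbf{Step 2 (splitting the error).} For $t\in[t_k,t_{k+1})$ write $\xi_t-\hat X_t=\bigl(\hat X_{t_{k+1}-}-\hat X_t\bigr)+\bigl(\xi_t-\hat X_{t_{k+1}-}\bigr)$. From~\eqref{hat_X} one has
\[
\hat X_{t_{k+1}-}-\hat X_t=\int_0^{t_k}\bigl[G(t_{k+1}-t_{\eta(s)+1})-G(t-t_{\eta(s)+1})\bigr]\bigl[b(\xi_s)\,ds+\sigma(\xi_s)\,dW_s\bigr],
\]
and the integrand of each difference is bounded in absolute value by $\omega_{G,T}(T/N)$ since $|t_{k+1}-t|\le T/N$ and all the arguments of $G$ stay in $[0,T]$. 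Using Cauchy--Schwarz and Itô isometry as above together with Step~1, this piece is bounded in $L^2$ by $C\,\omega_{G,T}^2(T/N)$.

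\textbf{Step 3 (local SDE piece).} From~\eqref{def_xit},
\[
\xi_t-\hat X_{t_{k+1}-}=\int_{t_k}^t G(0)b(\xi_s)\,ds+\int_{t_k}^t G(0)\sigma(\xi_s)\,dW_s.
\]
Cauchy--Schwarz, Itô's isometry, \eqref{SubLin} and Step~1 yield
\[
\E\bigl[|\xi_t-\hat X_{t_{k+1}-}|^2\bigr]\le 2G(0)^2 L'\bigl((T/N)^2+T/N\bigr)\bigl(1+C\bigr)\le C'\,T/N.
\]
Combining Steps~2 and~3 gives the announced bound on $\E[|\xi_t-\hat X_t|^2]$.

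The main obstacle is the coupling in Step~1: because $\hat X$ drives $\xi$ (through the terminal value of each SDE leg) and $\xi$ in turn drives $\hat X$ (through the kernel convolution), one must be careful to set up a single Gronwall inequality in $\E[|\xi_t|^2]$ alone. The observation that $\hat X_{t_{k+1}-}$ is $\mathcal{F}_{t_k}$-measurable and involves $\xi_s$ only for $s\le t_k$ makes this closure possible. Steps~2 and~3 are then essentially routine SDE estimates, the uniform modulus $\omega_{G,T}$ taking care of the kernel-regularity term.
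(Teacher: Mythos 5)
Your proof is correct, and Steps~2--3 coincide with the paper's decomposition $|\xi_t-\hat X_t|^2\le 2|\hat X_t-\hat X_{t_{k+1}-}|^2+2|\hat X_{t_{k+1}-}-\xi_t|^2$ and the subsequent Jensen/It\^o/\eqref{SubLin} estimates. Step~1, however, takes a genuinely different and cleaner route. The paper first applies Gronwall \emph{locally} on each interval $[t_k,t_{k+1})$, obtaining $\E[|\xi_t|^2]\le K(1+\E[|\hat X_{t_{k+1}-}|^2])e^{K(t-t_k)}$, explicitly notes that naively chaining these estimates over $k$ gives a bound that blows up with $N$, and therefore passes to a \emph{discrete} Gronwall inequality for the sequence $(\E[|\hat X_{t_k-}|^2])_k$ using the global representation~\eqref{hat_X}. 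You instead close a single \emph{continuous-time} Gronwall in $\E[|\xi_t|^2]$: the local SDE bound on $[t_k,t_{k+1})$ produces $3G(0)^2(T\vee1)L'\int_{t_k}^t(1+\E[|\xi_s|^2])ds$, the global bound on $\E[|\hat X_{t_{k+1}-}|^2]$ via~\eqref{hat_X} produces $9G_T^2(T\vee1)L'\int_0^{t_k}(1+\E[|\xi_s|^2])ds$, and since $G(0)\le G_T$ these merge into a single $\int_0^t$ with constants independent of $N$. This saves the detour through the discrete Gronwall lemma; the observation that $\hat X_{t_{k+1}-}$ is $\cF_{t_k}$-measurable and depends only on $(\xi_s)_{s\le t_k}$, which you stress, is indeed what makes the closure possible. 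One small point you glide over: applying Gronwall requires knowing a priori that $t\mapsto\E[|\xi_t|^2]$ is locally integrable (in fact bounded on $[0,T]$). The paper earns this by first iterating the crude local estimates over $k$ (this gives finiteness, even if the bound is $N$-dependent); you should insert the same remark before invoking Gronwall. With that one-line addition your Step~1 is fully rigorous and slightly more economical than the published argument.
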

\begin{proof}
  Let $k\in\{0,\dots,N-1\}$ and $t\in [t_k,t_{k+1})$. We get from~\eqref{def_xit} that
    $$|\xi_t|^2 \le 3 |\hat{X}_{t_{k+1}-}|^2+3G(0)^2 \left|\int_{t_k}^tb(\xi_s)ds \right|^2+3G(0)^2 \left|\int_{t_k}^t\sigma(\xi_s)dW_s \right|^2.$$
    By Jensen inequality, Itô isometry and~\eqref{SubLin}, we get
    $$\E[|\xi_t|^2|\cF_{t_k}]\le  3 |\hat{X}_{t_{k+1}-}|^2+3G(0)^2(1\vee T)L' \int_{t_k}^t (1+\E[|\xi_s|^2|\cF_{t_k}])ds.$$
    By Gronwall Lemma, this gives \begin{equation}\label{intermed_xi}
        \E[|\xi_t|^2]\le K(1+\E[|\hat{X}_{t_{k+1}-}|^2])e^{K(t-t_k)}\text{ for }t\in[t_k,t_{k+1}),
      \end{equation}
where~$K\in \R_+$ is a constant that depends on $T$, $|G(0)|$, and $L'$. We then get by iteration on~$k\in \{1,\dots,N\}$ that $\sup_{t\in[0,t_k]}\E[|\xi_t|^2]+\E[|\hat{X}_t|^2]<\infty$. However, this estimate is not sharp enough to get a bound that does not depend on~$N$. To get this bound, we apply Jensen inequality and Itô isometry to~\eqref{hat_X} for $t\to t_{k+1}$, $t<t_{k+1}$, and get
        \begin{align*}\E[|\hat{X}_{t_{k+1}-}|^2]&\le 3|x_0|^2+3\int_0^{t_{k}}G^2(t_{k+1}-t_{\eta(s)+1})[T\E[|b(\xi_s)|^2]+\E[|\sigma(\xi_s)|^2]]ds\\
          &\le 3|x_0|^2+3 (T\vee 1) L' \max_{t\in [0,T]} G^2(t)\int_0^{t_{k}} 1+\E[|\xi_s|^2]ds,
        \end{align*}
        where we have used~\eqref{SubLin} for the second inequality. We now plug the inequality~\eqref{intermed_xi} to get the existence of a constant $C$ that only depends on $T$, $ \max_{t\in [0,T]} |G(t)|$, $L'$ and $|x_0|$ such that
$$\E[|\hat{X}_{t_{k+1}-}|^2]\le C+\frac{CT}{N} \sum_{j=1}^{k}\E[|\hat{X}_{t_j-}|^2].$$
We can then apply the discrete Gronwall lemma (see e.g. Clark~\cite{Clark}) to get that $\sup_{1\le k\le N}\E[|\hat{X}_{t_k-}|^2]\le Ce^{CT}$. Combining this with~\eqref{intermed_xi}, we get the first claim.

We now prove the second claim. For $t\in[t_k,t_{k+1})$, we use the following inequality:
  $$ |\hat{X}_t-\xi_t|^2\le 2 |\hat{X}_t-\hat{X}_{t_{k+1}-}|^2 +2 |\hat{X}_{t_{k+1}-}-\xi_t|^2. $$
On the one hand, we have by~\eqref{def_xit}, Jensen inequality, Itô isometry, \blue{the growth assumption~\eqref{SubLin} and Lemma~\ref{lem_apriori_estimate}}
    \begin{align}\E[|\xi_t-\hat{X}_{t_{k+1}-}|^2]&\le 2  \int_{t_k}^t G(0)^2(T\E[|b(\xi_s)|^2]+\E[|\sigma(\xi_s)|^2]ds \notag\\
      &\le 2(T\vee 1)G(0)^2 L' \int_{t_k}^t 1+ \E[|\xi_s|^2]ds \le C\frac{T}N, \label{estim_diff}
    \end{align}
    where $C$ is a constant that only depends on  $T$, $ \max_{t\in [0,T]} |G(t)|$, $L'$ and $|x_0|$. On the other hand, we get from~\eqref{hat_X} that
  $$\hat{X}_t-\hat{X}_{t_{k+1}-}=\int_0^{t_{k}}[G(t-t_{\eta(s)+1})-G(t_{k+1}-t_{\eta(s)+1})][b(\xi_s)ds+\sigma(\xi_s)dW_s].$$
  Using again Jensen inequality, Itô isometry,  \blue{\eqref{SubLin} and Lemma~\ref{lem_apriori_estimate},} we get
  \begin{align*}
    \E[|\hat{X}_t-\hat{X}_{t_{k+1}-}|^2]&\le 2\int_0^{t_{k}}[G(t-t_{\eta(s)+1})-G(t_{k+1}-t_{\eta(s)+1})]^2( T\E[|b(\xi_s)|^2]+\E[|\sigma(\xi_s)|^2])ds\\
      &\le 2\omega_{G,T}^2(T/N) (1\vee T) L'\int_0^T( 1+\E[|\xi_s|^2])ds\le C\omega_{G,T}^2(T/N),
  \end{align*}
  for a constant $C$ that only depends on  $T$, $ \max_{t\in [0,T]} |G(t)|$, $L'$ and $|x_0|$. 
\end{proof}

\begin{prop}\label{prop_approx}
  Let us assume that $G:\R_+\to \R$ is a continuous kernel and that the coefficients $b,\sigma$ satisfy~\eqref{Lip}. Then, there exists a constant~$C$ depending on $T$, $ \max_{t\in [0,T]} |G(t)|$, $L$, $L'$ and $|x_0|$ such that
  $$\forall t \in[0,T],\  \E[|\hat{X}_t-X_t|^2]\le C \left(\frac{T}{N} +\omega^2_{G,T}\left( \frac T N\right)\right).$$
\end{prop}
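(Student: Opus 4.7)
The plan is to decompose the difference $X_t-\hat{X}_t$ into three pieces and bound each separately: a short "tail" integral over $[t_{\eta(t)},t]$, a "kernel-shift" error from replacing $G(t-s)$ by $G(t-t_{\eta(s)+1})$, and an "integrand" error from replacing $X_s$ by $\xi_s$. Together with the a priori estimates of Lemma~\ref{lem_apriori_estimate}, this feeds into a Gronwall argument that yields the claimed $O(T/N+\omega_{G,T}^2(T/N))$ bound.

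Concretely, using the representation~\eqref{hat_X} and the definition~\eqref{SVE}, I would write $X_t-\hat{X}_t = A_1(t)+A_2(t)+A_3(t)$, with
$$A_1(t)=\int_{t_{\eta(t)}}^t G(t-s)[b(X_s)ds+\sigma(X_s)dW_s],$$
$$A_2(t)=\int_0^{t_{\eta(t)}}[G(t-s)-G(t-t_{\eta(s)+1})][b(X_s)ds+\sigma(X_s)dW_s],$$
$$A_3(t)=\int_0^{t_{\eta(t)}}G(t-t_{\eta(s)+1})[(b(X_s)-b(\xi_s))ds+(\sigma(X_s)-\sigma(\xi_s))dW_s].$$
For $A_1$, Jensen's inequality and Itô's isometry, combined with the growth bound~\eqref{SubLin} and the classical a priori estimate $\sup_{s\in[0,T]}\E[|X_s|^2]<\infty$ given by~\cite[Theorem 3.A]{BeMi}, yield $\E[|A_1(t)|^2]\le CT/N$ since the integration interval has length at most $T/N$. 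For $A_2$, the key observation is that $|s-t_{\eta(s)+1}|\le T/N$, so $|G(t-s)-G(t-t_{\eta(s)+1})|\le\omega_{G,T}(T/N)$ uniformly in~$s$; the same two inequalities then give $\E[|A_2(t)|^2]\le C\,\omega_{G,T}^2(T/N)$.

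For $A_3$, the Lipschitz assumption~\eqref{Lip} combined with the uniform bound $\max_{u\in[0,T]}G(u)^2<\infty$ gives
$$\E[|A_3(t)|^2]\le C\int_0^{t}\E[|X_s-\xi_s|^2]\,ds\le 2C\int_0^t\bigl(\E[|X_s-\hat{X}_s|^2]+\E[|\hat{X}_s-\xi_s|^2]\bigr)ds,$$
and by Lemma~\ref{lem_apriori_estimate} the second summand inside the integral is already controlled by $C(T/N+\omega_{G,T}^2(T/N))$. Setting $e(t)=\E[|X_t-\hat{X}_t|^2]$ and putting the three bounds together, I obtain
$$e(t)\le C\!\left(\frac{T}{N}+\omega_{G,T}^2\!\left(\frac{T}{N}\right)\right)+C'\int_0^t e(s)\,ds,$$
so Gronwall's lemma closes the argument.

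There is no real obstacle beyond careful bookkeeping: the genuinely important step is the decomposition itself, which exploits the specific form of the scheme (in particular the choice of evaluating the kernel at $G(t-t_{\eta(s)+1})$, which is exactly what turns the scheme into an integral with a "piecewise shifted" convolution kernel) and the fact that Lemma~\ref{lem_apriori_estimate} has already absorbed the loss coming from replacing $\hat{X}_s$ by $\xi_s$ inside the integrands.
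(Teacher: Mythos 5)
Your proof is correct and follows essentially the same route as the paper: the same three-way decomposition (tail piece over $[t_{\eta(t)},t]$, kernel-shift piece controlled by $\omega_{G,T}(T/N)$, and Lipschitz piece absorbed into Gronwall via Lemma~\ref{lem_apriori_estimate}), with Jensen, Itô isometry and~\eqref{SubLin} used in the same places. The only cosmetic difference is that you keep $b(X_s),\sigma(X_s)$ in the tail and shift terms and invoke the a priori bound $\sup_{s\le T}\E[|X_s|^2]<\infty$ from~\cite[Theorem~3.A]{BeMi}, whereas the paper keeps $b(\xi_s),\sigma(\xi_s)$ there and uses the bound on $\E[|\xi_s|^2]$ from Lemma~\ref{lem_apriori_estimate}; both yield constants depending only on the stated quantities, so the two variants are interchangeable.
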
 
\begin{proof}
We get from~\eqref{SVE} and~\eqref{hat_X}:
\begin{align}
  \hat{X}_t-X_t=&
  \int_0^t G(t-s)[(b(\xi_s)-b(X_s))ds+ (\sigma(\xi_s)-\sigma(X_s))dW_s ] \notag \\
  &+\int_{0}^{t_{\eta(t)}}(G(t-t_{\eta(s)+1})-G(t-s))[b(\xi_s)ds+ \sigma(\xi_s)dW_s] \notag\\
  &- \int_{t_{\eta(t)}}^t G(t-s)[b(\xi_s)ds+ \sigma(\xi_s)dW_s].\notag
\end{align}
We deduce by using Jensen inequality, Itô isometry \blue{and~\eqref{SubLin}}
\begin{align}
  \E[|\hat{X}_t-X_t|^2]\le&6 \Bigg( \int_0^t G(t-s)^2(T+1)L^2 \E[|\xi_s-X_s|^2]ds \notag \\
  &+ \int_{0}^{t_{\eta(t)}}(G(t-t_{\eta(s)+1})-G(t-s))^2 L'(1+ \E[\xi_s^2]) ds \notag \\
  &   + \int_{t_{\eta(t)}}^t G(t-s)^2  L'(1+ \E[\xi_s^2]) ds\Bigg). \notag
\end{align}
By Lemma~\ref{lem_apriori_estimate}, there exists a constant~$C_1$ depending on $T$, $ \max_{t\in [0,T]} |G(t)|$,  $L'$ and $|x_0|$ such that 
$$\sup_{t\in [0,T]}\E[|\xi_t|^2+|\hat{X}_t|^2]\le C_1 \text{ and }\sup_{t\in [0,T]}\E[|\xi_t-\hat{X}_t|^2]\le C_1 \left(\frac{T}{N} +\omega^2_{G,T}\left( \frac T N\right)\right).$$
We get that
\begin{align*}
   \E[|\hat{X}_t-X_t|^2] &\le 6\max_{t\in [0,T]} G(t)^2L^2(T+1) \left[ 2T C_1 \left(\frac{T}{N}+2 T\omega^2_{G,T}\left( \frac T N \right)  \right) +2 \int_0^t \E[|\hat{X}_s-X_s|^2]ds \right] \\& + \omega_{G,T}^2\left( \frac T N\right) T L'(1+C_1)+\frac{T}{N} L'\max_{t\in [0,T]} G(t)^2(1+C_1).\end{align*}
Therefore, there exists a constant $C_2$ depending only on    $T$, $ \max_{t\in [0,T]} |G(t)|$, $L$,  $L'$ and $|x_0|$ such that  
$$\E[|\hat{X}_t-X_t|^2] \le C_2  \int_0^t \E[|\hat{X}_s-X_s|^2]ds +C_2\left(\frac{T}{N} +\omega_{G,T}^2(T/N)\right)$$
We conclude by using Gronwall Lemma that $\E[|\hat{X}_t-X_t|^2] \le C_2\left(\frac{T}{N} +\omega_{G,T}^2(T/N)\right)e^{C_2T}$, which gives the claim. 
\end{proof}

We conclude this paragraph by showing a comparison result between real valued SVEs.
\begin{theorem}\label{thm_comparison_result}
Let $b^1,b^2,\sigma:\R\to \R$ be Lipschitz continuous functions such that $b^1(x)\le b^2(x)$ for all $x \in \R$. Let $G:\R_+^* \to \R_+$ be continuous nonincreasing and such that there exists $\varepsilon,\eta >0$ such that
\begin{align}\label{Hyp_kernel}
&\forall T>0, \ \int_0^T G(s)^{2+\varepsilon}(u)du < \infty, \quad  \exists C_T, \forall \delta \in (0,T), \ \int_0^T (G(s+\delta)-G(s))^2 ds \le C_T \delta^\eta.
\end{align}
Let $x^1_0\le x^2_0$ and define, for $l\in \{1,2\}$,
$$X^l_t=x^l_0+\int_0^t G(t-s)b^l(X^l_s)ds+\int_0^t G(t-s)\sigma(X^l_s)dW_s, $$
where $W$ is a one-dimensional Brownian motion. 

If $G(0+)<\infty$ and $G$ preserves nonnegativity, then we have $\P(\forall t\ge 0, X^1_t\le X^2_t)=1$. The same conclusion holds when $G(0+)=+\infty$ and $G$ is completely monotone. 
\end{theorem}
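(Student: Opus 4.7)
The strategy is to prove the comparison inequality at the level of the splitting approximation of Subsection~\ref{Subsec_SVEapprox} and then pass to the limit via Proposition~\ref{prop_approx}. Assume first $G(0+)<+\infty$ and extend $G$ to $\R_+$ by $G(0):=G(0+)>0$; then $G$ is continuous, nonincreasing, and preserves nonnegativity. Fix $T>0$, $N\in\N^*$, $t_k=kT/N$, and let $(\hat{X}^{l,N},\xi^{l,N})$, $l=1,2$, be the splitting schemes built with the same Brownian motion $W$ and coefficients $(b^l,\sigma)$. The difference $Y^N_t:=\hat{X}^{2,N}_t-\hat{X}^{1,N}_t$ has the form $Y^N_t=(x^2_0-x^1_0)+\sum_{j:\,t_j\le t}G(t-t_j)\,D^N_j$, where $D^N_j\in\R$ is the normalized jump difference at $t_j$ and has no a priori sign. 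I will prove by induction on $k$ that $Y^N_{t_k}\ge 0$ for $k=1,\dots,N$. Given $Y^N_{t_1},\dots,Y^N_{t_k}\ge 0$, Proposition~\ref{prop_pos_gen} applied with $x_0=x^2_0-x^1_0\ge 0$ and the first $k$ jumps yields $Y^N_t\ge 0$ on $[0,t_{k+1})$, and in particular $\hat{X}^{1,N}_{t_{k+1}-}\le\hat{X}^{2,N}_{t_{k+1}-}$. On $[t_k,t_{k+1})$ the processes $\xi^{l,N}$ solve scalar SDEs with common Lipschitz diffusion $G(0)\sigma$, Lipschitz drifts $G(0)b^l$ satisfying $b^1\le b^2$, and ordered $\cF_{t_k}$-measurable initial values; the Ikeda--Watanabe comparison theorem thus gives $\xi^{1,N}\le\xi^{2,N}$ on this interval, hence $\hat{X}^{1,N}_{t_{k+1}}\le\hat{X}^{2,N}_{t_{k+1}}$ by passing to the left limit. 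The base case $k=1$ follows from the same SDE comparison on $[0,t_1)$ starting from $x^1_0\le x^2_0$. A final application of Proposition~\ref{prop_pos_gen} on the full set of jumps then yields $Y^N_t\ge 0$ for every $t\in[0,T]$ almost surely.

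\textbf{Passage to the limit.} Proposition~\ref{prop_approx} gives $\E[|\hat{X}^{l,N}_t-X^l_t|^2]\to 0$; extract a common subsequence $N_j\to\infty$ along which almost-sure convergence holds for every rational $t\in[0,T]$ and $l=1,2$. The pointwise inequalities pass to the limit and, by continuity of the paths of $X^l$ (from Kolmogorov's criterion under~\eqref{Hyp_kernel}), one concludes $X^1_t\le X^2_t$ for all $t\in[0,T]$ almost surely, then on $\R_+$ by letting $T\to\infty$. When $G(0+)=+\infty$ and $G$ is completely monotone, write $G(t)=\int_{(0,\infty)}e^{-\rho t}\mu(d\rho)$ and approximate by the truncated kernels $G_n(t):=\int_{(0,n]}e^{-\rho t}\mu(d\rho)$. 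Each $G_n$ is completely monotone with $G_n(0)<\infty$, so preserves nonnegativity by Theorem~\ref{thm:positivite}. Since $G_n\le G$ pointwise, $G_n$ inherits~\eqref{Hyp_kernel} with identical constants. If $X^{l,n}$ denotes the SVE with kernel $G_n$, the previous step gives $X^{1,n}_t\le X^{2,n}_t$ almost surely. A Gronwall estimate combined with $G_n\to G$ in $L^2_{\mathrm{loc}}(\R_+)$ (by dominated convergence with dominant $G$) yields $\E[|X^{l,n}_t-X^l_t|^2]\to 0$, and passing to a common almost-sure subsequence concludes.

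\textbf{Main obstacle.} The delicate point is the inductive coupling: the jump differences $D^N_j$ have no a priori sign, so a naive jump-by-jump comparison fails. One must alternate Proposition~\ref{prop_pos_gen}, which propagates grid-point positivity into continuous time between jumps, with the scalar SDE comparison, which restores positivity at each new grid point. This is exactly where the nonnegativity preserving property of~$G$ is crucial. A secondary technical point, in the completely monotone regime with $G(0+)=+\infty$, is to check that the Bernstein-truncated kernels $G_n$ satisfy the hypotheses of Proposition~\ref{prop_approx} and admit an SVE stability estimate uniform in $n$, both of which follow from the monotone domination $G_n\le G$ combined with~\eqref{Hyp_kernel}.
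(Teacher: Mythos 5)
Your proposal follows essentially the same route as the paper's proof: induction on the grid via Proposition~\ref{prop_pos_gen} alternated with a scalar SDE comparison theorem for the splitting scheme, $L^2$ convergence via Proposition~\ref{prop_approx}, and spectral truncation of the Bernstein representation for the $G(0+)=\infty$ case. The only minor variations are cosmetic: the paper verifies~\eqref{Hyp_kernel} for the truncated kernel through boundedness and a derivative bound whereas you use the (slightly slicker) monotone-domination observation $0\le G_n(s)-G_n(s+\delta)\le G(s)-G(s+\delta)$, and the paper invokes the stability estimate of Alfonsi--Kebaier directly rather than sketching the Gronwall argument, but neither change affects the structure of the argument.
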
 
\noindent \blue{The existence and uniqueness of the processes $X^1$ and $X^2$ under the assumptions of Theorem~\ref{thm_comparison_result} is ensured by Wang~\cite{Wang}. Let us note here that the fractional kernel $G_{H}(t)=\frac{t^{H-1/2}}{\Gamma(H+1/2)}$, $H\in(0,1/2)$ satisfies the assumption~\eqref{Hyp_kernel} by Richard et al.~\cite[Example 2.1]{RTY}. }

Few results exist for the comparison of Stochastic Volterra Equations. Tudor~\cite{Tudor} obtained a comparison result for more general drift coefficients but with a diffusion term $\sigma(s,x)$ that only depend on time~$s$. His result is extended by Ferreyra and Sundar~\cite{FeSu} who consider a diffusion term in $H(t)\sigma(s,x)$, but this does not include the convolution case $G(t-s)\sigma(x)$ above. Note that our approach can be straightforwardly extended to time-dependent coefficients, i.e. for 
$$X^l_t=x^l_0+\int_0^t G(t-s)b^l(s,X^l_s)ds+\int_0^t G(t-s)\sigma(s,X^l_s)dW_s, $$
but we keep time-independent coefficients for simplicity. \blue{Finally, let us note that even for Volterra (ordinary) differential equations, Theorem~\ref{thm_comparison_result} with $\sigma\equiv 0$ appears to be new. Sato~\cite[Theorem 4]{Sato_Comparison} and later Gripenberg~\cite{Gripenberg_Comparison} state general comparison results that would amount to have in our setting $b^1(x)\le b^2(x')$ for $x\le x'$, which is more restrictive and holds if $b^1\le b^2$ and one of the two functions is nondecreasing.   } 

\begin{proof}
  From Wang~\cite[Theorems 1.1 and 1.3]{Wang}, we know that the SVE solutions~$X^1$ and $X^2$ are unique and almost surely Hölder continuous. We will prove that $\P(X^1_T\le X^2_T)=1$ for any $T>0$, which implies the claim by using the almost sure continuity.

  We first consider the case where $G$ is nonincreasing, preserves nonnegativity
 and satisfies $G(0+)<\infty$.  Let $T>0$. For $N \in \N^*$, we define the corresponding approximations $\hat{X}^1$ and $\hat{X}^2$, and denote $\xi^1$ and $\xi^2$ the processes that arise in their construction, see Equation~\eqref{def_xit}. We show by induction on $k$ that $\hat{X}^1_t\le \hat{X}^2_t$ for $t\in[0,t_k]$.
  For $k=1$, we have $\hat{X}^1_t= x^1_0\le x^2_0=\hat{X}^2_t$ for $t\in[0,t_1)$ and by using the comparison results for SDEs, see e.g. Karatzas and Shreve~\cite[Proposition 2.18 p. 293]{KS}, we get that $\hat{X}^1_{t_1}=\xi^1_{t_1-}\le  \xi^2_{t_1-} =\hat{X}^2_{t_1}$. 
  Suppose that the result is true for $k\ge 1$. Then, we have
  $$\forall l \in \{1,\dots,k\}, \ \hat{X}^2_{t_l}-\hat{X}^1_{t_l}=x^2_0-x^1_0+\sum_{\ell=1}^{l}   \frac{\hat{X}^2_{t_\ell}-\hat{X}^2_{t_\ell-} -(\hat{X}^1_{t_\ell}-\hat{X}^1_{t_\ell-})}{G(0)} G(t_l-t_\ell) \ge 0 .$$
  Since $G$ is nonincreasing and preserves nonnegativity we get by Proposition~\ref{prop_pos_gen} that 
  $\hat{X}^2_t-\hat{X}^1_t\ge 0$ for $t \in (t_k,t_{k+1})$. Then, we use again the comparison result between SDEs to get  $\hat{X}^1_{t_{k+1}}=\xi^1_{t_{k+1}-}\le  \xi^2_{t_{k+1}-} =\hat{X}^2_{t_{k+1}}$. By induction, we get for $k=N$ that $\hat{X}^1_{T}\le \hat{X}^2_{T}$, a.s. By Proposition~\ref{prop_approx},  $\hat{X}^l_{T}$ converges in $L^2$ to $X^l_T$ as $N\to \infty$, for $l\in \{1,2\}$. This gives $\P(X^1_T\le X^2_T)=1$.

  We now prove the result when $G$ is completely monotone. By the Hausdorff-Bernstein-Widder theorem~\cite[Theorem IV.12a]{Widder}, there exists a Borel measure $\mu$ on $\R_+$ such that $G(t)=\int_{(0,+\infty)} e^{-\rho t} \mu(d\rho)$. 
We consider for $K>0$ the kernel $G^K$ defined by
\begin{equation}
  G^K(t)=\int_{[0,K]} e^{-\rho t} \mu(d \rho).\label{def_G_K}
\end{equation}
We consider the Stochastic Volterra Equations associated to this kernel:
$$X^{l, K}_t=x^l_0 +\int_0^t G^K(t-s) b(X^{l,K}_s)ds+\int_0^t G^K(t-s)\sigma(X^{l,K}_s)dW_s, \ t\ge 0.$$
We note that the kernel is bounded by $G^K(0)$ and $|(G^K)'(t)|\le KG^K(0)$, so that $G^K$ satisfies~\eqref{Hyp_kernel}. It is nondecreasing and preserves nonnegativity by Theorem~\ref{thm:positivite}. We can thus apply the result of the first part of this theorem and get $\P(X^{1,K}_T \le X^{2,K}_T)=1$. On the other hand, we have $\E[|X^{l,K}_T-X^l_T|^2]\le C r(K)$ by Alfonsi and Kebaier~\cite[Proposition 3.1]{AK},
where $r(K):=\int_{(K,\infty)^2} \frac{1}{\rho_1+\rho_2} \mu(d \rho_1)\mu(d \rho_2)$. By~\eqref{Hyp_kernel}, we get
$$\infty>\int_0^T G(t)^2dt =\int_0^T \int_{\R^2}e^{-(\rho_1+\rho_2)t} \mu(d \rho_1)\mu(d \rho_2)= \int_{\R^2}\frac{1-e^{-{(\rho_1+\rho_2)}T}}{\rho_1+\rho_2} \mu(d \rho_1)\mu(d \rho_2),$$
and then $r(K)\underset{K\to \infty}\to 0$. This gives $\P(X^1_T\le X^2_T)=1$ for all $T>0$ and thus the claim.
\end{proof}

\begin{remark}
In the proof of Theorem~\ref{thm_comparison_result}, Assumption~\eqref{Hyp_kernel} is only used to get the almost sure continuity of the solutions $X^1$ and $X^2$. This allows to deduce $\P(\forall t\ge 0, X^1_t \le X^2_t)=1$ from $\forall t\ge 0, \P(X^1_t \le X^2_t)=1$. However, when $G:\R_+\to \R_+$ is continuous, we still have unique strong solutions by Berger and Mizel~\cite[Theorem 3.A]{BeMi}, and we get by the same arguments that $\forall t\ge 0, \P(X^1_t \le X^2_t)=1$.
\end{remark}

\subsection{Stochastic invariance of a convex domain: from SDEs to SVEs}

We are now in position to prove the stochastic invariance of some Stochastic Volterra Equations. Up to our knowledge, there are few works dealing about the stochastic invariance of SVEs. We mention here the articles of El Euch and al.~\cite{EFR} that gives the nonnegativity of a weak solution of the rough Cox-Ingersoll-Ross process, \blue{Abi Jaber~\cite{AJ_Bernoulli} that shows the nonnegativity of SVEs with jumps} under the assumption that the kernel is nonincreasing, with a nonincreasing resolvent of the first kind (see Definition~\ref{def_nonincr}) and Abi Jaber et al.~\cite[Theorem 3.6]{AJLP} that shows the existence of a weak solution in $\mathscr{C}=(\R_+)^d$ under suitable assumptions on $b$ and $\sigma$ and \blue{the same assumption on the kernel. Since the first version of the present paper, Abi Jaber et al.~\cite{AJCPPSF}  have shown also the existence of Volterra processes on the unit ball. The kernel assumption made in those works} is satisfied by completely monotone kernels as pointed in~\cite[Example 3.7]{AJLP}. Here, the approach is different and relies on Theorem~\ref{thm_carac_np} and Proposition~\ref{prop_pos_gen} combined with the approximating process studied in Proposition~\ref{prop_approx}. Namely, we show that the approximating process remains in this domain~$\mathscr{C}$, and this property then holds for the strong solution of the SVE. 

\begin{theorem}\label{thm_SVE_dom}
  Let $\mathscr{C}\subset \R^d$ be a nonempty closed convex subset and $x_0\in \mathscr{C}$. Let $b:\R^d \to \R^d$ and $\sigma: \R^d \to \mathcal{M}_d(\R)$ satisfy~\eqref{Lip}, and $G:\R_+^* \to  \R_+$ be continuous nonincreasing satisfying~\eqref{Hyp_kernel}.
  \begin{enumerate}
  \item Assume  that $G(0+)<\infty$, $G$  preserves nonnegativity and that the Stochastic Differential Equation
  \begin{equation}\label{SDE_xi2}\xi_t=x+\int_0^t G(0+) b(\xi_s)ds+\int_0^t G(0+)\sigma(\xi_s)dW_s, \ t\ge 0,\end{equation}
  satisfies $\P(\forall t\ge 0, \xi_t\in \mathscr{C})=1$ for all $x\in \mathscr{C}$. 
  Then, the Stochastic Volterra Equation
  \begin{equation}\label{SVE_thm}
    X_t=x_0 +\int_0^t G(t-s) b(X_s)ds+\int_0^t G(t-s)\sigma(X_s)dW_s, \ t\ge 0,\end{equation}
 satisfies  $\P(\forall t\ge 0, X_t \in \mathscr{C})=1$  for all $x_0\in \mathscr{C}$.
 \item  Assume $G(0+)=+\infty$, $G$ is 
 completely monotone and that there exists $\Lambda>0$ such that for all $\lambda> \Lambda$, the SDE 
 \begin{equation}\label{SDE_lambda}\xi_t=x+\int_0^t \lambda b(\xi_s)ds+\int_0^t \lambda \sigma(\xi_s)dW_s, \ t\ge 0,
 \end{equation}
 satisfies $\P(\forall t\ge 0, \xi_t\in \mathscr{C})=1$ for all $x\in \mathscr{C}$. Then, the SVE~\eqref{SVE_thm} satisfies  $\P(\forall t\ge 0, X_t \in \mathscr{C})=1$  for all $x_0\in \mathscr{C}$. 
  \end{enumerate}
\end{theorem}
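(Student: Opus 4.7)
The plan is to show that the approximation $\hat{X}$ constructed in Section~\ref{Subsec_SVEapprox} remains in $\mathscr{C}$ path-by-path, then pass to the limit via the $L^2$ estimate of Proposition~\ref{prop_approx}. The scalar reduction is key: for every supporting closed half-space of $\mathscr{C}$, applying Proposition~\ref{prop_pos_gen} to the projection of $\hat{X}$ onto the associated outer normal produces a deterministic nonnegativity statement, and intersecting over all such half-spaces recovers membership in $\mathscr{C}$.

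For part~(1), represent $\mathscr{C}=\bigcap_{(a,c)\in S}\{x\in\R^d:\langle a,x\rangle\ge c\}$, fix $N$ and the grid $t_k=kT/N$, and set $\delta_j:=(\hat{X}_{t_j}-\hat{X}_{t_j-})/G(0+)$ for $1\le j\le N$. Argue by induction on $k$ that $\hat{X}_t\in\mathscr{C}$ almost surely for all $t\in[0,t_{k+1}]$. The base $k=0$ follows because $\hat{X}_t=x_0\in\mathscr{C}$ on $[0,t_1)$, while $\hat{X}_{t_1}=\xi_{t_1-}\in\mathscr{C}$ a.s.\ by the SDE invariance hypothesis applied to~\eqref{SDE_first_interval} with initial condition $x_0\in\mathscr{C}$. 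For the step, observe that on $[t_k,t_{k+1})$
\begin{equation*}
\hat{X}_t=x_0+\sum_{j=1}^{k}\delta_j G(t-t_j),
\end{equation*}
so fixing $(a,c)\in S$ and setting $y_0:=\langle a,x_0\rangle-c\ge 0$ and $y_j:=\langle a,\delta_j\rangle$, the induction hypothesis $\hat{X}_{t_j}\in\mathscr{C}$ translates to $y_0+\sum_{j'=1}^{j}y_{j'}G(t_j-t_{j'})\ge 0$ for all $j\le k$. Since $G$ is nonincreasing and preserves nonnegativity, Proposition~\ref{prop_pos_gen} yields $y_0+\sum_{j:t_j\le t}y_j G(t-t_j)\ge 0$, i.e.\ $\langle a,\hat{X}_t\rangle\ge c$, for every $t\ge 0$. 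Intersecting over $(a,c)\in S$ shows $\hat{X}_t\in\mathscr{C}$ on $[t_k,t_{k+1})$, and closedness gives $\hat{X}_{t_{k+1}-}\in\mathscr{C}$. Conditioning on $\mathcal{F}_{t_k}$ and invoking the SDE invariance hypothesis applied to~\eqref{SDE_xi2} started from $\hat{X}_{t_{k+1}-}\in\mathscr{C}$ yields $\hat{X}_{t_{k+1}}=\xi_{t_{k+1}-}\in\mathscr{C}$, completing the induction. Proposition~\ref{prop_approx}, together with continuity of $G$ (so that $\omega_{G,T}(T/N)\to 0$), then gives $\hat{X}_t\to X_t$ in $L^2$; hence $X_t\in\mathscr{C}$ a.s.\ for each fixed $t$, and the a.s.\ continuity of $X$ under~\eqref{Hyp_kernel} (Wang~\cite{Wang}) upgrades this to $\P(\forall t\ge 0, X_t\in\mathscr{C})=1$.

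For part~(2), approximate $G$ by $G^K(t):=\int_{[0,K]}e^{-\rho t}\mu(d\rho)$ as in the proof of Theorem~\ref{thm_comparison_result}. Each $G^K$ is completely monotone (so preserves nonnegativity by Theorem~\ref{thm:positivite}), nonincreasing, continuous on $\R_+$, with $G^K(0+)=\mu([0,K])<\infty$, and satisfies~\eqref{Hyp_kernel} (uniform estimates follow from boundedness of $G^K$ and the bound $|(G^K)'|\le K G^K(0+)$). Since $G^K(0+)\uparrow G(0+)=\infty$, choose $K$ large enough that $G^K(0+)>\Lambda$; the hypothesis on~\eqref{SDE_lambda} then allows part~(1) to be applied to the SVE $X^K$ driven by $G^K$, giving $X^K_t\in\mathscr{C}$ a.s.\ for all $t$. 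The $L^2$ convergence $\E[|X^K_t-X_t|^2]\le Cr(K)\to 0$ from~\cite[Proposition~3.1]{AK} (recalled in the proof of Theorem~\ref{thm_comparison_result}), combined with closedness of $\mathscr{C}$ and the continuity of $X$, finishes the proof. The main subtlety, shared by both parts, is keeping the half-space reduction pathwise: since Proposition~\ref{prop_pos_gen} is a deterministic implication, once the finitely many events $\{\hat{X}_{t_j}\in\mathscr{C}\}_{j\le k}$ are secured on a set of full measure, the uncountable family of inequalities indexed by $(a,c)\in S$ can be verified simultaneously without any additional measurability cost.
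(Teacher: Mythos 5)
Your proof is correct and follows essentially the same route as the paper's: reduce to half-spaces, apply Proposition~\ref{prop_pos_gen} to the scalar projections of $\hat{X}$ along each outer normal, propagate membership in $\mathscr{C}$ across each time step via the SDE invariance hypothesis, pass to the limit with Proposition~\ref{prop_approx} and a.s.\ continuity, and handle $G(0+)=+\infty$ by truncating the Bernstein measure exactly as in the proof of Theorem~\ref{thm_comparison_result}. The one small stylistic difference is that the paper writes $\mathscr{C}$ as a \emph{countable} intersection of half-spaces to run the argument $\theta$-by-$\theta$, whereas you correctly observe that countability is unnecessary because the implication of Proposition~\ref{prop_pos_gen} is pathwise: on the full-measure event where the finitely many inclusions $\hat{X}_{t_j}\in\mathscr{C}$, $j\le k$, hold, the half-space inequalities follow deterministically for every $(a,c)$ simultaneously.
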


\begin{proof}
From Wang~\cite[Theorems 1.1 and 1.3]{Wang}, we know that there exists a unique solution~$X$ to the SVE~\eqref{SVE} in both cases, and that this solution is almost surely Hölder continuous. In fact, we will only use here the continuity of the solution.   It is thus sufficient to check that $\forall t \in \R_+, \P(X_t \in \mathscr{C})=1$ : this will imply $\P(X_t \in \mathscr{C}, \forall t\in \R_+ \cap \mathbb{Q})=1$ and then $\P(X_t \in \mathscr{C}, \forall t\in \R_+)=1$ by using the almost sure continuity.

We prove the first statement. We consider $T>0$, and $\hat{X}$ the approximation defined by~\eqref{hat_X} with $N\in \mathbb{N}^*$  time steps and $G(0)=G(0+)$. We show by iteration on~$k$ that $\hat{X}_t\in \mathscr{C}$ for $t\in[0,t_k]$.  
For $k=1$, this is clear since $\hat{X}_t=x_0$ for $t\in [0,t_1)$ and then $\xi_{t_1}\in \mathscr{C}$ by using the stochastic invariance of~$\scrC$ by~\eqref{SDE_xi2}. We now suppose that $\hat{X}_t\in \mathscr{C}$ for $t\in[0,t_k]$ and show that the same result holds on~$(t_k,t_{k+1}]$. 
To do so, we write $\mathscr{C}$ as a countable intersection of half spaces: there exists families $(\alpha_\theta)_{\theta \in \Theta}$ and $(\beta_\theta)_{\theta \in \Theta}$ of vectors and real numbers with $\Theta$ countable, such that 
\begin{equation}\label{convex_halfspaces}\mathscr{C}=\cap_{\theta \in \Theta} \{ x \in \R^d, \alpha_\theta^T x +\beta_\theta \ge 0 \}.
\end{equation}
Let $\theta \in \Theta$. We have, $\alpha_\theta^T \hat{X}_t+\beta_\theta=\alpha_\theta^T x_0+\beta_\theta  +\sum_{k:t_j\le t }\frac{\alpha_\theta^T (\hat{X}_{t_{j}}-\hat{X}_{t_j-})}{G(0)}G(t-t_j)\ge 0$ for $t\in [0,t_k]$. Then, since the kernel is nonincreasing and preserves nonnegativity, we get that $\alpha_\theta^T \hat{X}_t+\beta_\theta\ge 0$ for $t\in (t_k,t_{k+1})$ by Proposition~\ref{prop_pos_gen}. Since $\theta$ is arbitrary, this shows that $\hat{X}_{t_{k+1}-}\in \mathscr{C}$. Then, we use the stochastic invariance of the SDE to get that $\hat{X}_{t_{k+1}}=\xi_{t_{k+1}-}\in \mathscr{C}$. Thus, we obtain that $\hat{X}_T \in \mathscr{C}$. Besides, by Proposition~\ref{prop_approx}, $\hat{X}_T$ converges in $L^2$ to $X_T$ as $N \to \infty$, which shows that $X_T \in \mathscr{C}$ almost surely. The claim follows.

We now prove the second point, proceeding similarly as in the proof of Theorem~\ref{thm_comparison_result}. By the Hausdorff-Bernstein-Widder theorem~\cite[Theorem IV.12a]{Widder},  $G(t)=\int_{(0,+\infty)} e^{-\rho t} \mu(d\rho)$ for a Borel measure $\mu$ on $\R_+$, and we define for $K>0$ the kernel $G_K$ by~\eqref{def_G_K}. We take  $K$ large enough so that $G_K(0)>\Lambda$. We consider the Stochastic Volterra Equation associated to this kernel:
$$X^{K}_t=x_0 +\int_0^t G^K(t-s) b(X^{K}_s)ds+\int_0^t G^K(t-s)\sigma(X^{K}_s)dW_s, \ t\ge 0.$$
The kernel $G^K$  satisfies~\eqref{Hyp_kernel} (see the proof of Theorem~\ref{thm_comparison_result}) and preserves nonnegativity by Theorem~\ref{thm:positivite}. We can thus apply the result of the first part of this Theorem and get $\P(X^K_T \in \mathscr{C})=1$ for any $T>0$. Since $\E[|X^K_T-X_T|^2]\le C r(K)$ by~\cite[Proposition 3.1]{AK} with $r(K)\underset{K\to \infty}\to 0$, we get  $\P(X_T\in \mathscr{C})=1$ and thus the claim.
\end{proof}

\begin{remark}
  We see from the proof that the conclusions of Theorem~\ref{thm_SVE_dom}~(2) and  Theorem~\ref{thm_comparison_result}) also holds  when $G:\R_+^*\to \R_+$ is such that there exists a family of continuous, nonincreasing and nonnegativity preserving kernels $(G^K)_{K\in \R_+^*}$ such that $G^K(0)\underset{K\to \infty}{\to} \infty$, $\int_0^T(G(t)-G^K(t))^2dt\underset{K\to \infty}{\to}  0$ for all $T>0$, and $G^K$ satisfies~\eqref{Hyp_kernel} for all $K>0$. 
  When $G$ is completely monotone, the family~\eqref{def_G_K} satisfy all these requirements, as well as the family $G_K(t)=G(t+1/K)$.
  \blue{It would be interesting to know if more general kernels~$G$ could fit these conditions (typically one would expect that the resolvent of $G$ exists and satisfies $R(ds)=r(s)ds$ with  $r:\R_+^* \to \R_+$ locally integrable and nonincreasing, in view of Theorem~\ref{thm_char_pos}) and, for them    to build such  an approximating family $(G^K)_{K\in \R_+^*}$. This is left for further research. }
\end{remark}

\begin{remark}\label{rk_Wishart}\blue{We have made in the present section standard Lipschitz assumptions on the coefficients.  This choice allows to present comparison and stochastic invariance results for strong solutions. Roughly speaking, the main message is that for nonnegativity preserving kernels, these properties hold if they hold for the corresponding SDE (SDEs when $G(0+)=+\infty$). We may then wonder if this can be transposed to weak solution for the stochastic invariance.  From
the proof of Theorem~\ref{thm_SVE_dom}, we see indeed that the approximating process $\hat{X}$ remains in $\mathscr{C}$ even if $b$ and $\sigma$ are not Lipschitz continuous, provided that a weak solution to~\eqref{SDE_xi2} in $\mathscr{C}$ exists. We could thus relax the Lipschitz assumption, study the tightness and the possible weak convergence of (a subsequence) of $\hat{X}$ toward a weak solution of the SVE~\eqref{SVE_thm}. This is left for further research.} This approach may be relevant in particular for Affine Volterra processes. For example, one may justify like this the existence of a Volterra Wishart process
\begin{equation}\label{Volterra_Wishart} X_t=X_0+\int_0^t G(t-s) [\alpha a^\top a +bX_s +X_s b^\top ]ds + \int_0^t G(t-s) [\sqrt{X_s} dB_s a +a^\top dB_s^\top \sqrt{X_s}],
\end{equation}
with $\alpha \in \R$ $a,b\in \mathcal{M}_d(\R)$ and $B$ a Brownian motion in  $\mathcal{M}_d(\R)$, under the condition\footnote{Let us recall that the classical Wishart SDE $X_t=X_0+\int_0^t  [\alpha a^\top a +bX_s +X_s b^\top ]ds + \int_0^t  [\sqrt{X_s} dB_s a +a^\top dB_s^\top \sqrt{X_s}]$  has a unique weak solution for $\alpha \ge d-1$.} that $\frac{\alpha}{G(0+)}\ge d-1$, for a kernel $G$ that preserves nonnegativity and such that $G(0+)<\infty$. On the other hand, it does not seem possible to define~\eqref{Volterra_Wishart} when $G(0+)=\infty$ and $d\ge 2$, since the condition $\frac{\alpha}{\lambda}\ge d-1$ cannot be satisfied for large values of~$\lambda$. This somehow justifies the approaches of Cuchiero and Teichmann~\cite{CuTe} and Abi Jaber~\cite{AJ_Wishart} that consider square of Volterra Matrix Gaussian processes rather than~\eqref{Volterra_Wishart} to extend classical Wishart processes.      
\end{remark}
\begin{example}\label{ex_VGBM}
  Let $G$ be a kernel satisfying the assumptions of Theorem~\ref{thm_SVE_dom}, $\mu \in \R$ and $\sigma\ge 0$. By Theorem~\ref{thm_SVE_dom}, for any $x_0\ge 0$, the solution of
  $$X_t=x_0+\int_0^t G(t-s) \mu X_s ds+\int_0^t G(t-s) \sigma X_s dW_s,$$
remains nonnegative.  This process is a natural extension to Volterra type dynamics of the geometric Brownian motion.    
\end{example}
\blue{
\begin{example}\label{ex_simplex}(A stochastic Volterra equation on the simplex.) Let $d\ge 2$, $\mathscr{C}=\{x \in \R^d: x^k\ge 0, \ 1\le k \le d \text{ and } \sum_{k=1}^d x^k=1\}$ be the closed simplex. Let $\pi \in \mathcal{C}$, $a\le 0$, $b(x)=a(x-\pi)$, $h:\R \to \R$ be a Lipschitz function such that $h(0)=0$ and $\sigma: \R^d \to    \mathcal{M}_d(\R)$ defined by 
  $$\sigma_{k,k}(x)=(1-x^k) h(x^k), \  \sigma_{k,l}(x)=-x^k h(x^l) \text{ for }l \not = k.$$
  Note that $\sigma$ is Lipschitz on $[0,1]^d$. By Theorem~\ref{thm_SVE_dom}, if the kernel $G$ satisfies the condition therein, we get that for any $x_0\in \mathscr{C}$, the solution of
  $$X_t=x_0+\int_0^t G(t-s) b(X_s) ds+\int_0^t G(t-s) \sigma(X_s) dW_s,$$
remains in the simplex~$\mathscr{C}$. For this, we have to check that for any $\lambda>0$, the set $\mathscr{C}$ is invariant by the SDE
$$\xi^k_t=x^k+\int_0^t \lambda a (\xi^k_s- \pi^k) ds + \int_0^t \lambda  \left( h(\xi^k_s) dW^k_s- \xi^k_s \sum_{l=1}^d h(\xi^l_s) dW^l_s \right), \ 1\le k\le d. $$
This is the case, as shown by Gourieroux and Jasiak~\cite{GoJa} who introduced this SDE with a square-root function~$h$. Indeed, if for example $\xi^k$ vanishes, the diffusion term of $\xi^k$ vanishes as well while the drift is equal to $-\lambda a\pi^k \ge 0$. The coordinates stay therefore nonnegative. Besides we have $$d\left( \sum_{k=1}^d \xi^k_t\right)=\lambda a\left(\sum_{k=1}^d \xi^k_t-1\right)dt+\lambda\left(1-\sum_{k=1}^d \xi^k_t\right) \left( \sum_{l=1}^d h(\xi^l_t)dW^l_t \right) ,$$ so that $\sum_{k=1}^d \xi^k_t=1$ for all $t\ge 0$ if $x\in\mathscr{C}$.     
\end{example}
}
Theorem~\ref{thm_SVE_dom} relies on the stochastic invariance of Stochastic Differential Equation. This topic has been well studied in the literature. Da Prato and Frankowska~\cite{DPFr} have precisely studied the case of closed convex domain, and we refer to Abi Jaber et al.~\cite{AJBI} for a general  necessary and sufficient condition of stochastic invariance, as well as an up to date review on the literature. 

A natural question however is to determine if we may find some coefficients $b$ and $\sigma$ such that the SVE~\eqref{SVE_thm} satisfies $\P(\forall t \ge 0, X_t \in \mathscr{C})$ for any kernel $G$ satisfying the conditions of Theorem~\ref{thm_SVE_dom}.  This amounts to determine $b$ and $\sigma$ such that stochastic invariance holds for the SDE~\eqref{def_xit}, for any $\lambda>0$. 
In the case $d=1$ and $\mathscr{C}=\R_+$, this is true if $b(0)\ge 0$ and $\sigma(0)=0$, see e.g. Revuz and Yor~\cite[Theorem IX.3.7]{ReYo}. 
More generally, let us assume that~\eqref{Lip} still holds and that 
\begin{equation}\label{hyp_C11loc}
\R^d \ni x\mapsto \sigma \sigma^\top(x)=:C(x) \text{ is a } \mathcal{C}^1 \text{ function with locally Lipschitz derivative.}
\end{equation}
Then, \cite[Theorem 2.3]{AJBI} gives that the solution of~\eqref{def_xit} satisfies $\P(\xi_t \in \mathscr{C}, t\ge 0)$ if, and only if,
\begin{equation}
  \forall x \in \mathscr{C},u\in \mathcal{N}^1_{\mathscr{C}}(x), \  
  \begin{cases}
    C(x)u=0 \\
    \langle u, \lambda b(x) -\frac {\lambda^2} 2 \sum_{j=1}^d DC^j(x)(CC^+)^j(x)  \rangle \le 0.
  \end{cases}
\end{equation} 
Here, we take back the notation of~\cite{AJBI}: $\mathcal{N}^1_{\mathscr{C}}(x)=\{u \in \R^d: \langle u,y-x \rangle  \le o(|y-x|), y\in \mathscr{C} \}$ is the first order normal cone at $x$ and $(CC^+)^j(x)$ is the $j$th column of  $(CC^+)(x)$, where $C^+$ is the Moore-Penrose pseudoinverse of~$C$. Thus, we have $\P(\xi_t \in \mathscr{C}, t\ge 0)$ for any $\lambda>0$ if, and only if,
\begin{equation}\label{cond_all_lambda}
  \forall x \in \mathscr{C},u\in \mathcal{N}^1_{\mathscr{C}}(x), \  
  \begin{cases}
    C(x)u=0 \\
    \langle u,  b(x)  \rangle \le 0, \   \langle u, \sum_{j=1}^d DC^j(x)(CC^+)^j(x)  \rangle \ge 0.
  \end{cases}
\end{equation} 
Let us note that the convexity of $\mathscr{C}$ gives by~\cite[Remark 2.9]{DPFr} and~\cite[Proposition 2.4]{AJBI}, under some smoothness assumptions, that $\langle u, \sum_{j=1}^d DC^j(x)(CC^+)^j(x)  \rangle\le 0$ for $x$ being on the boundary of $\mathscr{C}$ and therefore the last condition of~\eqref{cond_all_lambda} can simply be rewritten in this case as $\langle u, \sum_{j=1}^d DC^j(x)(CC^+)^j(x)  \rangle = 0$. 

\begin{example}
  Let $\mathscr{C}=\R_+ \times \R^{d-1}$, $b, \sigma$ satisfy~\eqref{Lip} and~\eqref{hyp_C11loc}. Assume that $b_1(x)\ge 0$ and $\sigma_{1k}(x)=0$ for $k\in \{1,\dots,d \}$ when $x_1=0$. We note $(e_l)_{1\le l \le d}$ the canonical basis of $\R^d$. Then, for $x\in \R^d$ such that $x_1=0$, we have $\mathcal{N}^1_{\mathscr{C}}(x)=\{\lambda e_1, \lambda<0 \}$, $C_{1,j}(x)=C_{j,1}(x)=0$ for $j\in \{1,\dots, d\}$. This gives  $(CC^+)^1(x)=0$ and $(CC^+)^j(x) \in \textup{Span}(e_2,\dots,e_d)$ for $j\ge 2$.
  Since  $\partial_l C_{1,j}(x)=\lim_{\varepsilon\to 0} \frac{ C_{1,j}(x+\varepsilon e_l)-C_{1,j}(x)}{\varepsilon}=0$ for $l\in \{2,\dots,d\}$, we get $\langle e_1, DC^j(x) e_l\rangle=\partial_lC_{1,j}(x)=0$ and thus $\langle e_1, DC^j(x)(CC^+)^j(x)  \rangle=0$. Therefore,~\eqref{cond_all_lambda} is satisfied. By Theorem~\ref{thm_SVE_dom}, if the kernel $G$ satisfies the conditions therein, the SVE~\eqref{SVE_thm} satisfies $\P(\forall t\ge 0, X_t \in \mathscr{C})=1$. 
\end{example}

\section{Second order approximation schemes for SVEs with discrete completely monotone kernels}\label{Sec_approx}
\blue{In Section~\ref{Sec_SVEs}, we have obtained comparison and stochastic invariance results for Stochastic Volterra Equation by using an approximation scheme. This approximation scheme converges strongly at a rate given by Proposition~\ref{prop_approx}, and the proof principle was to propagate the properties of the approximation scheme to the limit process. The approximation scheme used in Section~\ref{Sec_SVEs} is obtained by splitting, and it is well known that this method is interesting to get high order of convergence for the weak error. The goal of this section is to develop a scheme that achieves a weak rate of convergence of order~2 and stays in the same closed convex domain as the underlying SVE, see Theorem~\ref{thm_second_order_SVE} below.  To get this result, we consider multi-exponential convolution kernels in order to have an SDE representation of the SVE and to use existing regularity results for the Cauchy problem. The invariance argument relies, as in Section~\ref{Sec_SVEs}, on Proposition~\ref{prop_pos_gen}. This approach is then applied to the case of multifactor Cox-Ingersoll-Ross and Heston model: the obtained approximation scheme shows on our numerical experiments  a convergence of order~2 and outperforms the corresponding Euler scheme.        }

\subsection{Second order schemes for SDEs}

Let us start by recalling some results on the analysis of the weak error for approximation schemes of SDEs. Let us define
\begin{align*}
  \Cpol{\R^d}:=\{f: &\R^d\to \R, \ f\in\mathcal{C}^\infty \ \text{s.t.} \\ &\forall \alpha \in \N^d, \exists C_\alpha>0, e_\alpha \in \N^*, \forall x \in \R^d, |\partial_\alpha f(x)|\le C_\alpha(1+|x|^{e_\alpha}) \},
\end{align*}
the set of real valued $\mathcal{C}^\infty$ functions with all derivatives of polynomial growth. Here, we use the notation $\partial_\alpha=\partial_1^{\alpha_1}\dots\partial_d^{\alpha_d}$ for $\alpha \in \N^d$. For $f\in \Cpol{\R^d}$, we say that $(C_\alpha,e_\alpha)_{\alpha \in \N^d}$ is a good sequence for $f$ if we have $|\partial_\alpha f(x)|\le C_\alpha(1+|x|^{e_\alpha})$ for all $x\in \R^d$, and all $\alpha \in \N^d$. 

We consider coefficients $b:\R^d\to \R^d$ and $\sigma:\R^d\to \mathcal{M}_d(\R)$ with sublinear growth (i.e. $\exists C\in \R_+,\forall x,\ |b(x)|+|\sigma(x)|\le C(1+|x|)$) such that for any $1\le k,l \le d$, $b_k,(\sigma \sigma^\top)_{k,l} \in \Cpol{\R^d}$. We introduce $$L f(x)=\sum_{k=1}^d b_k(x) \partial_kf(x)+\frac 12 \sum_{k,l=1}^d (\sigma \sigma^\top (x))_{k,l} \partial_k\partial_l f(x),$$ the infinitesimal generator of the Stochastic Differential Equation 
$$ \xi^x_t= x+\int_0^t b(\xi_s^x)ds +\int_0^t \sigma (\xi_s^x)dW_s, \ t\ge 0, x\in \R^d,$$
where $W$ is a $d$-dimensional Brownian motion. We assume that this SDE has a unique weak solution. We consider a discretization scheme $\varphi(x,t,Z)$, where  $\varphi:\R^d \times \R_+ \times \R^{d_Z} \to \R^d$ and $Z$ is a random variable taking values in $\R^{d_Z}$, with $d_Z\in \N^*$. Let $T>0$, $N\in \N^*$ and $(Z_\ell)_{\ell\ge 1}$ be an i.i.d. sequence of random variable having the same law as~$Z$. We then define the scheme $\hat{\xi}^N$ on the regular time grid $\ell \frac{T}{N}$ with $0\le \ell \le N$ by:
\begin{equation}\label{scheme_xiN} \hat{\xi}^N_0=x, \   \hat{\xi}^N_{\ell \frac TN}=\varphi\left(\hat{\xi}^N_{(\ell-1)\frac TN},\frac T N, Z_\ell \right), \ \ell \in \{1,\dots,N\}.
\end{equation}

\begin{theorem}\label{Thm_TT}
   Let $\nu \in \N^*$. Under the above framework, let us assume that:
  \begin{enumerate}
    \item $f \in \Cpol{\R^d}$ is such that the function $u(t,x)= \E[f(\xi^x_{T-t})]$ is $\mathcal{C}^\infty$ on $[0,T] \times \R^d$, solves $\partial_t u(t,x)=-L u(t,x)$, and satisfies
    \begin{equation}\label{cont_derivees}
    \forall l \in \N, \alpha \in \N^d,\exists C_{l,\alpha},e_{l,\alpha}>0, \forall x \in
    \R^d, t\in[0,T],\   |\partial_t^l \partial_\alpha u (t,x)| \le C_{l,\alpha}(1+\|x\|^{e_{l,\alpha}}).
    \end{equation}
    \item \begin{enumerate} \item $\forall q \in \N^*,\exists N_q \in \N^*, \sup_{ N\ge N_q, 0\le \ell \le N } \E[|\hat{\xi}^N_{\ell \frac TN}|^q]<\infty$,
      \item for any function $f\in \Cpol{\R^d}$ with a good sequence  $(C_\alpha,e_\alpha)_{\alpha \in \N^d}$, there exist $C,E,\eta>0$ depending only on $(C_\alpha,e_\alpha)_{\alpha \in \N^d}$ such that
    \begin{equation}\label{potential_nu}\forall t \in (0,\eta), \forall x \in \R^d, \left|\E[f(\varphi(x,t,Z))]-\sum_{k=0}^\nu \frac{t^k}{k!} L^kf(x) \right|\le C t^{\nu+1}(1+|x|^E).\end{equation} 
    \end{enumerate}
  \end{enumerate}
Then, there is $K>0$ and  $\bar{N} \in \N$, such that 
$\forall N \ge \bar{N}, \  |\E [f(\hat{\xi}^N_T)]-\E [f(\xi^x_T)]|\le \frac{K}{N^\nu}.$    
\end{theorem}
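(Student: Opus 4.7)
The strategy is the classical Talay–Tubaro telescoping argument, rewritten using the Kolmogorov equation on $u$. Setting $t_\ell=\ell T/N$, I would introduce $u(t,x)=\E[f(\xi^x_{T-t})]$ so that $u(0,x)=\E[f(\xi^x_T)]$ and $\E[f(\hat\xi^N_T)]=\E[u(T,\hat\xi^N_T)]$. Then write the telescoping identity
\begin{equation*}
\E[u(T,\hat\xi^N_T)]-u(0,x)=\sum_{\ell=0}^{N-1}\E\bigl[u(t_{\ell+1},\hat\xi^N_{t_{\ell+1}})-u(t_\ell,\hat\xi^N_{t_\ell})\bigr].
\end{equation*}
Conditioning on $\mathcal{F}_{t_\ell}$ inside each summand and using~\eqref{scheme_xiN}, the problem reduces to bounding, uniformly in $\ell$,
\begin{equation*}
\Delta_\ell(y):=\E\bigl[u\bigl(t_{\ell+1},\varphi(y,T/N,Z)\bigr)\bigr]-u(t_\ell,y)\quad\text{for }y=\hat\xi^N_{t_\ell}.
\end{equation*}

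The heart of the proof is to show $|\Delta_\ell(y)|\le C(T/N)^{\nu+1}(1+|y|^E)$ by inserting a common intermediate quantity, namely the Taylor polynomial $P_\ell(y):=\sum_{k=0}^{\nu}\frac{(T/N)^k}{k!}L^k u(t_{\ell+1},y)$. On one side, apply the Markov property to the exact SDE to write $u(t_\ell,y)=\E[u(t_{\ell+1},\xi^y_{T/N})]$ (since $u(t_\ell,y)=\E[f(\xi^y_{T-t_\ell})]$). Iterating Itô's formula $\nu$ times on $u(t_{\ell+1},\xi^y_\cdot)$ and using $\partial_t u=-Lu$ together with the polynomial bounds~\eqref{cont_derivees} to cancel the local martingale terms and bound the integral remainder, one obtains
\begin{equation*}
\bigl|u(t_\ell,y)-P_\ell(y)\bigr|\le C_1(T/N)^{\nu+1}(1+|y|^{E_1}).
\end{equation*}
On the other side, since $x\mapsto u(t_{\ell+1},x)$ lies in $\Cpol{\R^d}$ with a good sequence $(C_\alpha,e_\alpha)$ that is uniform in $\ell$ (by~\eqref{cont_derivees}, the polynomial bounds do not depend on time), Assumption~2(b) applied to this function yields
\begin{equation*}
\bigl|\E[u(t_{\ell+1},\varphi(y,T/N,Z))]-P_\ell(y)\bigr|\le C_2(T/N)^{\nu+1}(1+|y|^{E_2}),
\end{equation*}
valid as soon as $T/N<\eta$, i.e.\ $N\ge \bar N:=\lceil T/\eta\rceil$. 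The triangle inequality gives the claimed pointwise bound on $\Delta_\ell$.

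To conclude, plug $y=\hat\xi^N_{t_\ell}$, take expectations, and sum over $\ell$. Assumption~2(a) ensures $\sup_{N\ge N_q,\,0\le\ell\le N}\E[|\hat\xi^N_{t_\ell}|^{E_1\vee E_2}]<\infty$, so that each of the $N$ terms contributes at most $\tilde C(T/N)^{\nu+1}$, giving a total of order $N^{-\nu}$. The main technical point to verify carefully is the iterated Itô expansion: one must justify that the successive stochastic integrals are genuine martingales (rather than just local ones) and that the $(\nu+1)$-fold integral remainder is controlled uniformly in $\ell$ — both rely on combining~\eqref{cont_derivees} (applied to $L^{\nu+1}u(t_{\ell+1},\cdot)\in\Cpol{\R^d}$) with standard polynomial moment estimates for $\xi^y$ that follow from the sublinear growth of $b$ and $\sigma$.
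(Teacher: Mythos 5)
Your proof is correct and is exactly the classical Talay--Tubaro telescoping argument. The paper does not supply its own proof of Theorem~\ref{Thm_TT}: it explicitly refers to Talay and Tubaro and to Alfonsi's book for the argument, and the proof given in those references is precisely the one you reproduce — telescope via $u(t,x)=\E[f(\xi^x_{T-t})]$, compare one step of the scheme with one step of the exact flow through the common Taylor polynomial $\sum_{k\le\nu}\frac{(T/N)^k}{k!}L^k u(t_{\ell+1},\cdot)$, use Assumption~2(b) on one side and iterated Itô expansion on the other, then close with Assumption~2(a) for uniform moments.

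One small remark of exposition: when you iterate Itô's formula on $s\mapsto u(t_{\ell+1},\xi^y_s)$, the time argument of $u$ is frozen, so the PDE $\partial_t u=-Lu$ plays no role at that step — the stochastic integrals are discarded because~\eqref{cont_derivees} gives polynomial control of $\partial_\alpha u$ which, combined with the moment bounds for $\xi^y$, makes them genuine martingales. The PDE is what guarantees the Markov identity $u(t_\ell,y)=\E[u(t_{\ell+1},\xi^y_{T/N})]$ (equivalently, it follows directly from the definition of $u$ and the Markov property of $\xi$, which is the route you actually take). So the invocation of $\partial_t u=-Lu$ in the Itô-iteration sentence is slightly misplaced, but it does not affect the validity of the argument.
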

\noindent This result has been shown by Talay and Tubaro~\cite{TaTu} for the Euler-Maruyama scheme (with $\nu=1$) and can be found in the present form in Alfonsi~\cite[Theorem 2.3.8]{AA_book}. Note that when $b$ and $\sigma$ have bounded derivatives of any order, \eqref{cont_derivees} is satisfied by~\cite[Lemma 2]{TaTu} for any $f\in \Cpol{\R^d}$.

We see from Theorem~\ref{Thm_TT} that the key property for a scheme to lead to a weak error of order~$\nu$ is to have~\eqref{potential_nu}. We say that the scheme $\varphi(x,t,Z)$ is a {\it potential $\nu$-th order scheme for~$L$} if~\eqref{potential_nu} holds.
An important result is that potential second order schemes can be constructed inductively by splitting the infinitesimal generator and by using scheme composition, see~\cite[Corollary 2.3.14]{AA_book}. This idea has been used by Ninomiya and Victoir~\cite{NV} who have proposed a general way to obtain second order schemes. Namely, they consider the following splitting
$$ L=V_0+ \frac 12 \sum_{k=1}^d V_k^2,$$
with $V_k=\sum_{i=1}^d \sigma_{i,k}\partial_i$ for $1\le k\le d$  and $V_0=\sum_{i=1}^d\left[b_i-\frac 12 \sum_{k=1}^d \sum_{j=1}^d \partial_j \sigma_{i,k}\sigma_{j,k} \right]\partial_i$.
These operators can be written $V_k f(x)=v_k(x).\nabla f(x)$ with $v_k\in \Cpol{\R^d}$. We assume that $|v_k(x)|\le C(1+|x|)$, so that the Ordinary Differential Equations
$$ \frac{d}{dt} X_k(t,x)=v_k(X_k(t,x)), t \in \R, \  X_k(0,x)=x\in\R^d,$$ 
are well defined.  The Ninomiya-Victoir scheme is then defined as follows:
\begin{equation} \label{def_NV}\varphi(x,t,Z)=\begin{cases} X_0(t/2,X_1(\sqrt{t}Z^1,\dots X_d(\sqrt{t}Z^d,X_0(t/2,x) ))) \text{ if }Z^{0}>0\\  X_0(t/2,X_d(\sqrt{t}Z^d,\dots X_1(\sqrt{t}Z^1,X_0(t/2,x) ))) \text{ otherwise,}\end{cases}\end{equation}
where $Z=(Z^0,Z^1,\dots,Z^d)$ are i.i.d. standard normal variables. 
It satisfies \eqref{potential_nu} with $\nu=2$ under the above assumptions, see~\cite[Theorem 2.3.17]{AA_book}.

\subsection{Second order schemes for multifactor SVEs}

We now apply the splitting technique to Stochastic Volterra Equations with a discrete completely monotone kernel: 
\begin{equation}\label{discrete_cm}
  G(t)=\sum_{i=1}^n \gamma_i e^{-\rho_i t}.
\end{equation} 
We call "multifactor SVEs" these equations. Here, without loss of generality, we assume that $\gamma_i>0$ for all $i\in \{1,\dots,n\}$ and $0\le \rho_1<\dots <\rho_n$. 
We consider the following Stochastic Volterra Equation
\begin{equation}\label{SVE_scheme}
  X_t =x_0+ \int_0^t G(t-s)b(X_s)ds +  \int_0^t G(t-s)\sigma (X_s)dW_s.
\end{equation}
It is well known (see e.g. Alfonsi and Kebaier~\cite[Proposition 2.1]{AK}) that the solution of the SVE~\eqref{SVE_scheme} is given by $X_t=x_0+\sum_{i=1}^n \gamma_i  X^i_t$, where ${\bf X}:=(X^1,\dots,X^n)$ solves the Stochastic Differential Equation in $(\R^d)^n$:
\begin{align} \label{SDE_dcm}
  X^i_t=-\rho_i \int_0^t X^i_s ds + \int_0^t b(X_s)ds + \int_0^t \sigma(X_s)dW_s, \ i=1,\dots,n,\  t\ge 0. 
\end{align}
Let us note that if $b:\R^d\to \R^d$ and $\sigma:\R^d\to \mathcal{M}_d(\R)$ have sublinear growth and are such that $b_k,(\sigma \sigma^\top)_{k,l}\in \Cpol{\R^d}$ for $1\le k,l\le d$, the same holds for the functions $(\R^d)^n\ni(x^1,\dots,x^n)\mapsto -\rho_i x^i + b(x_0 +\sum_{i=1}^n \gamma_i x^i)$ and $(x^1,\dots,x^n)\mapsto  \sigma(x_0 +\sum_{i=1}^n \gamma_i x^i)$, $i\in\{1,\dots,n\}$, so that the SDE~\eqref{SDE_dcm} still falls into the framework of Theorem~\ref{Thm_TT}. We note also that these functions have bounded derivatives if $b$ and $\sigma$ have bounded derivatives, in which case we have the estimates~\eqref{cont_derivees}  for the SDE~\eqref{SDE_dcm} by Talay and Tubaro~\cite[Lemma 2]{TaTu}.

The infinitesimal generator of~\eqref{SDE_dcm} is given by 
\begin{equation}\label{infgen_SDEext}
  \mathcal{L}f({\bf x})=\sum_{i=0}^{n-1}\sum_{k=1}^d (b_k(x)-\rho_{i+1}) \partial_{k+i\times d}f({\bf x})+ \frac 12 \sum_{i,j=0}^{n-1}\sum_{k,l=1}^d (\sigma\sigma^\top(x))_{k,l} \partial_{k+i\times d} \partial_{l+j\times d}f({\bf x}),
\end{equation}
with ${\bf x}=(x^1,\dots,x^n)$, $x=x_0+\sum_{i=1}^n \gamma_i x^i$ and $f:(\R^d)^n \to \R$ is a twice continuously differentiable function. We use the splitting $\mathcal{L}=\mathcal{L}_1+\mathcal{L}_2$, where $\mathcal{L}_1=- \sum_{i=0}^{n-1}\sum_{k=1}^d \rho_{i+1} \partial_{k+i\times d}f$ is the infinitesimal generator associated to
\begin{align} \label{SDE_dcm_1}
  dX^i_t=-\rho_i  X^i_t dt , i=1,\dots,n,\  t\ge 0,
\end{align}
and $\mathcal{L}_2$ is the infinitesimal generator of the following SDE 
\begin{align} \label{SDE_dcm_2}
  dX^i_t=b(X_t)dt + \sigma(X_t)dW_t \text{ with } X_t=x_0+\sum_{i=1}^n\gamma_iX^i_t,\ \  i=1,\dots,n,\  t\ge 0. 
\end{align}
Equation~\eqref{SDE_dcm_1} is a linear ODE that can be solved exactly, and we note 
\begin{equation}\label{def_psi1}
\psi_1((x^1,\dots,x^n),t)=(x^1e^{-\rho_1 t},\dots,x^n e^{-\rho_n t})  
\end{equation}
the solution. By~\cite[Remark 2.3.7]{AA_book}, it thus satisfies~\eqref{potential_nu} for any $\nu\ge 1$. 
On the other hand, the solution of~\eqref{SDE_dcm_2} can be expressed explicitly from the solution of the SDE with coefficients $G(0)b$ and $G(0)\sigma$. Namely, let $\xi^x$ be the solution of 
 \begin{equation}\label{SDE_xi} \xi^x_t=x+\int_0^t G(0)b(\xi^x_s)ds + \int_0^t G(0)\sigma(\xi^x_s)dW_s,
 \end{equation}
 with $x=x_0+\sum_{i=1}^n \gamma_i x^i$. Then, we check easily that $X^i_t=x^i+\frac{\xi^x_t-x}{G(0)}$, $i\in \{1,\dots,n\}$, is the solution of~\eqref{SDE_dcm_2} starting from $(x^1,\dots,x^n)$ and that $X_t=\xi^x_t$.
 \begin{remark}
  The approximation presented in Subsection~\ref{Subsec_SVEapprox} precisely consists in integrating on each time-step~\eqref{SDE_dcm_1} and then~\eqref{SDE_dcm_2} when $G(t)=\sum_{i=1}^n\gamma_i e^{-\rho_i t}$. Note that the exact integration of~\eqref{SDE_dcm_2} can seldom be implemented in practice. We have proved and used in Section~\ref{Sec_SVEs} strong convergence results, but this scheme leads also in principle to a weak approximation of order one by~\cite[Proposition 2.3.12]{AA_book}. Here, we develop a weak approximation of order two that can be implemented for computational purposes. \blue{It consists in integrating on a half time step~\eqref{SDE_dcm_1}, then on a time step~\eqref{SDE_dcm_2} and again on a half time step~\eqref{SDE_dcm_1}.}
 \end{remark}

 Let us define, for $\bfx=(x^1,\dots,x^n) \in \R^n$ and $y\in \R^d$, 
 \begin{equation}\label{def_Axy}
  A_{\bfx}(y)=\left(x^1+\frac{y-x}{G(0)},\dots,x^n+\frac{y-x}{G(0)} \right),
 \end{equation}
 where $x=x_0+\sum_{i=1}^d \gamma_i x^i$.  The next proposition enables us to get a second order scheme for~$\cL_2$.

\begin{lemma}\label{lem_NV}
The Ninomiya and Victoir scheme of the SDE~\eqref{SDE_dcm_2} is given by 
\begin{equation}\label{def_psi2}
  \psi_2(\bfx,t,Z)= A_{\bfx}(\varphi(x,t,Z)),
\end{equation}
where $x=x_0+\sum_{i=1}^d \gamma_i x^i$ and $\varphi(x,t,Z)$ is the Ninomiya and Victoir scheme of the SDE~\eqref{SDE_xi}. 
\end{lemma}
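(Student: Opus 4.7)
\medskip

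\noindent\textbf{Proof proposal for Lemma~\ref{lem_NV}.} The plan is to show that the Ninomiya--Victoir construction~\eqref{def_NV} for the SDE~\eqref{SDE_dcm_2} commutes with the section $A_{\bfx}$ of the projection $\pi(\bfy):=x_0+\sum_{i=1}^n\gamma_i y^i$. The proof will rest on two elementary algebraic facts, together with a pushforward-of-vector-fields computation. First, I would record that $\pi\circ A_{\bfx}$ is the identity of $\R^d$: for any $y\in \R^d$, one gets $\pi(A_{\bfx}(y))=x+(y-x)=y$ using $\sum_i\gamma_i=G(0)$. Second, $A_{\bfx}$ has the self-consistency property $A_{A_{\bfx}(y)}=A_{\bfx}$, which follows directly from $x^i+(y-x)/G(0)+(z-y)/G(0)=x^i+(z-x)/G(0)$.

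Next I would identify the Stratonovich vector fields of the two SDEs. Writing the Itô equation~\eqref{SDE_dcm_2} in Stratonovich form, the bracket correction in the drift of $X^i_k$ is
$-\tfrac12\sum_{l}\sum_{i',j}\gamma_{i'}\partial_j\sigma_{k,l}(X)\sigma_{j,l}(X)=-\tfrac12 G(0)\sum_{j,l}\partial_j\sigma_{k,l}(X)\sigma_{j,l}(X)$,
which is independent of $i$. Hence the drift and diffusion vector fields $V^{(2)}_0,V^{(2)}_1,\dots,V^{(2)}_d$ of~\eqref{SDE_dcm_2} on $(\R^d)^n$ all take the block-diagonal form $V^{(2)}_l(\bfx)=(v_l(\pi(\bfx)),\dots,v_l(\pi(\bfx)))$ for some $v_l:\R^d\to\R^d$. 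Performing the same computation on~\eqref{SDE_xi} shows that the corresponding Stratonovich vector fields $V^{(\xi)}_0,V^{(\xi)}_1,\dots,V^{(\xi)}_d$ on $\R^d$ are exactly $V^{(\xi)}_l=G(0)\,v_l$. The key intermediate claim is then: if $\phi_l^t$ denotes the flow of $V^{(\xi)}_l$ on $\R^d$ and $\Phi_l^t$ the flow of $V^{(2)}_l$ on $(\R^d)^n$, then
\begin{equation*}
\Phi_l^t(\bfx)=A_{\bfx}(\phi_l^t(\pi(\bfx))).
\end{equation*}
This follows by checking the initial condition and differentiating: for each block $(k,i)$, $\frac{d}{dt}[A_{\bfx}(\phi_l^t(x))]_{k+(i-1)d}=\frac{1}{G(0)}V^{(\xi)}_l(\phi_l^t(x))_k=v_l(\phi_l^t(x))_k$, which by the first identity above equals $V^{(2)}_l(A_{\bfx}(\phi_l^t(x)))_{k+(i-1)d}$. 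Uniqueness for ODEs closes the argument.

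Finally, I would propagate the identity through compositions. Given $\bfx_1=\Phi_{l_1}^{t_1}(\bfx)=A_{\bfx}(\phi_{l_1}^{t_1}(x))$, the self-consistency $A_{\bfx_1}=A_{\bfx}$ combined with $\pi(\bfx_1)=\phi_{l_1}^{t_1}(x)$ gives
\begin{equation*}
\Phi_{l_2}^{t_2}(\Phi_{l_1}^{t_1}(\bfx))=A_{\bfx_1}(\phi_{l_2}^{t_2}(\pi(\bfx_1)))=A_{\bfx}\bigl(\phi_{l_2}^{t_2}(\phi_{l_1}^{t_1}(x))\bigr),
\end{equation*}
and by induction any composition of the flows on $(\R^d)^n$ equals $A_{\bfx}$ applied to the corresponding composition on $\R^d$. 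Since the Ninomiya--Victoir scheme~\eqref{def_NV} is precisely such a composition of flows at times $t/2,\sqrt{t}Z^1,\dots,\sqrt{t}Z^d,t/2$ (ordered according to the sign of $Z^0$), applying this identity with the same Brownian increments $Z$ yields $\psi_2(\bfx,t,Z)=A_{\bfx}(\varphi(x,t,Z))$, which is the claim. The only delicate point in this plan is the Stratonovich conversion: one must check that the drift correction has the same ``block-constant'' structure as the diffusion fields, which is not automatic but turns out to hold precisely because the diffusion coefficients in~\eqref{SDE_dcm_2} depend only on $X=\pi(\bfX)$ and are common to every block.
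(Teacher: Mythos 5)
Your proposal is correct and follows essentially the same route as the paper's proof: verify that the constituent flows of the Ninomiya--Victoir composition commute with $A_{\bfx}$ via ODE uniqueness, then propagate through compositions using $\pi\circ A_{\bfx}=\mathrm{id}$ and $A_{A_{\bfx}(y)}=A_{\bfx}$. The paper is terser and leaves the block-constant structure of the Stratonovich drift correction $V_0$ implicit in its uniform treatment of the vector fields, whereas you spell it out explicitly — a worthwhile clarification but not a different argument.
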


The proof of this Lemma is postponed to Appendix~\ref{App_proof}. We now state the main result of this subsection. For simplicity, we reinforce the assumptions on the coefficients $b$ and $\sigma$ to get automatically the estimates~\eqref{cont_derivees}. 

\begin{theorem}\label{thm_second_order_SVE}
  Let $b:\R^d\to \R^d$ and $\sigma:\R^d\to \mathcal{M}_d(\R)$ be $\mathcal{C}^\infty$ with bounded derivatives, $G(t)=\sum_{i=1}^n \gamma_i e^{-\rho_i t}$ with $0\le \rho_1<\dots<\rho_n$ and $\gamma_i>0$ for $i=1,\dots,n$. 
Let us define the following scheme 
$$\Psi(\bfx ,t,Z):=\psi_1(\psi_2(\psi_1( \bfx,t/2),t,Z),t/2),$$
where $\psi_1$ and $\psi_2$ are respectively given by~\eqref{def_psi1} and~\eqref{def_psi2}. 

Let us define $\hat{\bfX}^N_{0}=0$, $\hat{\bfX}^N_{\ell\frac TN}=\Psi\left(\hat{\bfX}^N_{(\ell-1)\frac TN } ,\frac TN,Z_\ell \right)$ for $\ell\in \{1,\dots,N\}$ and $\hat{X}^N_{\ell \frac TN}=x_0+ \sum_{j=1}^n \gamma_j \hat{X}^{N,j}_{\ell \frac TN }$. Then, we have 
$$\forall f \in \Cpol{\R^d}, \exists K>0, \bar{N} \in \N^*, \ \forall N\ge \bar{N}, |\E[f(\hat{X}^N_T)]-\E[f(X_T)]|\le \frac{K}{N^2}. $$
Besides, if $\mathscr{C}$ is a nonempty closed convex set such that the Ninomiya and Victoir scheme for~\eqref{SDE_xi} used in $\psi_2$ satisfies $\varphi(x,t,Z)\in \mathscr{C}$ a.s. for all $x\in \mathscr{C}$, and if $x_0\in \mathscr{C}$, then we have  $\hat{X}^N_{\ell \frac TN}\in \mathscr{C}$ a.s for all $0\le \ell \le N$.
\end{theorem}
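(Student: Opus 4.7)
For the weak convergence part, the plan is to apply Theorem~\ref{Thm_TT} with $\nu=2$ to the $(nd)$-dimensional SDE~\eqref{SDE_dcm} on the enlarged state $\bfX=(X^1,\dots,X^n)$, together with the lifted test function $\tilde f(\bfx):=f(x_0+\sum_{i=1}^n\gamma_i x^i)$, which belongs to $\Cpol{(\R^d)^n}$ and satisfies $\tilde f(\bfX_T)=f(X_T)$. Since $b,\sigma$ are $\mathcal{C}^\infty$ with bounded derivatives, the drift and diffusion of~\eqref{SDE_dcm} inherit these regularity properties, so Talay and Tubaro's Lemma~2 supplies the estimates~\eqref{cont_derivees} for $u(t,\bfx)=\E[\tilde f(\bfX_{T-t}^\bfx)]$. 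For assumption~(2) of Theorem~\ref{Thm_TT}, I would invoke the scheme-composition result~\cite[Corollary~2.3.14]{AA_book} with the splitting $\cL=\cL_1+\cL_2$: $\psi_1$ is the exact flow of the linear ODE~\eqref{SDE_dcm_1}, hence a potential scheme of every order for $\cL_1$, and by Lemma~\ref{lem_NV}, $\psi_2$ is the Ninomiya--Victoir scheme of~\eqref{SDE_dcm_2}, hence a potential second-order scheme for $\cL_2$. The Strang-type symmetric composition $\Psi=\psi_1(t/2)\circ\psi_2(t)\circ\psi_1(t/2)$ is then a potential second-order scheme for $\cL$. Uniform moment bounds on $(\hat{\bfX}^N_{\ell T/N})_{0\le\ell\le N}$ follow from the contracting nature of $\psi_1$ (whose factors $e^{-\rho_i t/2}$ lie in $(0,1]$), the classical moment estimates for the Ninomiya--Victoir scheme, and Gronwall iteration on the recursion.

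For the stochastic invariance, I exploit the completely monotone structure of $G$ together with Proposition~\ref{prop_pos_gen}, in the same spirit as the proof of Theorem~\ref{thm_SVE_dom}. Let $h=T/N$ and $t_k:=(k-1/2)h$. Denote by $x^{(k,1)}:=x_0+\sum_i\gamma_i e^{-\rho_i h/2}\hat{\bfX}^{N,i}_{(k-1)h}$ the observable just before $\psi_2$ acts at step $k$, by $y_k:=\varphi(x^{(k,1)},h,Z_k)$ the Ninomiya--Victoir output, and set $\delta_k:=y_k-x^{(k,1)}$. A direct induction on $\ell$, using the explicit form of $\psi_1$ and the identity $\psi_2(\bfx,t,Z)=A_\bfx(\varphi(x,t,Z))$ of Lemma~\ref{lem_NV}, yields
\begin{align*}
\hat{X}^N_{\ell h}=x_0+\sum_{k=1}^\ell\frac{\delta_k}{G(0)}\,G(\ell h-t_k),\qquad x^{(k,1)}=x_0+\sum_{k'=1}^{k-1}\frac{\delta_{k'}}{G(0)}\,G(t_k-t_{k'}),
\end{align*}
from which $\alpha_\theta^T y_k+\beta_\theta=\alpha_\theta^T x_0+\beta_\theta+\sum_{k'=1}^k(\alpha_\theta^T\delta_{k'}/G(0))\,G(t_k-t_{k'})$ for any half-space $\{x:\alpha_\theta^T x+\beta_\theta\ge 0\}\supset\mathscr{C}$. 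Writing $\mathscr{C}=\bigcap_{\theta\in\Theta}\{x:\alpha_\theta^T x+\beta_\theta\ge 0\}$ as in the proof of Theorem~\ref{thm_SVE_dom} and proceeding by induction on $K$, the hypotheses $\alpha_\theta^T y_{K'}+\beta_\theta\ge 0$ obtained at steps $K'=1,\dots,K-1$, together with Proposition~\ref{prop_pos_gen} (applicable since $G$ is nonincreasing and nonnegativity-preserving by Theorem~\ref{thm:positivite}), deliver $\alpha_\theta^T x^{(K,1)}+\beta_\theta\ge 0$; then $y_K=\varphi(x^{(K,1)},h,Z_K)\in\mathscr{C}$ by the invariance hypothesis on the Ninomiya--Victoir scheme, which closes the induction at index $K$. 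One last application of Proposition~\ref{prop_pos_gen} at $t=\ell h$ yields $\hat{X}^N_{\ell h}\in\mathscr{C}$ almost surely.

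The main obstacle is technical rather than conceptual: carefully tracking the polynomial growth constants and the uniform moments through the three substeps of $\Psi$ and through Corollary~2.3.14 of~\cite{AA_book} is routine but tedious. Once the convolution identity above is in hand, the invariance follows as a transparent corollary of the nonnegativity-preserving property of completely monotone kernels---the structural message the present paper highlights.
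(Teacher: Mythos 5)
Your proposal is correct and follows essentially the same route as the paper: the weak-error bound comes from the splitting $\cL=\cL_1+\cL_2$, Lemma~\ref{lem_NV} identifying $\psi_2$ as the Ninomiya--Victoir scheme of~\eqref{SDE_dcm_2}, scheme composition, and Theorem~\ref{Thm_TT}; the invariance comes from unwinding $\Psi$ into the convolution identity $\hat{X}^N_{\ell h}=x_0+\sum_{k\le\ell}\frac{\delta_k}{G(0)}G((\ell-k+\tfrac12)h)$ and applying Proposition~\ref{prop_pos_gen} with the half-space representation of $\mathscr{C}$. The only minor difference is that the paper pins down the uniform moment bound via a specific citation (Corollary~2.3.18 of the referenced monograph) where you sketch it heuristically, and the paper's notation for $\delta_l$ suppresses the $1/G(0)$ factor that you display explicitly.
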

\begin{proof}
By Lemma~\ref{lem_NV}, $\psi_2(\bfx,t,Z)$ is the Ninomiya and Victoir scheme for the SDE~\eqref{SDE_dcm_2}, while $\psi_1(\bfx,t)$ is the exact solution of the ODE~\eqref{SDE_dcm_1}. By using~\cite[Theorem 2.3.17 and Corollary 2.3.14]{AA_book}, we get that~\eqref{potential_nu} is satisfied by $\Psi$ for $\cL$ and $\nu=2$, i.e for any $f\in \Cpol{(\R^d)^n}$, there exist $C,E,\eta>0$ that only depend on a good sequence of~$f$ such that
$$\forall t \in (0,\eta), \ \left|\E\left[f(\Psi(\bfx ,t,Z))\right]-f(\bfx)-t \cL f (\bfx) -\frac {t^2}2 \cL^2 f(\bfx) \right|\le C t^3(1+|\bfx|^E).$$ Besides, since $b$ and $\sigma$ have bounded derivatives, \eqref{cont_derivees} is satisfied by~\cite[Lemma 2]{TaTu} and the scheme has uniformly bounded moments~\cite[Corollary 2.3.18]{AA_book}. This gives the first claim by Theorem~\ref{Thm_TT}.

For the second claim, we observe that 
$$\hat{X}^N_{\ell \frac TN}=x_0+\sum_{l=1}^\ell \delta_l G\left((\ell-l+\frac 12)\frac TN \right), $$
with $\delta_l=\varphi(\frac TN,\hat{X}^N_{(l-1/2)\frac TN},Z_l)-\hat{X}^N_{(l-1/2)\frac TN}$, where 
$$\hat{X}^N_{(\ell-1/2)\frac TN}=x_0+\sum_{l=1}^{\ell-1} \delta_l G\left((\ell-l)\frac TN \right).$$
We prove by induction on $\ell$ that $\hat{X}^N_{(\ell-1/2)\frac TN} \in \scrC$. This is true for $\ell=1$ since $x_0 \in \scrC$. Suppose now that $\hat{X}^N_{(\ell-1/2)\frac TN} \in \scrC$. Then, we know by assumption that $\varphi(\frac TN,\hat{X}^N_{(\ell-1/2)\frac TN},Z_\ell) \in \scrC$. Therefore, $x_0+\sum_{l=1}^{\ell} \delta_l G\left((\ell-l)\frac TN \right)=\varphi(\frac TN,\hat{X}^N_{(\ell-1/2)\frac TN},Z_\ell) \in \scrC$. Then, writing $\scrC$ as the intersection of half-spaces (see~\eqref{convex_halfspaces}) and using Proposition~\ref{prop_pos_gen}, we get that $\hat{X}^N_{(\ell+1-1/2)\frac TN} \in \scrC$, and also $\hat{X}^N_{\ell\frac TN} \in \scrC$.
\end{proof}

We have stated Theorem~\ref{thm_second_order_SVE} in a quite general framework. It applies to SVEs with coefficients $b$ and $\sigma$ that are well defined on the whole space~$\R^d$. It also applies to SVEs defined on a closed convex set~$\scrC\subset \R^d$, for which it is possible to extend the coefficients $b:\scrC\to \R^d$ and $\sigma:\scrC \to \mathcal{M}_d(\R)$ to smooth functions on~$\R^d$. This is the case, for example of the "Volterra geometric Brownian motion" (Example~\ref{ex_VGBM}). However, in some other cases such as the square-root diffusion coefficient, this is not possible to do such an extension. The main technical difficulty is then to determine the domain associated to the SDE~\eqref{SDE_dcm}  
$$\mathscr{D}=\{\bfx \in (\R^d)^n: \forall t\ge 0, x_0+\sum_{i=1}^n \gamma_i X^i_t \in \scrC \},$$  
where $dX^i_t=-\rho_iX^i_tdt +b(x_0+\sum_{j=1}^n\gamma_j X^j_t)dt + \sigma(x_0+\sum_{j=1}^n\gamma_j X^j_t)dW_t,\  X^i_0=x^i$ and then get the estimates~\eqref{cont_derivees} for $\bfx \in \mathscr{D}$. This is left for further research, and we just show in the next subsection how to get the estimate~\eqref{potential_nu} with $\nu=2$ for the multifactor Cox-Ingersoll-Ross process. Let us note that  $\mathscr{D}$ is nonempty since $0\in \mathscr{D}$,  and thus the support of the distribution of $(X^1_t,\dots,X^n_t)$ given by~\eqref{SDE_dcm} is included in $\mathscr{D}$ for any $t\ge 0$.

\begin{remark}
  It is interesting to notice that the SVE~\eqref{SVE_scheme} may be well defined on the closed convex set $\mathscr{C}$ while the closed convex set $\{ (x^1,\dots,x^n) \in (\R^d)^n: x_0 +\sum_{i=1}^n \gamma_i x^i \in \mathscr{C} \}$ is not stochastically invariant by the SDE~\eqref{SDE_dcm}. To illustrate this, let us take Example~\ref{ex_VGBM} with $x_0>0$, $\mu=0$, $\sigma>0$ and $G(t)=\gamma_1 e^{-\rho_1 t}+\gamma_2 e^{-\rho_2 t}$, with $\gamma_1,\gamma_2>0$ and $0<\rho_1<\rho_2$. Let $x^1,x^2\in \R$ such that $x_0+\gamma_1x^1+\gamma_2x^2\ge 0$, and
  \begin{equation}\label{volterra_geom_2}
  X^i_t=x^i - \rho_i \int_0^t X^i_s ds +  \int_0^t \sigma \times (x_0+\gamma^1X^1_s+\gamma^2X^2_s)dW_s,\ i\in \{1,2\}.
  \end{equation}
  We have by straightforward calculations $\frac{d}{dt}\big|_{t=0}\E[x_0+\gamma^1X^1_t+\gamma^2X^2_t]=-\rho_1\gamma^1x^1-\rho_2\gamma^2x^2$. For $\gamma_2x^2=-(x_0+\gamma_1x^1)$, this derivative is equal to $(\rho_2-\rho_1)\gamma^1x^1+\rho_2 x_0$ and gets negative when $x^1\to -\infty$, which shows that  $\{ (x^1,x^2) \in (\R^d)^2: x_0 +\sum_{i=1}^2 \gamma_i x^i \ge 0 \}$ is not stochastically invariant by the SDE~\eqref{volterra_geom_2}. \blue{More generally, the question of the stochastic invariance of the Markovian lift in a quite general setting has been addressed by Abi Jaber and El Euch \cite[Theorem 4.1]{AJEE2}.}
\end{remark}

\subsection{Second order schemes for the multifactor Cox-Ingersoll-Ross and Heston models}

We still consider the kernel $G(t)=\sum_{i=1}^n \gamma_i e^{-\rho_i t}$ with $0\le \rho_1<\dots<\rho_n$ and $\gamma_i>0$ for $i=1,\dots,n$, and focus on the following SVE: 
\begin{equation}\label{multifactor_CIR}
  X_t=x_0+\int_0^t G(t-s)[a-kX_s]ds +\int_0^tG(t-s) \sigma \sqrt{X_s}dW_s, 
\end{equation}
where $x_0,a\ge 0$, $k\in \R$, $\sigma>0$ and $W$ is a one-dimensional Brownian motion. Abi Jaber and El Euch have shown that there exists a unique strong nonnegative solution, see~\cite[Theorem 3.1]{AJEE}. 

We proceed as in the previous subsection and consider the associated SDE:
$$X^i_t=-\rho_i \int_0^t X^i_s ds+\int_0^t [a-kX_s]ds+\int_0^t \sigma \sqrt{X_s}dW_s,$$
with $X_t=x_0+\sum_{i=1}^n \gamma_i X^i_t$. Again, we split its infinitesimal generator~$\cL=\cL^1+\cL^2$, where $\cL^1$ and $\cL^2$ are respectively the generators associated to the SDE~\eqref{SDE_dcm_1} and the following SDE:
\begin{equation}\label{SDE_CIR_n} dX^i_t=[a-kX_t]dt+\sigma \sqrt{X_t}dW_t, \text{ with } X_t=x_0+\sum_{i=1}^n \gamma_i X^i_t. 
\end{equation}
For an initial condition~$\bfx=(x^1,\dots,x^n) \in \R^n$ such that $x=x_0+\sum_{i=1}^n \gamma_i \ge 0$, the solution of this SDE is given by $A_{\bfx}(\xi^x_t)$, with $A_{\bfx}$ defined by~\eqref{def_Axy} and $\xi^x$ is the Cox-Ingersoll-Ross process:
\begin{equation}\label{SDE_CIR}
   \xi^x_t=x+\int_0^t G(0)[a-k\xi^x_s]ds + \int_0^t G(0) \sigma \sqrt{\xi^x_s}dW_s.
  \end{equation}
The exact simulation of~$\xi^x_t$ is possible (see e.g. Alfonsi~\cite[Proposition 3.1.1]{AA_book}) but has some computational cost while the Ninomiya and Victoir scheme is only well defined for $G(0)\sigma^2\le 4a$ as pointed in Alfonsi~\cite{AA_MCOM}. Thus, it has been proposed in~\cite{AA_MCOM} the following scheme 
\begin{equation}\label{AA_scheme}\varphi(x,t,U)=\begin{cases}  e^{-\frac{ \bar{k} t}{2}}\left(\sqrt{(\bar{a}-\frac{\bar{\sigma}^2}4)\zeta_{\bar{k}}(\frac t 2 )
  +e^{-\frac{\bar{k}t}{2}} x }
+   \frac{\bar{\sigma}}{2} w(U) \right)^2 +(\bar{a}-\frac{\bar{\sigma}^2}4)\zeta_{\bar{k}}(\frac t 2) \text { if } x\ge \bar{\bf K}_2(t)\\
\mathbf{1}_{\{U \le \pi(t,x) \}}
\frac{\bar{u}_1(t,x)}{2 \pi(t,x)} + \mathbf{1}_{\{U >
  \pi(t,x) \}}\frac{\bar{u}_1(t,x)}{2(1- \pi(t,x))},\text { if } x < \bar{\bf K}_2(t)
 \end{cases}\end{equation}
 with $U$ uniformly distributed on $[0,1]$, $\bar{a}=G(0)a$, $\bar{k}=G(0)k$, $\bar{\sigma}=G(0)\sigma$, $\zeta_k(t)=\frac{1-e^{-kt}}{k}$ for $k\not=0$ and $\zeta_0(t)=t$,  $w(u)=\sqrt{3}\left(\mathbf{1}_{u> 5/6}-\mathbf{1}_{u\le 1/6}\right)$, $\pi(t,x)=\frac{1-\sqrt{ 1-\frac{\bar{u}_1(t,x)^2}{ \bar{u}_2(t,x)}}}{2}$ with
 \begin{equation}\label{2mom_CIR} 
  \bar{u}_1(t,x)  =x e^{-\bar{k}t}+\bar{a} \zeta_{\bar{k}}(t) 
  \text{ and }
\bar{u}_2(t,x)  =\bar{u}_1(t,x)^2 +\bar{\sigma}^2 \zeta_{\bar{k}}(t)[ \bar{a} \zeta_{\bar{k}}(t)/2+ xe^{-\bar{k}t}   ] , \end{equation}
and 
 $$\bar{\mathbf{K}}_2(t)=\mathbf{1}_{\{\bar{\sigma}^2 > 4\bar{a} \}}e^{\frac{\bar{k}t}{2}} \left( \left(\frac{\bar{\sigma}^2}{4}-\bar{a}\right) \zeta_{\bar{k}}\left( \frac t2\right) + \left[
  \sqrt{e^{\frac{\bar{k}t}{2}} \left(\frac{\bar{\sigma}^2}{4}-\bar{a}\right) \zeta_{\bar{k}}\left(\frac t2 \right)   } +  \frac{\bar{\sigma}}{2}  \sqrt{3t} \right]^2 \right).$$
\begin{prop}\label{prop_multifactorCIR}
Let $\bfx \in \R^n$ be such that $x=x_0+\sum_{i=1}^n \gamma_i x^i\ge 0$, and let us define $\psi_2(\bfx, t,U)=A_{\bfx}(\varphi(x,t,U))$ with $\varphi(x,t,U)$ given by~\eqref{AA_scheme}. Then, for any $f\in \Cpol{\R^n}$, there exist $C,E,\eta$ that only depend on a good sequence of~$f$ such that
\begin{align} \forall t\in (0,\eta), &\forall\bfx \in \R^n \ s.t. \ x_0+\sum_{i=1}^n \gamma_i x^i\ge 0, \notag \\
  & \left|\E \left[ f(\psi_2(\bfx, t,U))\right]-f(\bfx)-t \cL_2f(\bfx) -\frac {t^2}2\cL_2f(\bfx) \right|\le Ct^3(1+|\bfx|^E). \label{pot_2}
\end{align}
Let $\bar{\scrC}=\{\bfx \in \R^n: \forall t\ge 0, x_0+\sum_{i=1}^n \gamma_i e^{-\rho_i t}x^i \ge 0 \}$. For $\bfx \in \bar{\scrC}$, $t\ge 0$, we define 
\begin{equation}\label{scheme_multifactorCIR}
  \Psi(\bfx ,t,U):=\psi_1(\psi_2(\psi_1( \bfx,t/2),t,U),t/2).\end{equation}
Then, for any $f\in \Cpol{\R^n}$, there exist $C,E,\eta$ that only depend on a good sequence of~$f$ such that
$$  \forall t\in (0,\eta), \forall\bfx \in \bar{\scrC},\  \left|\E \left[ f(\Psi(\bfx, t,U))\right]-f(\bfx)-t \cL f(\bfx) -\frac {t^2}2\cL f(\bfx) \right|\le Ct^3(1+|\bfx|^E).$$
\end{prop}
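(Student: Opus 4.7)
The plan is to reduce the second-order estimate for the $n$-dimensional scheme $\psi_2$ to the known one-dimensional potential order-2 property of the Alfonsi CIR scheme $\varphi$ via a change of coordinates, and then to combine with the exact ODE flow $\psi_1$ through Strang splitting.

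First, because the SDE~\eqref{SDE_CIR_n} has identical drift and diffusion on all coordinates, if we start it from $\bfx=(x^1,\dots,x^n)$ with $x:=x_0+\sum_{i=1}^n\gamma_i x^i\ge 0$, then $X^i_t-x^i$ is the same for all $i$ and a direct computation shows that $\mathbf{X}^\bfx_t=A_\bfx(\xi^x_t)$, where $\xi^x$ solves~\eqref{SDE_CIR}. Consequently, for any smooth $f$,
\begin{equation*}
\E[f(\mathbf{X}^\bfx_t)]=\E[g_\bfx(\xi^x_t)], \qquad g_\bfx(y):=f(A_\bfx(y)).
\end{equation*}
Differentiating in $t$ at $t=0$ up to order~$2$ yields the key operator identities $\mathcal{L}_2 f(\bfx)=Lg_\bfx(x)$ and $\mathcal{L}_2^2 f(\bfx)=L^2 g_\bfx(x)$, where $L$ is the CIR generator of~\eqref{SDE_CIR}. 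Moreover $g_\bfx(x)=f(A_\bfx(x))=f(\bfx)$.

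Next, since $\E[f(\psi_2(\bfx,t,U))]=\E[g_\bfx(\varphi(x,t,U))]$ by definition of~$\psi_2$, I would apply to $g_\bfx$ the second-order potential property of the CIR scheme $\varphi$ proved in~\cite{AA_MCOM}. Using the three identities above, this gives, for $x\ge 0$,
\begin{equation*}
\Bigl|\E[f(\psi_2(\bfx,t,U))]-f(\bfx)-t\,\mathcal{L}_2 f(\bfx)-\tfrac{t^2}{2}\mathcal{L}_2^2 f(\bfx)\Bigr|\le \tilde C(\bfx)\,t^3(1+|x|^{\tilde E}),
\end{equation*}
where the constants depend on a good sequence of~$g_\bfx$. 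Since $A_\bfx$ is affine with slope $1/G(0)$, the chain rule gives $\partial_y^k g_\bfx(y)=G(0)^{-k}\sum_{i_1,\dots,i_k}\partial_{x^{i_1}}\cdots\partial_{x^{i_k}}f(A_\bfx(y))$, and $|A_\bfx(y)|\le |\bfx|+\sqrt{n}\,|y-x|/G(0)$, so a good sequence of $f\in\Cpol{\R^n}$ produces a good sequence of~$g_\bfx$ whose constants are polynomial in $|\bfx|$. Combining with $|x|\le C(1+|\bfx|)$ converts the above bound into the required form $Ct^3(1+|\bfx|^{E})$ with $C,E,\eta$ independent of~$\bfx$, which proves~\eqref{pot_2}.

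For the second claim, $\psi_1$ is the exact flow of the linear ODE~\eqref{SDE_dcm_1} and is therefore trivially a potential scheme of any order for~$\mathcal{L}_1$. The Strang-splitting composition principle \cite[Corollary 2.3.14]{AA_book} then gives that $\Psi=\psi_1(\cdot,t/2)\circ\psi_2(\cdot,t,U)\circ\psi_1(\cdot,t/2)$ is a potential scheme of order~$2$ for $\mathcal{L}=\mathcal{L}_1+\mathcal{L}_2$, with the polynomial-in-$\bfx$ remainder obtained by propagating the bookkeeping of the previous paragraph through the composition. The only domain check is that $\psi_2$ can be applied at $\psi_1(\bfx,t/2)$ for $\bfx\in\bar\scrC$: the $x$-value of $\psi_1(\bfx,t/2)$ is $x_0+\sum_i\gamma_i e^{-\rho_i t/2}x^i$, which is nonnegative by the definition of~$\bar\scrC$ taken at $s=t/2$. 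The main obstacle is the uniform-in-$\bfx$ control of the constants when transferring the one-dimensional CIR estimate to the $\bfx$-parameterized family $\{g_\bfx\}$; once this technical tracking is done, everything else is a direct application of the existing results.
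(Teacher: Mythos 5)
Your proposal is correct and rests on the same affine reduction $\psi_2(\bfx,t,U)=A_\bfx(\varphi(x,t,U))$ and the identities $\mathcal{L}_2 f(\bfx)=Lg_\bfx(x)$, $\mathcal{L}_2^2 f(\bfx)=L^2 g_\bfx(x)$ (with $g_\bfx=f\circ A_\bfx$) that underlie the paper's argument as well, via Lemma~\ref{lem_NV}. The difference is organizational. You invoke the one-dimensional potential second-order estimate for the Alfonsi CIR scheme $\varphi$ from~\cite{AA_MCOM} as a black box, applied to the $\bfx$-parameterized family $\{g_\bfx\}$, and then propagate a good sequence of $f$ through the affine change to obtain constants polynomial in $|\bfx|$. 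The paper instead re-derives the $n$-dimensional estimate directly, mirroring the two-case structure of the one-dimensional proof: for $x\ge\bar{\mathbf{K}}_2(t)$ it identifies $\psi_2$ as a Ninomiya--Victoir scheme with moment-matching innovations and cites \cite[Corollary 2.3.20]{AA_book}; for $x<\bar{\mathbf{K}}_2(t)$ it Taylor-expands $f\circ A_\bfx$ around $\bfx$ and compares $\E[f(A_\bfx(\varphi(x,t,U)))]$ with $\E[f(A_\bfx(\xi^x_t))]$ using the matching of the first two moments $\bar u_1,\bar u_2$ and the $O(t^q)$ bounds on $\E[|\varphi(x,t,U)-x|^q]$ when $x=O(t)$. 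What the paper's direct route buys is that one never has to check how the constants of the one-dimensional result depend on the good sequence; what your route buys is conceptual economy, at the cost of the ``technical tracking'' you flag, namely verifying that in the one-dimensional estimate the exponent $E$ and the threshold $\eta$ depend only on the exponents $(e_\alpha)$ of the good sequence (so they are $\bfx$-independent) while the multiplicative constant depends at most polynomially on the $(C_\alpha)$ (so it is polynomial in $|\bfx|$). That verification is routine but genuinely needed, since otherwise the bound $C(\bfx)t^3(1+|x|^E)$ does not yield a uniform $Ct^3(1+|\bfx|^E)$. Both proofs are valid; they are two bookkeeping strategies for the same reduction.
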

The proof of this result is technical and relies heavily on the properties of the scheme~\eqref{AA_scheme}. It is postponed to Appendix~\ref{App_proof}.

We now turn to the simulation of the multifactor Heston model:
$$dY_t=(r-\frac{1}2 X_t)dt+ \sqrt{X_t}(\varrho dW_t+\sqrt{1-\varrho^2}dW^\bot_t),$$
where $X_t=x_0+\sum_{i=1}^n \gamma_i X^i_t$ and $(X^1,\dots,X^n)$ solves the SDE~\eqref{SDE_CIR_n}, $r\in \R$, $\varrho \in [-1,1]$ and $W^\bot$ is a Brownian motion independent of~$W$. The asset price value is given by $S_t=e^{Y_t}$, and $X_t$ is its instantaneous volatility. This model has been developed by Abi Jaber and El Euch~\cite{AJEE} and can be seen under a suitable choice of $G(t)=\sum_{i=1}^n\gamma_i e^{-\rho_i t}$ as an approximation of the rough Heston model proposed by El Euch and Rosenbaum~\cite{EER}. Let us note $\cL^H$ the infinitesimal generator of this SDE in dimension $(n+1)$. We use a similar splitting and write $\cL^H=\cL_1+\cL_2^H$, where $\cL_1$ is still the infinitesimal generator of the ODE~\eqref{SDE_dcm_1} and $\cL_2^H$ is the generator of the SDE 
$$\begin{cases}
  dY_t&=(r-\frac{1}2 X_t)dt+ \sqrt{X_t}(\varrho dW_t+\sqrt{1-\varrho^2}dW^\bot_t),\\
dX^i_t&=(a-kX_t)dt+\sigma \sqrt{X_t}dW_t, \ 1\le i\le n,\ X_t=x_0+\sum_{i=1}^n\gamma_i X^i_t. 
\end{cases}
$$
For $y>0$ and $\bfx \in \R^n$ such that $x=x_0+\sum_{i=1}^n\gamma_i x^i\ge 0$, this SDE boils down to the Heston SDE
$$\begin{cases}
  Y_t&=y+\int_0^t(r-\frac{1}2 \xi^x_t)dt+ \int_0^t \sqrt{\xi^x_s}(\varrho dW_s+\sqrt{1-\varrho^2}dW^\bot_s),\\
\xi^x_t&=x+\int_0^t G(0)(a-k\xi^x_s) ds+\int_0^t  G(0)\sigma \sqrt{\xi^x_s}dW_s, 
\end{cases}
$$
and $(X^1_t,\dots,X^n_t)=A_{\bfx}(\xi^x_t)$. Thus, by using the potential second order scheme for the Heston model given in~\cite{AA_MCOM}, we get a potential second order scheme for~$\cL_2^H$ and then for $\cL^H$. More precisely, this scheme is given by 
\begin{align}\label{scheme_L2H}
  &\bar{\psi_2}((\bfx,y),t,Z)=\Bigg(A_{\bfx}(x'), \\
  &y+\left(r-\frac{\varrho a}{\sigma} \right)t + \left(\frac{\varrho k}{\sigma} - \frac 12\right) \frac{x+x'}{2}t  +\frac{\varrho}{\bar{\sigma}}(x'-x)+\sqrt{x+Z_3 (x'-x)}\sqrt{t(1-\rho^2)}Z_2  \Bigg),\notag 
\end{align}
with $x=x_0+\sum_{i=1}^n \gamma_i x^i$, $x'=\varphi(x,t,Z^1)$ (see Eq.~\eqref{AA_scheme}), and where $Z=(Z^1,Z^2,Z^3)$ are three independent random variables with $Z^1$ uniformly distributed on $[0,1]$, $Z^2$ is a standard normal variable and $Z^3$ is a Bernoulli variable of parameter $1/2$. Setting $\bar{\psi}_1((\bfx,y),t)=(\psi_1(\bfx,t),y)$ with $\psi_1$ given by~\eqref{def_psi1}, we can then define the potential second order scheme for the multifactor Heston model
\begin{equation}\label{scheme_multifactorHeston}
  \bar{\Psi}((\bfx,y),t,Z)=\bar{\psi}_1(\bar{\psi}_2(\bar{\psi}_1((\bfx,y),t/2),t,Z),t/2).
\end{equation}
This scheme extends the scheme~\eqref{scheme_multifactorCIR}, and we could prove that it is a potential second order scheme for $\cL^H$, in the same way as for Proposition~\ref{prop_multifactorCIR}.

\subsection{Numerical results}
We present numerical results for the second order schemes presented in the previous subsection. We consider the following parameters
\begin{equation}\label{parameters}
   x_0=0.02,\ a=0.02,\ k=0.3,\ \sigma=0.3,\ \varrho=-0.7,\ S_0=1,\ r=0,
\end{equation} 
that are the same taken by Abi Jaber and El Euch~\cite{AJEE} and Richard et al.~\cite{RTY} for the rough Heston model.
We consider the time horizon $T=1$, and define the scheme by $\hat{\bfX}^N_{0}=0$, $\hat{Y}^N_0=\ln(S_0)$,  
$$(\hat{\bfX}^N_{\ell\frac TN},\hat{Y}^N_{\ell\frac TN})=\bar{\Psi}\left((\hat{\bfX}^N_{(\ell-1)\frac TN },Y^N_{(\ell-1)\frac TN }) ,\frac TN,Z_\ell \right), \ \ell\in \{1,\dots,N\},$$ 
and $\hat{X}^N_{\ell \frac TN}=x_0+ \sum_{j=1}^n \gamma_j \hat{X}^{N,j}_{\ell \frac TN }$. Let us emphasize that we have $\hat{X}^N_{\ell \frac TN}\ge 0$ for all~$\ell$, similarly as in Theorem~\ref{thm_second_order_SVE} with $\scrC=\R_+\times \R$. The function $\bar{\Psi}$ is given by~\eqref{scheme_multifactorHeston}, and $(Z_\ell)_{0\le \ell \le N}$ is an i.i.d. sequence of random variables with coordinates $Z^1_\ell\sim \mathcal{U}([0,1])$, $Z^2_\ell\sim \mathcal{N}(0,1)$, $Z^3_\ell\sim \mathcal{B}(1/2)$ that are independent.

 We first illustrate the second order convergence and the following values\footnote{These values have been obtained by using~\cite[Proposition 5.3 (1)]{AK} with $n=5$ to fit the fractional kernel with $H=0.4$.  Sharper approximation of the fractional kernel are available, but would they imply much larger values of $\rho$'s.} for the kernel:
\begin{equation*}
  \begin{tabular}{|r|r|r|r|r|}
    \hline
    $\rho_1  =0.06588906$ & $\rho_2=1.04979236$ &$\rho_3=1.78996945$ &$\rho_4=2.52111261$ &$\rho_5=3.24938890$ \\ \hline
    $\gamma_1=0.95998879$ & $\gamma_2=0.06890172$ &$\gamma_3=0.04257523$ &$\gamma_4=0.03127181$ &$\gamma_5=0.02488347$ \\ \hline
  \end{tabular}
\end{equation*} 
To visualize the \blue{asymptotic} second order convergence, it is important indeed to use coefficients with $\rho_n$ that is not too large. Heuristically, due to the exponential factor $e^{-\rho_i T/N}$, the effect of the ith coordinate starts to be non negligible when $T/N<1/\rho_i$. Since the $\rho$'s are ordered, \blue{this means that there is a transitory regime for $1/\rho_n<T/N<1/\rho_1$ and the asymptotic regime appears only when $T/N<1/\rho_n$.} Therefore, one may hope to see the \blue{asymptotic} second order convergence only when  $T/N<1/\rho_n$, which requires considerable computation time when $\rho_n$ is large \blue{since we have to run a sufficiently large number of simulations to get the statistical error smaller than the bias. However, this does not mean that the scheme is not efficient in the non-asymptotic regime $\rho_n<T/N<\rho_1$. Table~\ref{table_1} below show indeed very good approximation results with very large values of $\rho_n$.   }

Figure~\ref{Fig_convergence} plots, in function of the time step $T/N$, the empirical means of $e^{-\hat{X}^N_T/x_0}$~(Subfigure~\ref{fig:CVLaplacevol}), $(e^{\hat{Y}^N_T}-S_0)^+$ (Subfigure~\ref{fig:CVcall}) and $\frac{\hat{X}^N_T}{x_0}(e^{\hat{Y}^N_T}-S_0)^+$  (Subfigure~\ref{fig:CVSpecialcall}) calculated with $4\times 10^7$ samples. 
These functions correspond respectively to the value of the Laplace transform of $X_T$ at $1/x_0$, the value of the call option with strike $S_0$ and the value of a volatility scaled call option with the same strike.    
For each point, the corresponding 95\% confidence interval is indicated in Figure~\ref{Fig_convergence} by a red segment. \blue{From our theoretical results, we expect that the convergence to the exact value is in $O((T/N)^2)$ as $T/N \to 0$. This is indeed what we observe. In Subfigures~\ref{fig:CVLaplacevol}  and~\ref{fig:CVSpecialcall}, the plots even look like a parabola. In Subfigure~\ref{fig:CVcall}, the plot is also in line with a quadratic behaviour with respect to the time step, for $T/N \le 0.2$.  }
  
\begin{figure}[h]
  \centering
  \begin{subfigure}[h]{0.48\textwidth}
    \centering
    \includegraphics[width=\textwidth]{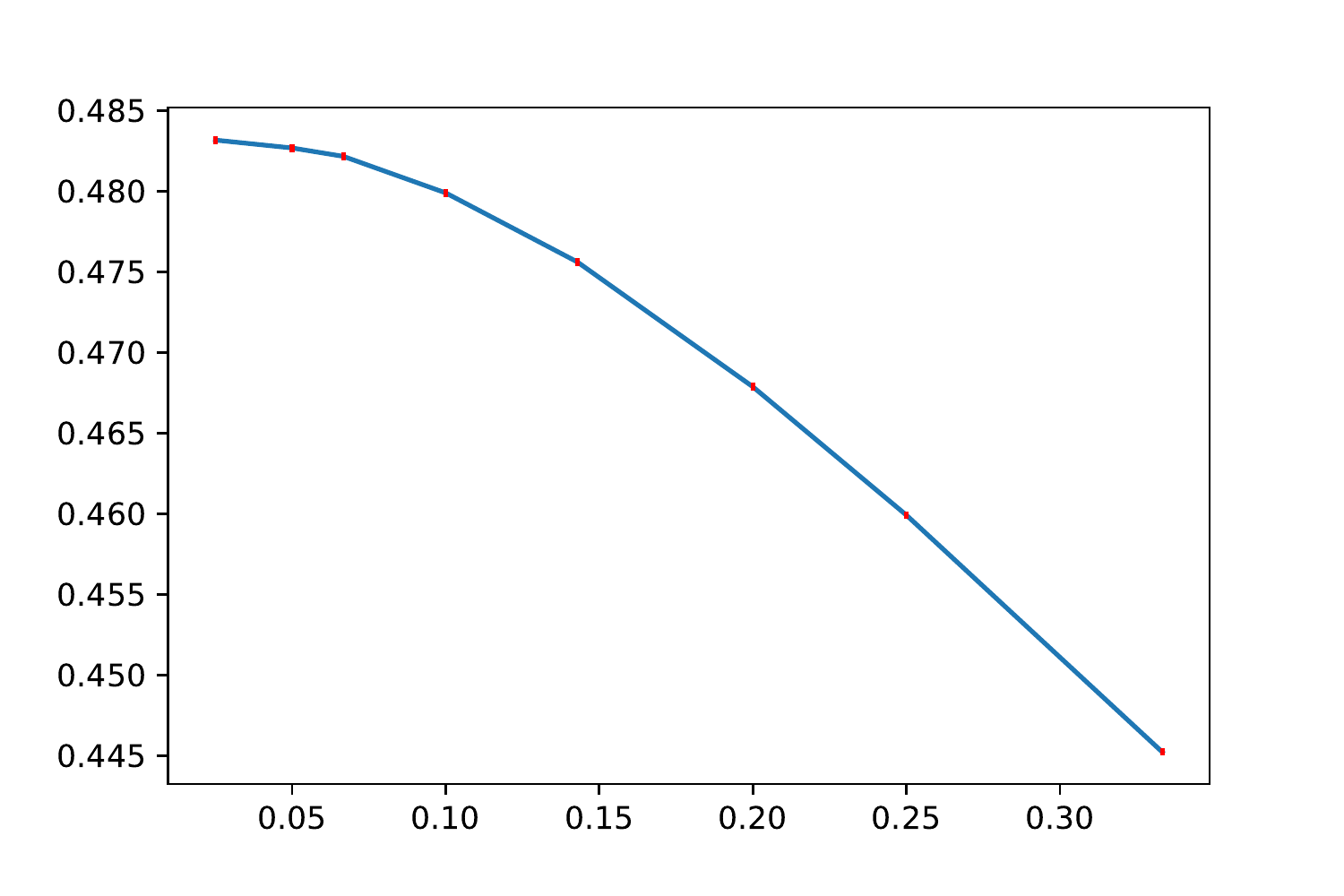}
    \caption{Empirical mean of $e^{-\hat{X}^N_T/x_0}$}
    \label{fig:CVLaplacevol}
  \end{subfigure}
  \hfill
  \begin{subfigure}[h]{0.48\textwidth}
    \centering
    \includegraphics[width=\textwidth]{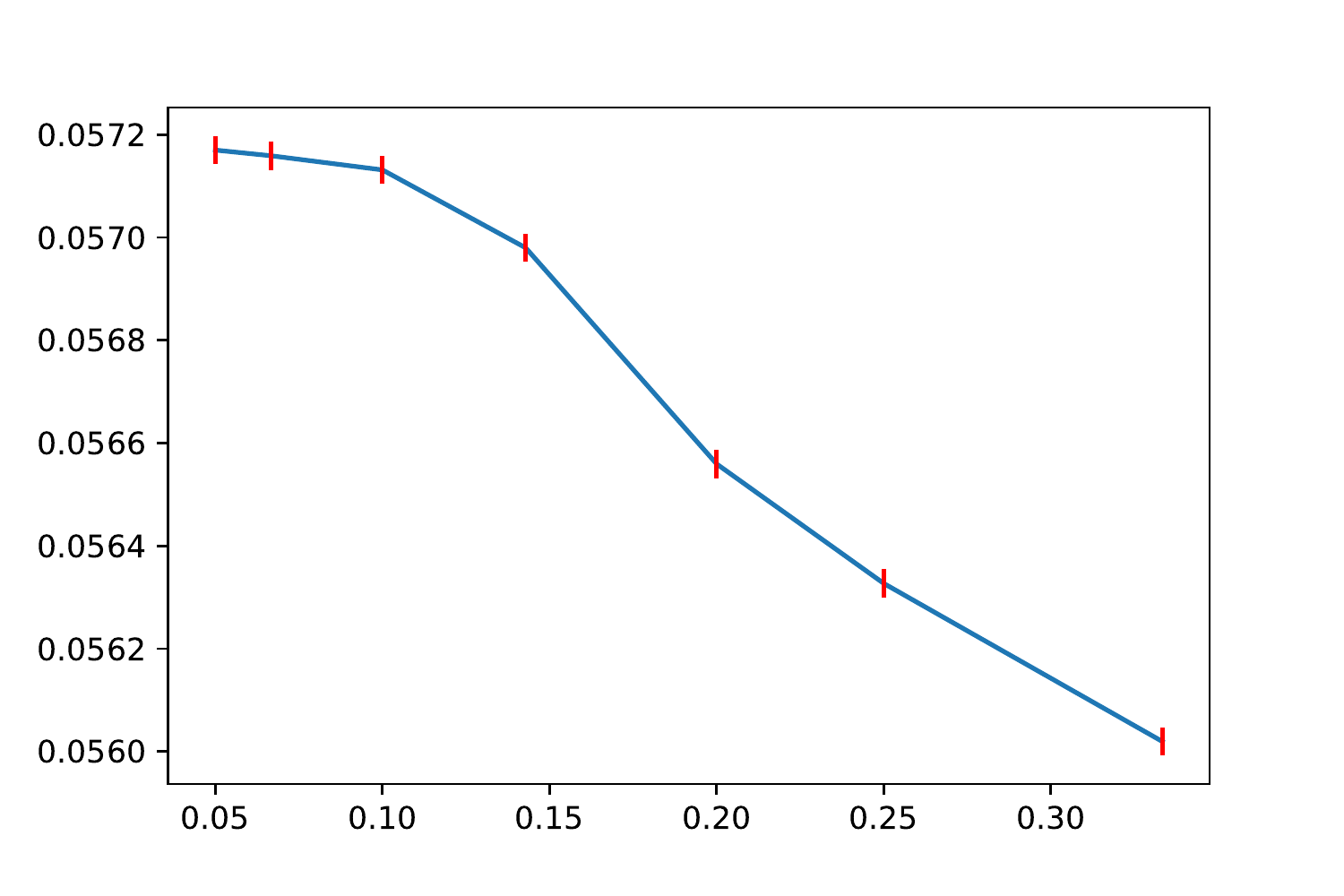}
    \caption{Empirical mean of $(e^{\hat{Y}^N_T}-S_0)^+$}
    \label{fig:CVcall}
  \end{subfigure}
  \hfill
  \begin{subfigure}[h]{0.5\textwidth}
    \centering
    \includegraphics[width=\textwidth]{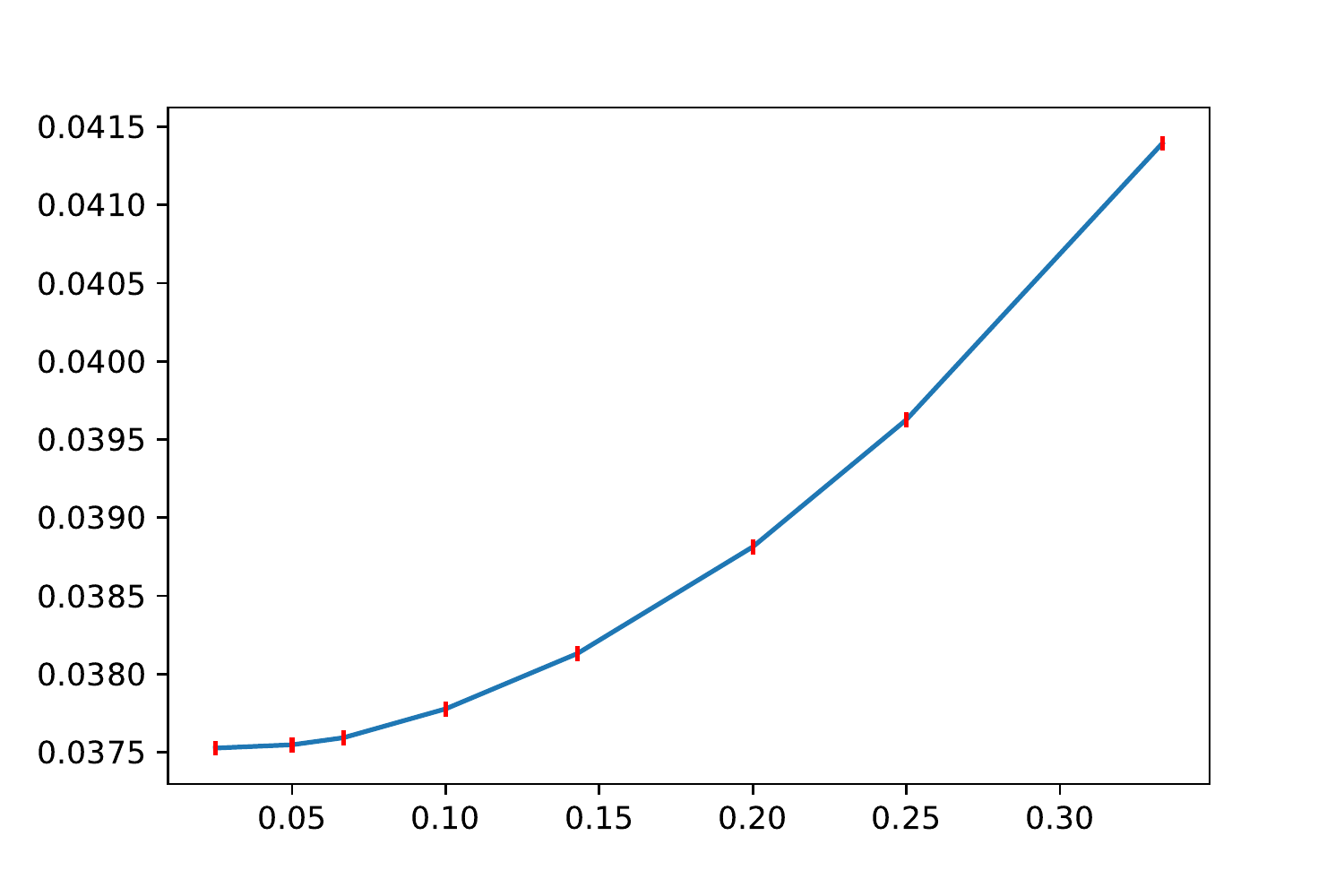}
    \caption{Empirical mean of $\frac{\hat{X}^N_T}{x_0}(e^{\hat{Y}^N_T}-S_0)^+$}
    \label{fig:CVSpecialcall}
  \end{subfigure}
  \caption{Convergence of three different empirical means in function of the time step $T/N$, with $T=1$.}\label{Fig_convergence}
\end{figure}

We now compare the convergence of our second order scheme with the Euler scheme. To do so, we consider the same numerical example as Richard et al.~\cite{RTY} and consider the pricing of a European call option in the rough Heston model with parameters~\eqref{parameters} and the fractional kernel $G_H(t)=t^{H-1/2}/\Gamma(H+1/2)$ with $H=0.1$. The Volterra Euler scheme studied by~\cite{RTY} is given by
\begin{equation}\label{Euler_Volterra}
  \begin{cases}
  Y^{E,N}_{\ell \frac{T}{N}}=Y^{E,N}_{(\ell-1) \frac{T}{N}}+(r-\frac 12 (X^{E,N}_{(\ell-1) \frac{T}{N}})_+) \frac TN +\sqrt{(X^{E,N}_{(\ell-1) \frac{T}{N}})_+} \left(\rho \Delta W_{\ell\frac T N} + \sqrt{1-\rho^2} \Delta W^\bot_{\ell\frac T N} \right)\\
  X^{E,N}_{\ell \frac{T}{N}}=x_0+\sum_{l=0}^{\ell-1} G_H\left((\ell-l)\frac{T}{N}\right)\left[ \left(a-k ( X^{E,N}_{l \frac{T}{N}})_+\right) \frac{T}{N} + \sigma \sqrt{(X^{E,N}_{l \frac{T}{N}})_+} \Delta W_{(l+1)\frac T N}\right],  
  \end{cases}
\end{equation}
for $\ell \in \{1,\dots,N\}$, with $\Delta W_{\ell\frac T N}=W_{\ell \frac{T}{N}}-W_{(\ell-1) \frac{T}{N}}$ and $\Delta W^\bot_{\ell\frac T N}=W^\bot_{\ell \frac{T}{N}}-W^\bot_{(\ell-1) \frac{T}{N}}$. One important drawback of the Volterra Euler scheme is that it requires a computation time proportional to $N^2$. However, as noticed in Alfonsi and Kebaier~\cite[Theorem 4.2]{AK}, when the kernel is a linear combination of exponential, this scheme coincides with the Euler scheme of a multidimensional SDE and can then be calculated with a computation time proportional to~$N\times n$, where $n$ is the number of exponential function involved in the combination. Here, to approximate the fractional kernel, we use the linear combination $G(t)=\sum_{i=1}^n \gamma_i e^{-\rho_i t}$ given by~\cite[Equation (6.4)]{AK} with $n=30$. However, this approximation involves very large values of $\rho$ that are useful in the minimization of $\int_0^T(G_H(t)-G(t))^2dt$, but useless for the approximation scheme. Heuristically, the coordinate corresponding to $\rho_i$ can be correctly approximated if $\rho_i T/N < 1$, and is negligible if $\rho_i T/N >> 1$ because of the exponential factor $\exp(-\rho_i T/N)$. For this reason, we keep only the $n=17$ first exponential factors.  
{\scriptsize
 \begin{table}
   \begin{tabular}{|r||l|l|l||l|l|l||l|l|l|}
     \hline
     $N$& Mean  & 95\% prec. & Time (s) & Mean   & 95\% prec. & Time (s)& Mean   & 95\% prec. & Time (s)  \\
     \hline
     10 & 0.05447 & 1.4e-4  & 5 & 0.05929   & 1.5e-4   & 5   & 0.05919   &  1.5e-4   &  3 \\
     20 & 0.05548 & 1.4e-4  & 9 & 0.05865  & 1.5e-4   & 10  & 0.05868    &  1.5e-4  & 13 \\
     40 & 0.05614 & 1.4e-4  & 18 & 0.05831  & 1.4e-4   & 21 & 0.05845  & 1.4e-4  & 50 \\
     80 & 0.05646 & 1.4e-4  & 36 & 0.05812 & 1.4e-4  & 42  & 0.05814  & 1.4e-4  & 198 \\
     160 & 0.05672 & 1.4e-4  & 72 & 0.05783 & 1.4e-4   & 79 & 0.05780  & 1.4e-4 & 745 \\
     320 & 0.05674 & 1.4e-4  & 147 & 0.05735 & 1.4e-4 & 157 & 0.05783 & 1.4e-4  & 3136\\ 
     \hline
   \end{tabular}
    \vspace{0.1cm}
   \caption{Price of the European call option in the rough Heston model by using the second order scheme (left), the multifactor Euler scheme (middle) and the Volterra Euler scheme (right). }\label{table_1}
 \end{table}
}

We have reported in Table~\ref{table_1} the results obtained with the different schemes. The approximated exact value obtained by using Fourier pricing techniques is $0.05683$. As already remarked in Alfonsi and Kebaier~\cite{AK}, there is almost no difference between the values obtained by the Euler scheme and by the Euler scheme that uses the kernel approximation. The latter one however saves significantly computation time.  Between our second order approximation scheme and the Euler scheme that both use the same multifactor approximation of the Rough kernel, there is almost no difference of computation time. The second order scheme has a much lower bias and outperforms the Euler scheme: the exact value is in the confidence interval from $N=160$, while it is still well outside for the Euler scheme with $N=320$. 

\appendix
\section{Technical proofs of Section~\ref{Sec_approx}}\label{App_proof}
\begin{proof}[Proof of Lemma~\ref{lem_NV}]
  We introduce the operators $V_k f(x)= G(0) \sum_{j=1}^k \sigma_{j,k}(x) \partial_j f(x)$ (resp. $\mathcal{V}_k f (\bfx) =\sum_{i=0}^{n-1} \sum_{j=1}^k \sigma_{j,k}(x_0+\sum_{l=1}^n\gamma_l x^l) \partial_{j+i\times d}f(\bfx)$) that are involved in the Ninomiya Victoir scheme for~\eqref{SDE_xi} (resp.~\eqref{SDE_dcm_2}), and $X_k(t,x)$ (resp. $\mathcal{X}_k(t,\bfx)$) the associated ODE. Let $\bfx =(x^1,\dots,x^n) \in (\R^d)^n$ and $x =x_0+\sum_{i=1}^d \gamma_i x^i$. We have for $i\in\{0,\dots,n-1\}$, $j\in \{1,\dots,d\}$,
$$\left(\frac{d}{dt}A_{\bfx}(X_k(t,x))\right)_{j+i\times d}=\frac 1 {G(0)}\left(\frac{d}{dt}X_k(t,x)\right)_{j}=\sigma_{j,k}(X_k(t,x))=\sigma_{j,k}(x_0+\sum_{l=1}^n A^l_{\bfx}(X_k(t,x)) ),$$ which gives
$$ \mathcal{X}_k(t,\bfx) = A_{\bfx}(X_k(t,x)), \text{ with } x =x_0+\sum_{i=1}^d \gamma_i x^i.$$
We also check easily that $\mathcal{X}_1(t_1,\mathcal{X}_2(t_2,\bfx))=A_{\bfx}(X_1(t_1,X_2(t_2,x)))$ for $t_1,t_2 \in \R$, which concludes the proof.
\end{proof}
\begin{proof}[Proof of Proposition~\ref{prop_multifactorCIR}]
When $x\ge \bar{\mathbf{K}}_2(t)$, $\varphi(x,t,U)$ is the Ninomiya-Victoir scheme for the SDE~\eqref{SDE_CIR}, where the standard normal variable has been replaced by $w(U)$ that matches the five first moments ($\E[w(U)^2]=1$, $\E[w(U)^4]=3$, $\E[w(U)^{2m+1}]=0$).  Then, by Lemma~\ref{lem_NV}, $\psi(\bfx,t,U)$ is the Ninomiya-Victoir scheme for the SDE~\eqref{SDE_CIR_n}, where the standard normal variable has been replaced by $w(U)$. It follows by~\cite[Corollary 2.3.20]{AA_book} that~\eqref{pot_2} holds.   

We now consider $x \in [0, \bar{\mathbf{K}}_2(t))$. A Taylor expansion gives for $f\in \Cpol{\R^n}$, $\bfx \in \R^n$ and $y\in \R$:
\begin{align*}
  f(A_{\bfx}(y))=& f(\bfx)+\frac{y-x}{G(0)}\sum_{i=1}^n\partial_i f(\bfx)+ \frac 12 \left(\frac{y-x}{G(0)}\right)^2\sum_{i_1,i_2=1}^n\partial_{i_1}\partial_{i_2} f(\bfx) \\&+\left(\frac{y-x}{G(0)}\right)^3\int_0^1\frac{(1-\alpha)^2}2 \sum_{i_1,i_2,i_3=1}^n\partial_{i_1}\partial_{i_2}\partial_{i_3}f\left(\bfx +\alpha\frac{y-x}{G(0)}\right)  d\alpha
\end{align*}
Let $C>0,E\ge 1$ be such that $\sum_{i_1,i_2,i_3=1}^n|\partial_{i_1}\partial_{i_2}\partial_{i_3}f(\bfx)|\le C(1+|\bfx|^E)$. 
Since $\E[\varphi(x,t,U)]=\bar{u}_1(t,x)=\E[\xi^x_t]$ and $\E[\varphi(x,t,U)^2]=\bar{u}_2(t,x)=\E[(\xi^x_t)^2]$, we get 
\begin{align*}
  |\E[f(A_{\bfx}(\varphi( x,t,U)))]-\E[f(A_{\bfx}(\xi^x_t))]| \le &\frac{C}{6}(1+2^{E-1}|\bfx|)\left(\E\left[\frac{|\varphi( x,t,U)-x|^3}{G(0)^3}\right]+\E\left[\frac{|\xi^x_t-x|^3}{G(0)^3}\right] \right)\\
  &+\frac{C}{6}2^{E-1}\left( \E\left[\frac{|\varphi( x,t,U)-x|^{3+E}}{G(0)^{3+E}}\right]+\E\left[\frac{|\xi^x_t-x|^{3+E}}{G(0)^{3+E}}\right]\right).
\end{align*}
Since $\bar{\mathbf{K}}_2(t)=O(t)$, we get that $\E[|\xi^x_t-x|^q]+ \E[|\varphi(x,t,U)-x|^q]\le C_q t^q$ (see the proof of~\cite[Proposition 3.3.5]{AA_book} for details), and therefore~\eqref{pot_2} holds by using~\cite[Remark 2.3.7]{AA_book}.  

Then,  the second part of the claim is just then an application of the splitting technique~\cite[Corollary 2.3.14]{AA_book}.
\end{proof}

\bibliographystyle{abbrv}
\bibliography{biblio_CMP}

\end{document}